\documentclass[10pt, A4, leqno, showkeys]{amsart} 
\date{\today}
\usepackage[active]{srcltx}
\usepackage[]{graphics}
\usepackage{here}
\usepackage{mathtools}
\usepackage[mathscr]{eucal}
\usepackage{mathrsfs}
\usepackage{amscd}
\usepackage{amsfonts}
\usepackage{amsmath,amsthm,amssymb}
\usepackage{latexsym}
\usepackage{comment}
\usepackage{xcolor}
\usepackage[colorlinks=false,linkbordercolor=green,citebordercolor=red]{hyperref}
\usepackage{setspace}
\usepackage{enumitem}

\usepackage{caption} 
\DeclareCaptionType{listcase}[List]

\makeatletter
\AtBeginEnvironment{proof}{%
  \@afterindenttrue            
  \setlength{\parindent}{1em}  
  \setlength{\listparindent}{1em}
}
\makeatother

\numberwithin{equation}{section}
\usepackage{bm}

\usepackage{amsthm}
\theoremstyle{plain}
\newtheorem{theorem}{Theorem}[section]
\newtheorem{proposition}[theorem]{Proposition}
\newtheorem{lemma}[theorem]{Lemma}
\newtheorem{corollary}[theorem]{Corollary}
\newtheorem{fact}[theorem]{Fact}

\newtheorem{problem}[theorem]{Problem}
\newtheorem{question}[theorem]{Question}

\theoremstyle{definition}
\newtheorem{definition}[theorem]{Definition}
\newtheorem{example}[theorem]{Example}
\newtheorem{remark}[theorem]{Remark}
\newtheorem*{acknowledgements}{Acknowledgements}

\newcommand{\R}{\boldsymbol{R}}

\newcommand{\Z}{\boldsymbol{Z}}
\newcommand{\C}{\boldsymbol{C}}
\newcommand{\rt}{\sqrt}
\newcommand{\fr}{\frac}

\newcommand\Res{\operatorname{Res}}
\renewcommand\Re{\operatorname{Re}}
\renewcommand\Im{\operatorname{Im}}
\renewcommand\bar{\overline}
\renewcommand\phi{\varphi}
\renewcommand\Sigma{\varSigma}
\renewcommand\Phi{\varPhi}
\newcommand\ord{\operatorname{ord}}

\renewcommand\tilde{\widetilde}
\renewcommand\epsilon{\varepsilon}
\newcommand{\BSigma}{\bbar{\Sigma}}

\newcommand{\bbar}{\overline}

\newcommand{\al}{\alpha}
\newcommand{\bt}{\beta}
\newcommand{\gm}{\gamma}

\newcommand{\Chat}{\bar \C}

\newcommand{\ti}{\tilde}
\newcommand{\nf}{\normalfont}

\newcommand{\tr}[1]{\textcolor{red}{#1}}

\allowdisplaybreaks

\title[Totally ramified value number greater than $2$]
{
Complete minimal surfaces of finite total curvature on punctured spheres
with totally ramified value number greater than $2$ 
}

\author{
Jun Matsumoto
}

\date{\today}

\keywords{Minimal surface, Gauss map, Omitted values, Total weight of totally ramified values}

\subjclass[2020]{Primary 53A10; Secondary 53A05}

\address{
Department of Mathematics, \endgraf
Institute of Science Tokyo, \endgraf
O-okayama, Meguro, Tokyo, 152-8551, Japan
}
\email{matsumoto.j.d273@m.isct.ac.jp, j.matsumoto.517@gmail.com}

\begin{document}

\maketitle

\begin{abstract}
  Motivated by Osserman's problem on the number $D_g$ of omitted values 
  of the Gauss map of a complete minimal surface with finite total curvature in $\R^3$, 
  its totally ramified value number $\nu_g$ (referred to in this paper as the 
  \emph{total weight of totally ramified values})
  has attracted significant interest.
  The value of $\nu_g$ provides more detailed information than the number of omitted values alone.
  In 2006, Kawakami first found that a minimal surface defined on the three-punctured Riemann sphere, 
  originally constructed by Miyaoka and Sato, satisfies $D_g = 2$ and $\nu_g = 2.5 > 2$.
  Subsequently, in 2024, Kawakami and Watanabe gave another minimal surface 
  defined on the four-punctured Riemann sphere that also satisfies $D_g = 2$ and $\nu_g = 2.5$.
  To date, these remain the only two known examples of such surfaces satisfying $\nu_g > 2$.
  
  In this paper, we provide a systematic construction of meromorphic functions 
  on punctured Riemann spheres that satisfy $\nu_g > 2$.
  As a consequence, we obtain the following results for complete minimal surfaces
of finite total curvature with $\nu_g = 2.5$ 
within the topological types of the known examples:
(1) For the three-punctured sphere, we prove the uniqueness of Miyaoka--Sato's example.
(2) For the four-punctured sphere, we completely determine the surfaces with $D_g=2$ and $\nu_g = 2.5$,
which include examples other than Kawakami--Watanabe's one.
(3) Furthermore, we construct a new example on the four-punctured sphere
satisfying $D_g=1$ and $\nu_g = 2.5$.
\end{abstract}

\section{Introduction} 
  The study of the number of omitted values of the Gauss map of a complete minimal surface in Euclidean $3$-space 
  $\R^3$
  has been a central theme 
  in classical minimal surface theory.
  For this study, Fujimoto \cite{Fuj88} obtained a critical result
  by using value distribution theoretic methods:
  the Gauss map of a non-flat complete minimal surface
  in $\R^3$ can omit at most $4$ values from the Riemann sphere $\Chat \coloneqq \C \cup \{\infty\}$. 
  Moreover, this result is optimal since the Gauss map of the classical 
  Scherk surface omits exactly $4$ values in $\Chat$.
  Also, Osserman \cite{Oss64} proved that, 
  if a non-flat complete minimal surface in $\R^3$ is of finite total curvature, 
  then its Gauss map can omit at most $3$ values in $\Chat$ (see Fact \ref{fact:Osserman}).
  However, since it is still unknown whether there exist such minimal surfaces 
  whose Gauss map omits $3$ values,
  the optimality of Osserman's results remains open.
  
  In this context, the totally ramified value number
  of the Gauss map has attracted attention and been studied in the early 2000s. 
  Here, let us recall the notion of the totally ramified value number of meromorphic functions.
  
  \begin{definition}[cf. \cite{Nev70}, \cite{Kaw06}] \label{def:TRV}
  Let $g : \Sigma \to \Chat \coloneqq \C \cup \{\infty\}$ be a 
  non-constant meromorphic function on a Riemann surface $\Sigma$.
 \begin{itemize}[leftmargin = 5mm ]
	\item
	  A value in $\Chat \setminus g(\Sigma)$ is called an \emph{omitted value} of $g$.
	  We denote by $D_g$ the number of omitted values of $g$ in this paper.
	\item
	  Let $p \in \Sigma$.
	  Take local coordinates $z$ and $w$ centered at $p$ and $g(p)$ respectively, 
	  such that the function $g$ is locally expressed as $w = z^{e_p}$.
	  Then, we call the integer $e_p \in \Z_{\geq 1}$ the \emph{multiplicity} at $p$, and $e_p - 1$ the 
	  \emph{branching order} at $p$ of $g$, respectively.
	  Moreover, if $e_p \geq 2$, then the point $p$ is called  a \emph{branch point}.
	\item
	  A value $\al \in \Chat$ is called a \emph{totally ramified value} of $g$ 
	  if either it is an omitted value of $g$, or 
	  all the points in the inverse image $g^{-1}(\al)$ are branch points of $g$;
	  in other words, all solutions of the equation $g - \al = 0$ are multiple.
	\item
	  Let $\mathcal{TR}_g \coloneqq \{a_1, \dots, a_{D_g}, b_1, \dots, b_{R_g}\} \subset \Chat$ 
	  be the set of totally ramified values of $g$, where $a_k \ (k = 1, \dots, D_g)$ and  $b_j\ (j = 1, \dots, R_g)$
	  denote the omitted values and the totally ramified values other than the omitted values, respectively.
	  We also define a map $\nu : \mathcal{TR}_g \to \Z_{\geq 2} \cup \{\infty\}$ by
	  \begin{equation} \label{eq:order_TR}
	    \nu(\al)
	    \coloneqq
	    \left\{
	    \begin{aligned}
	      &\infty \qquad &(\al = a_k), \\
	      &\min \{e_p \in \Z_{\geq 2} ; p \in g^{-1}(b_j) \} \ (\geq 2) \qquad &(\al = b_j),
	    \end{aligned}
	    \right.
	  \end{equation}
	  and call the value $\nu(\al)$ the \emph{order} of a totally ramified value at $\al$.
	  We call $\nu_g$ defined by 
	  \begin{equation}
	    \nu_g \coloneqq \sum_{\al \in \mathcal{TR}_g}\left(1-\fr{1}{\nu(\al)}\right) 
	    = D_g + \sum_{j = 1}^{R_g}\left(1-\fr{1}{\nu(b_j)}\right) \in \boldsymbol Q_{\geq 0}
 	 \end{equation}
 	 the \emph{totally ramified value number}
 	 or \emph{total weight of totally ramified values} of $g$.
  \end{itemize}

  \end{definition}
  \begin{remark}
  Let us make a few remarks for notation and terms in this paper.
  \begin{itemize}[leftmargin = *]
    \item
        Hereafter, following Robinson \cite{Rob39}, and Kawakami and Watanabe \cite{KW24}, 
        we refer to the totally ramified value number as the ``total weight of totally ramified values''. 
        This is because the value $1 - 1 / \nu(\al)$ is called the weight 
        of a meromorphic function $g$  at $\al \in \mathcal{TR}_g$, and because
        we avoid confusing ``totally ramified value number'' with ``the number of totally ramified values''.
        We denote the number of totally ramified values other than omitted values by $R_g$.  
    \item
      The term ``exceptional value'' is also frequently used instead of the term ``omitted value''
      in the same sense.
      Regarding omitted values as totally ramified values is natural in Nevanlinna theory
      (see \cite{Rob39}, \cite{Nev70}, \cite{Fuj93}, 
      \cite{Kob03}, \cite{NW14}, \cite{Ru21}, \cite{Ru23}).
      Indeed, if $\alpha$ is an omitted value of $g$, 
      then the equation $g = \alpha$ has no solutions of any order, 
      which can be interpreted as having infinite multiplicity.
    \item
      The total weight of totally ramified values of a meromorphic function
      can be naturally interpreted from the viewpoint of Nevanlinna theory.
      In fact, the deficiency relation for the Nevanlinna defects (\cite{Nev25}) 
      implies
      $$
      	 D_g \leq \nu_g \leq 2
      $$
      for any non-constant meromorphic function $g : \C \to \Chat$.
   \end{itemize}
  \end{remark}
   
   Fujimoto implicitly showed that the total weight $\nu_g$ of totally ramified values 
   of the Gauss map $g$ 
   of a complete minimal surface provides more detailed information
   than the number $D_g$.
  Namely, \cite{Fuj92} 
  gives 
  \begin{equation} \label{eq:Fujimoto_case}
    D_g \leq \nu_g \leq 4
  \end{equation}
  for complete minimal surfaces (not necessarily of finite 
  total curvature), which is the best possible bound. 
  On the other hand, there are no known examples of complete minimal surfaces of 
  finite total curvature that satisfy $D_g = 3$, 
  while many examples with $D_ g = 2$ are obtained by Miyaoka and Sato \cite{MS94}
  in almost all topological types.
  Hence, as in the case of Fujimoto's work,  
  it was widely believed that the best upper bound for Osserman's problem is $2$, 
  and that of the total weight of totally ramified values of the Gauss map is also $2$.
 
 However, in 2006, Kawakami \cite{Kaw06} showed that a complete minimal surface 
  $\Chat \setminus \{\text{$3$ points}\} \to \R^3$ of total curvature $-8 \pi$ 
  given by Miyaoka and Sato satisfies $D_g = 2$ and $\nu_g = 2.5$, strictly greater than $2$.
  This finding gives a motivation for studying the total weight of totally ramified values of the Gauss map.
  By refining Osserman's arguments, Kawakami, Kobayashi and Miyaoka \cite{KKM08} showed that,
  assuming finite total curvature,  
  \begin{equation}
  	D_g \leq \nu_g < 4,
  \end{equation}
  which is different from Fujimoto's case \eqref{eq:Fujimoto_case}
  (see Fact \ref{KKM_TW_estimate} for a precise statement).
  Moreover, Kawakami and Watanabe \cite{KW24} found the second family 
  of complete minimal surfaces $\Chat \setminus \{\text{$4$ points}\} \to \R^3$ 
  of total curvature $-16 \pi$, which satisfies $D_g = 2$ and $\nu_g = 2.5$.
  The value $\nu_g = 2.5$ is best possible for the topological types of Miyaoka--Sato's  
  and Kawakami--Watanabe's examples.
  However, over the course of twenty years, 
  only these two examples with $\nu_g = 2.5$
   ---and thus strictly greater than $2$--- have been known, 
  and their uniqueness remains unknown.
  
  The purpose of this paper is to answer the following question.
  \begin{question} \label{Question}
    \begin{enumerate}[label = (\arabic*), leftmargin = *, font = \nf]
    	\item \label{Question_1}
    	  Are there other examples of  complete minimal immersions 
    	  $\Chat \setminus \{\text{$3$ points}\} \to \R^3$ of total curvature $-8\pi$
    	  that satisfy $\nu_g = 2.5$?
    	\item \label{Question_2}
    	    Are there other examples of complete minimal immersions 
    	    $\Chat \setminus \{\text{$4$ points}\} \to \R^3$  of total curvature $-16 \pi$
    	    that satisfy $\nu_g = 2.5$?
    \end{enumerate}
  \end{question}
  \noindent
   The answer to \ref{Question_1} is NO, namely, 
   we can prove the uniqueness of Miyaoka--Sato's case.
   However,  for \ref{Question_2}, the answer is YES, that is, 
   there are some examples  other than Kawakami--Watanabe's one.
   In particular, we can find a family containing their examples.
   This paper is also devoted to providing a method for constructing 
   complete minimal surfaces with finite total curvature satisfying $\nu_g > 2$.
   
   The paper is organized as follows. 
   In Section \ref{sec:preliminaries} we give a brief review of the fundamental theory of 
   classical minimal surface theory in $\R^3$, and describe Osserman's problem
   (Problem \ref{pro:Osserman}).
   Especially, we explain Miyaoka--Sato's and Kawakami--Watanabe's examples 
   (Examples \ref{ex:MS} and \ref{ex:KW})
   that served as the motivation for the Question \ref{Question}.
   In Section \ref{sec:estimate}, we review value distribution for meromorphic functions
   defined on punctured compact Riemann surfaces.
   By using an elementary estimate for the sum of orders \eqref{eq:order_TR} of totally ramified values
   (Proposition \ref{thm:estimate_order}),
   we obtain an estimate for $\nu_g$ 
   (Corollaries \ref{thm:nug_estimate} and \ref{cor:surj_25}). 
   In particular, Corollary \ref{cor:surj_25} immediately gives the 
   answer to \ref{Question_1} in Question \ref{Question}
   (Corollary \ref{thm:answer_MS}).
   In Section \ref{sec:KW_determining}, we completely determine 
   all complete minimal immersions
   $\Chat \setminus \{\text{$4$ points}\} \to \R^3$ of total curvature 
   $-16 \pi$ with $D_g = 2$ and $\nu_g = 2.5$,
   which corresponds to Kawakami--Watanabe's case.
   By giving a systematic construction of meromorphic functions on
   $\Chat \setminus \{\text{$4$ points}\}$ with $D_g = 2$ and 
   $\nu_g = 2.5$, we know that there are examples of such surfaces other 
   than Kawakami--Watanabe's  one (Theorem \ref{thm:TC16Pi_25}).
   We also find that there is a complete minimal immersion
   $\Chat \setminus \{\text{$4$ points}\} \to \R^3$ of total curvature $-16 \pi$ 
   with $D_g = 1$ and $\nu_g = 2.5$.
   Finally, we propose some open problems for the total weight of totally ramified values of 
   the Gauss map of a complete minimal surface in $\R^3$ 
   of finite total curvature (Problem \ref{pro:open_TRV}).
   
   \begin{acknowledgements}
     The author would like to sincerely express his gratitude to 
      Yu Kawakami and Kotaro Yamada
     for their helpful advice and comments on this research.
     The author also thanks Mototsugu Watanabe for sharing valuable ideas for this work.
     This work was supported by JST SPRING, Japan Grant Number JPMJSP2180.
   \end{acknowledgements}

\section{Preliminaries} \label{sec:preliminaries}

\subsection{
 Complete minimal immersions of finite total curvature into 
 \texorpdfstring{$\R^3$}{R3} and their Gauss maps
}
 
 Let us briefly review the fundamental concepts of minimal surface theory 
 (cf. \cite{Osserman}, \cite{LM99} and \cite{AFL21}).
 Let $\Sigma$ be a connected and orientable $2$-manifold, and $f: \Sigma \to \R^3$ an immersion.
 We denote by $ds^2$ the induced metric on $\Sigma$ by $f$ from $\R^3$.
 One can consider $\Sigma$ as a Riemann surface whose complex coordinate is isothermal regarding the induced 
 metric $ds^2$.
 Thus, in the sequel, we may suppose that $\Sigma$ is a Riemann surface and $f : \Sigma \to \R^3$ 
 is a conformal immersion.
 The conformal immersion $f$ is said to be a \emph{minimal immersion}
 if 
 $$
 \Delta_{ds^2} f \equiv 0,
 $$
 where $\Delta_{ds^2}$ denotes the Laplacian with respect to $ds^2$, 
 i.e., each component of $f$ is a harmonic function on $\Sigma$.
 As a classical result in minimal surface theory, the following Weierstrass representation is well known.
 
 \begin{fact}
   Let $\Sigma$ be a Riemann surface, and let a pair $(g, \omega)$ of a meromorphic function $g$ and
   a holomorphic $1$-form $\omega$ on $\Sigma$ satisfy
    \begin{enumerate}[label = (\arabic*), leftmargin = *, font = \normalfont]
      \item
      \emph{(regularity condition)}
      \begin{equation} \label{eq:1stff}
        ds^2 \coloneqq (1 + |g|^2)^2 |\omega|^2
       \end{equation}
       defines a Riemannian metric on $\Sigma$.
      \item
      \emph{(period condition)}
      \begin{equation}\label{eq:PC}
        \Re \int_c (1 - g^2, i(1 + g^2), 2g) \omega = 0
	\end{equation}
	holds for any closed curve $c$ in $\Sigma$.
    \end{enumerate}
    Then, a map $f : \Sigma \to \R^3$ defined by
    \begin{equation} \label{eq:W_rep}
      f \coloneqq \Re \int (1 - g^2, i(1 + g^2), 2g) \omega 
    \end{equation}
   gives a minimal immersion whose induced metric coincides with $ds^2$
   in \eqref{eq:1stff}.
   Conversely, any minimal immersion $f : \Sigma \to \R^3$ has an 
   expression \eqref{eq:W_rep} in which 
   the complex coordinate on $\Sigma$ is isothermal with respect to its induced metric.
 \end{fact}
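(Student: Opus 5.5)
The plan is the classical local computation behind the Weierstrass representation, done in two directions.

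For the forward direction, set $\Phi = (\phi_1,\phi_2,\phi_3) \coloneqq (1-g^2,\, i(1+g^2),\, 2g)\,\omega$.
\begin{enumerate}[label = (\roman*), leftmargin = *, font = \normalfont]
\item \emph{Well-definedness.} Fix a base point and define $f(p)=\Re\int_{p_0}^p \Phi$. The period condition \eqref{eq:PC} makes this independent of the path. The regularity condition \eqref{eq:1stff} forces $\omega$ to vanish exactly at the poles of $g$, to order twice the pole order. Hence each $\phi_k$ is a holomorphic $1$-form, and locally $f$ is the real part of a holomorphic map. In particular each component of $f$ is harmonic.
\item \emph{Conformality.} In a local coordinate $z$ write $\phi_k = \hat\phi_k\,dz$. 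Then $f_z = \frac12(\hat\phi_1,\hat\phi_2,\hat\phi_3)$, and one checks the identity $\hat\phi_1^2+\hat\phi_2^2+\hat\phi_3^2 = 0$, since $(1-g^2)^2-(1+g^2)^2+4g^2=0$. So $\langle f_z,f_z\rangle=0$, i.e.\ $f$ is conformal.
\item \emph{Metric.} The induced metric is $2|f_z|^2|dz|^2=\frac12\sum_k|\hat\phi_k|^2|dz|^2$. A short computation gives $|1-g^2|^2+|1+g^2|^2+4|g|^2 = 2(1+|g|^2)^2$, so the metric equals $(1+|g|^2)^2|\omega|^2 = ds^2$.
\item \emph{Immersion and minimality.} Positivity of $ds^2$ gives that $f$ is an immersion. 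For a conformal immersion, $\Delta_{ds^2}$ is a positive multiple of the flat Laplacian $4\partial_z\partial_{\bar z}$, and harmonicity of the components gives $\Delta_{ds^2}f\equiv0$.
\end{enumerate}

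For the converse, start from a conformal minimal immersion $f$ and set $\phi_k\coloneqq 2(f_k)_z\,dz$.
\begin{enumerate}[label = (\roman*), leftmargin = *, font = \normalfont]
\item Minimality means $(f_k)_{z\bar z}=0$, so each $\phi_k$ is holomorphic. Conformality gives $\sum\phi_k^2=0$, and since $f$ is an immersion, $\sum|\phi_k|^2>0$.
\item Define $\omega\coloneqq\frac12(\phi_1-i\phi_2)$ and $g\coloneqq \phi_3/(\phi_1-i\phi_2)$. Using $(\phi_1-i\phi_2)(\phi_1+i\phi_2)=-\phi_3^2$, one recovers $\phi_1=(1-g^2)\omega$, $\phi_2=i(1+g^2)\omega$ and $\phi_3=2g\omega$.
\item The period condition \eqref{eq:PC} holds automatically because $f$ is single-valued, and $f=\Re\int\Phi$ up to a translation.
\item The metric identity from the forward direction, together with positivity of $\sum|\phi_k|^2$, gives the regularity condition \eqref{eq:1stff}.
\end{enumerate}

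The main obstacle is the degenerate case $\phi_1-i\phi_2\equiv0$. Then $\phi_3\equiv0$ as well, so $f_3$ is constant and the surface lies in a horizontal plane. In that case one rotates the coordinates of $\R^3$ first, after which the definitions of $g$ and $\omega$ go through. A second point needing care is checking that $g$ is meromorphic: it is a quotient of holomorphic forms, hence meromorphic away from the isolated zeros of the denominator.
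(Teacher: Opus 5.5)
Your argument is correct and is the standard classical derivation: $\sum_k\phi_k^2=0$ gives conformality, the identity $|1-g^2|^2+|1+g^2|^2+4|g|^2=2(1+|g|^2)^2$ gives the metric, and $\omega=\frac12(\phi_1-i\phi_2)$, $g=\phi_3/(\phi_1-i\phi_2)$ give the converse; the paper states this Fact without proof and defers to exactly these references. The one fine point is the case you flagged: when $\phi_1-i\phi_2\equiv0$ the surface is a horizontal plane with $g\equiv\infty$, so rotating only gives the expression for a rotated copy of $f$, whereas using the conjugate complex structure (still isothermal) gives it for $f$ itself, since then $\phi_1-i\phi_2$ becomes $2\overline{\phi_1}\neq0$ and $g\equiv0$.
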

 \noindent
 The pair $(g, \omega)$ of the meromorphic function $g$ and the holomorphic $1$-form $\omega$ 
 in the above fact is called the \emph{Weierstrass data} of a minimal immersion.
 
 The Gaussian curvature $K_{ds^2}$ of a minimal immersion $f : \Sigma \to \R^3$ is given by
 \begin{equation}
   K_{ds^2} = \fr{-4}{(1 + |g|^2)^4} \left|\fr{dg}{\omega}\right|^2.
  \end{equation}
 Additionally, the map $\mathcal G :\Sigma \to S^2$ defined by
 \begin{equation}\label{eq:C_Gauss_map}
   \mathcal G \coloneqq 
     \left(
       \fr{2\Re(g)}{1 + |g|^2}, \fr{2\Im(g)}{1 + |g|^2}, \fr{-1 + |g|^2}{1 + |g|^2} 
     \right)
     = \Pi^{-1} \circ g
 \end{equation}
 gives the (\emph{classical}) \emph{Gauss map} of $f$,
 where $\Pi : S^2 \to \Chat $ is the stereographic projection of the 
 $2$-dimensional unit sphere $S^2$ centered at the origin.
 Through the above identification between $\mathcal G$ and $g$, 
 the meromorphic function $g : \Sigma \to \Chat$ is also called the (\emph{complex}) \emph{Gauss map}.
 Moreover, the total curvature $C(\Sigma)$ of $f$ is given by
 \begin{equation}
   C(\Sigma) \coloneqq
   \int_\Sigma K_{ds^2} dA_{ds^2} 
   = - \int_\Sigma \fr{2i dg \wedge d \bar g}{(1 + |g|^2)^2} 
   = - \int_\Sigma g^\ast (dA_{S^2})
   \in \lbrack - \infty, 0 \rbrack,
  \end{equation} 
  where $dA_{ds^2}$ denotes the area element with respect to $ds^2$, 
  $$
    dA_{S^2} \coloneqq \fr{2i dz \wedge d \bar z}{(1 + |z|^2)^2}
  $$
  is the area element regarding the Fubini--Study metric on $S^2 = \Chat$, 
  and $z$ is the standard coordinate of $\C \subset \Chat$. 
  We say that a minimal immersion is of \emph{finite total curvature} if 
  the total curvature satisfies $C(\Sigma) > - \infty$.
  Moreover, we say that a minimal immersion is \emph{complete}
  if the induced metric is a complete Riemannian metric on $\Sigma$.

  Let us next review fundamental facts about complete minimal surfaces of finite total 
  curvature.
  
  \begin{fact}[{\cite{Hub57}, \cite{Oss64}}]
    Let $\Sigma$ be a Riemann surface, and $f : \Sigma \to \R^3$ a minimal immersion.
    Then, the following two conditions are equivalent:
    \begin{enumerate}[label = (\arabic*), leftmargin = *, font = \normalfont]
      \item
        The minimal immersion $f : \Sigma \to \R^3$ is complete and of finite total curvature.
      \item
        The source Riemann surface $\Sigma$ is biholomorphic to 
        $\BSigma \setminus \{p_1, \dots, p_n\}$, where $\BSigma$ is a compact Riemann surface and 
        $n \geq 1$ is an integer. 
        Moreover, the $\C^3$-valued holomorphic $1$-form $(1 - g^2, i(1 + g^2), 2g) \omega$ 
        {\nf(}also $(g, \omega)${\nf)} on $\Sigma$ 
        extends meromorphically to $\BSigma$. 
        
    \end{enumerate}
    
        In particular, if $f : \Sigma \to \R^3$ is a complete minimal immersion of finite total curvature, 
        then the Gauss map $g : \Sigma \to \Chat$ of $f$ extends to a holomorphic map $g : \BSigma \to \Chat$, 
        and the total curvature is given by $C(\Sigma) = -4\pi \deg(g)$, 
        where $\deg(g)$ denotes the degree of $g : \BSigma \to \Chat$. 
  \end{fact}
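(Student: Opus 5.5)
The statement is the Huber--Osserman characterization. I will prove the two directions separately, using the Weierstrass representation and the fact that $ds^2=(1+|g|^2)^2|\omega|^2$ has curvature $K_{ds^2}=-4|dg/\omega|^2/(1+|g|^2)^4$.

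\emph{(1)$\Rightarrow$(2).} Since $K_{ds^2}\le 0$ and $\int_\Sigma |K_{ds^2}|\,dA_{ds^2}<\infty$ for the complete metric $ds^2$, I will invoke Huber's theorem \cite{Hub57}. It says that a complete surface whose negative part of the curvature is integrable is finitely connected, and that each end is conformally a punctured disk. Hence $\Sigma\cong\BSigma\setminus\{p_1,\dots,p_n\}$ with $\BSigma$ compact. The case $n=0$ is excluded because there are no compact minimal surfaces in $\R^3$.

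Next I show that $g$ extends meromorphically to each $p_j$. Take a punctured disk $\Delta^*$ around $p_j$. The total curvature is $-\int g^\ast dA_{S^2}$, which is the spherical area of the image counted with multiplicity, and it is finite. If $p_j$ were an essential singularity, the big Picard theorem would make $g$ cover $\Chat$ minus at most two points infinitely often on every smaller punctured disk. The area would then be infinite, a contradiction, so $g$ extends.

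Then I treat $\omega$. I write the three components of $(1-g^2,i(1+g^2),2g)\omega$ as $\phi_k$, so that $|\phi_1|^2+|\phi_2|^2+|\phi_3|^2=\tfrac12(1+|g|^2)^2|\omega|^2$. After a rotation of $\R^3$ I may assume $g(p_j)\neq\infty$, so that $g$ is bounded near $p_j$. It then suffices to show that $\omega=h(z)\,dz$ has at worst a pole at $z=0$. This is the step I expect to be the main obstacle, and it is Osserman's argument \cite{Oss64}. Suppose $h$ had an essential singularity. Since $h$ has no zeros near $0$ (wherever $g$ is finite, zeros of $h$ would violate regularity), I can use a branch of $\log h$, or an argument-principle refinement if the winding is nonzero. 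I would then construct a divergent path $\gamma$ to the puncture along which $\int_\gamma |h|\,|dz|<\infty$, by following a suitable level or steepest-descent curve of $|h|$. That contradicts completeness, so $h$ is meromorphic at $0$. As a result, $(g,\omega)$ and therefore the $\phi_k$ extend meromorphically. The formula $C(\Sigma)=-4\pi\deg g$ then follows because $g^\ast dA_{S^2}$ integrates to $\deg g$ times the area $4\pi$ of $S^2$.

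\emph{(2)$\Rightarrow$(1).} If $g$ and $\omega$ extend meromorphically, then $g$ has finite degree $d$ on $\BSigma$, so $C(\Sigma)=-4\pi d>-\infty$. For completeness, the extended form $\Phi=(\phi_k)$ must have a pole at each $p_j$. The reason is that near $p_j$ the metric is comparable to $|z|^{-2m}|dz|^2$ with $m\ge1$, and this gives infinite length to every divergent path. I expect the intended reading of (2) to include this; otherwise one must note that an end where $\Phi$ stays holomorphic would be at finite distance and could not arise from a complete surface. Under that reading, every divergent path has infinite length, so $ds^2$ is complete and (1) follows.
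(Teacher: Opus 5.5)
The paper does not prove this Fact; it quotes it from Huber and Osserman. Your outline is the classical argument from those sources: Huber's theorem for the conformal type, big Picard with the area formula for $g$, Osserman's lemma for $\omega$ after rotating so that $g(p_j)\neq\infty$, and $4\pi\deg g$ for the total curvature. The steps you actually carry out are sound. Your caveat on (2)$\Rightarrow$(1) is also correct: read literally, that direction is false. For example, the catenoid data $g=z$, $\omega=dz/z^2$ restricted to $\Chat\setminus\{0,1,\infty\}$ satisfy (2) but are incomplete at $z=1$, so (2) must require a pole of the form at every $p_j$. (Minor slip: $\sum_k|\phi_k|^2=2(1+|g|^2)^2|\omega|^2$, not $\tfrac12(1+|g|^2)^2|\omega|^2$.)

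The gap is in the one step with real content, the meromorphy of $\omega$. You need the following: if $h$ is holomorphic and zero-free on $0<|z|<\rho$ and not meromorphic at $0$, then some path to $0$ has finite $|h|\,|dz|$-length. Following level curves or steepest-descent curves of $|h|$ does not by itself give this. A descent curve may run into a zero of $h'$ or exit through $|z|=\rho$. Even if it reaches $0$, its length $\int|h|^2|h'|^{-1}\,d(\log|h|)$ is uncontrolled. Nothing in your argument makes a level curve reach $0$ at all.

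The working device is the flat metric $|h|\,|dz|=|dF|$ of a primitive $F$, with the finite-length path obtained by lifting a straight segment. Concretely:
\begin{itemize}
\item Write $h=z^m e^{\psi_++\psi_-}$, with $\psi_+$ holomorphic at $0$ and $\psi_-$ the principal part. Then $E(\zeta):=\psi_-(1/\zeta)$ is entire, and $h$ is meromorphic at $0$ if and only if $E\equiv0$.
\item Since $e^{\Re\psi_+}$ is bounded near $0$, it suffices to find a path $\zeta\to\infty$ of finite $|d\Psi|$-length, where $\Psi(\zeta)=\int_0^\zeta s^N e^{E(s)}\,ds$ with $N\ge\max(0,-m-2)$.
\item $\Psi$ has no critical points except possibly $0$. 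Suppose local inverses of $\Psi$ could be continued along every segment avoiding $\Psi(0)$. Then $\Psi$ would be a covering over $\C\setminus\{\Psi(0)\}$, so $\C\setminus\Psi^{-1}(\Psi(0))$ would have cyclic fundamental group. That forces the fiber to be $\{0\}$ and the degree to be finite, so $\Psi$ would be a polynomial and $E$ constant.
\item Hence, if $E\not\equiv0$, some segment of length $L$ has a lift that cannot be continued. That lift tends to $\infty$ and has $|d\Psi|$-length $L$. Taking $z=1/\zeta$ along its tail gives a divergent path of finite length, contradicting completeness.
\end{itemize}
Alternatively, cite Osserman's lemma outright rather than presenting the descent-curve construction as the proof.
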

  
  \subsection{
    Osserman's problem and its relationship with the total weight of totally ramified values of the Gauss map 
  }
  
  Let $f : \Sigma = \BSigma_\gm \setminus \{p_1, \dots, p_n\} \to \R^3$
  be a complete minimal immersion of finite total curvature,
  where $\BSigma_\gm$ stands for a compact Riemann surface of genus $\gm$.
   Osserman proved the following results 
  and proposed a problem about the number $D_g$ of omitted values
  of the Gauss map $g : \Sigma \to \Chat$.
  
  \begin{fact}[{\cite[Theorem 3.3]{Oss64}}] \label{fact:Osserman}
  The Gauss map of a non-flat complete minimal immersion of finite total curvature in 
  $\R^3$ satisfies $D_g \leq 3$.
  \end{fact}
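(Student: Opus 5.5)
We argue by Riemann--Hurwitz, combined with the order estimate for the Weierstrass data at the punctures that completeness and finite total curvature give. By the preceding Fact of Huber and Osserman, $\Sigma = \BSigma_\gm \setminus \{p_1,\dots,p_n\}$, and $g$ extends to a holomorphic map $g:\BSigma_\gm\to\Chat$ of degree $d = \deg(g) \geq 1$. The map is non-constant because $f$ is non-flat. Suppose $g$ omits $D = D_g$ values $a_1,\dots,a_D$ on $\Sigma$. Then every point of $g^{-1}(a_k)$ lies among the punctures, so each fibre $g^{-1}(a_k)$ consists of punctures only. Counting points gives
\begin{equation*}
  D \leq \sum_{k=1}^{D} \# g^{-1}(a_k) \leq n .
\end{equation*}

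The key step is the estimate $\deg(g) \geq \gm + n - 1$. We obtain it from the degree of the meromorphic $1$-form $\omega$ on $\BSigma_\gm$. The metric $ds^2 = (1+|g|^2)^2|\omega|^2$ must be regular on $\Sigma$, so $\omega$ vanishes exactly at the poles of $g$ in $\Sigma$, to order $2\times$ the pole order. Completeness at each puncture $p_j$ forces the $\C^3$-valued form $(1-g^2, i(1+g^2), 2g)\omega$ to have a pole of order at least $2$ there; a pole of order $\leq 1$ would give a finite-length path to the end. Write $\omega g^2$ or $\omega$ according to whether $g(p_j)=\infty$ or not. Then $\deg(\text{divisor of } \omega) = 2\gm - 2$ gives
\begin{equation*}
  2\gm-2 \;\leq\; 2d - 2n ,
\end{equation*}
so $d \geq \gm + n - 1$. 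This is Osserman's inequality, and I expect this pole-order/completeness bookkeeping to be the main care point.

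Now apply Riemann--Hurwitz to $g:\BSigma_\gm\to\Chat$:
\begin{equation*}
  2\gm - 2 = -2d + \sum_{p}(e_p-1).
\end{equation*}
Over each omitted value $a_k$, the fibre has at most $\#g^{-1}(a_k)$ points. Hence the ramification over $a_k$ is at least $d - \#g^{-1}(a_k)$. Summing over $k$,
\begin{equation*}
  2\gm - 2 \;\geq\; -2d + Dd - n ,
\end{equation*}
that is, $(D-2)d \leq 2\gm - 2 + n$. If $D \geq 4$, then $2d \leq 2\gm - 2 + n$. Combined with $d \geq \gm + n - 1$, this gives $2\gm + 2n - 2 \leq 2\gm - 2 + n$, hence $n \leq 0$, which contradicts $n \geq 1$. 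So $D_g \leq 3$.

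A slightly sharper version of this calculation would likewise give the strict form needed later. However, Fact~\ref{fact:Osserman} only requires $D_g\le 3$, so the argument above suffices.
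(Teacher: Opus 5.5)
The paper gives no proof of this Fact; it only cites Osserman. Your overall route is the standard argument behind it: Riemann--Hurwitz over the omitted values gives $(D_g-2)d\le 2\gm-2+n$, and combining this with Osserman's inequality $d\ge \gm+n-1$ yields the contradiction. Both computations are correct.

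\textbf{The gap.} The problem is the justification of the one input that carries all the weight, namely that $\Phi\coloneqq(1-g^2,i(1+g^2),2g)\omega$ has a pole of order at least $2$ at every puncture. You argue that a pole of order $\le 1$ would give a finite-length path to the end. This is false for a simple pole. There $ds$ is comparable to $|dz|/|z|$, and $\int_0^{\epsilon} dr/r=\infty$, so a simple pole is compatible with completeness. Completeness alone only excludes order $0$.

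With only order $\ge 1$, your divisor count gives $2\gm-2\le 2d-n$. Together with $2d\le 2\gm-2+n$ (which follows from $D_g\ge 4$), there is no contradiction when $D_g=4$. You would only obtain $D_g\le 4$. So the step that improves $4$ to $3$ is exactly the one whose justification fails.

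\textbf{The fix.} The missing idea is to use the period condition together with the nullity of $\Phi$.
\begin{enumerate}
\item For a small loop $c_j$ around $p_j$, the period condition gives $0=\Re\oint_{c_j}\Phi=\Re\bigl(2\pi i\,\Res_{p_j}\Phi\bigr)$. Hence $r\coloneqq\Res_{p_j}\Phi\in\R^3$.
\item Since $(1-g^2)^2-(1+g^2)^2+4g^2=0$, the components of $\Phi$ satisfy $\phi_1^2+\phi_2^2+\phi_3^2\equiv 0$.
\item If $\Phi$ had a simple pole at $p_j$, comparing coefficients of $z^{-2}$ in this identity would give $r_1^2+r_2^2+r_3^2=0$.
\item Since $r$ is real, this means $|r|^2=0$, so $r=0$. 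This contradicts the pole being simple.
\end{enumerate}
Hence the pole order is at least $2$ at each end. With this replacement, the rest of your argument goes through unchanged: the divisor count still gives $d\ge\gm+n-1$, and you still reach $n\le 0$.
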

  
  However, the optimality of Osserman's result is still unknown.
  So, he proposed the following problem in his book, which remains open.
  
  \begin{problem}[{\cite[\S 9]{Osserman}}] \label{pro:Osserman}
    Does there exist a non-flat complete minimal immersion
    of finite total curvature in $\R^3$, which satisfies $D_g = 3$?
  \end{problem}
  
  \noindent
    If the answer to Osserman's problem is NO , then the best upper bound is $2$
    since the catenoid satisfies $D_g = 2$, for instance.
    Additionally, Miyaoka and Sato \cite{MS94} showed that there are 
    complete minimal surfaces of finite total curvature with $D_g = 2$ 
    in almost all topological types.
    If there exist such immersions with $D_g = 3$, then (1) 
    $|C(\Sigma)| \geq 16 \pi$ (\cite{WX87}, \cite{Fan93}), 
   (2) $\gm \geq 1$ and $\deg(g) \geq n \geq 3$ (\cite{Oss64}) hold.
  
  On the other hand, 
  if any complete minimal immersion of finite total curvature satisfies $\nu_g < 3$,
  then the answer to Osserman's problem is negative, where 
  $\nu_g$ stands for the total weight of totally ramified values of the Gauss map. 
  As mentioned in the Introduction, in light of Fujimoto's result (\cite{Fuj89}), it
  was expected that $D_g \leq \nu_g \leq 2$.
  However, Kawakami found that the following example satisfies $\nu_g  = 2.5 > 2$.
  
  \begin{example}[{\cite[Proposition 3.1]{MS94}, \cite[Theorem 2.1]{Kaw06}}] \label{ex:MS}
  	Let $\Sigma = \Chat \setminus \{\pm i, \infty\}$, and define a pair $(g, \omega)$ 
  	of a meromorphic function $g$ and a holomorphic $1$-form $\omega$ on $\Sigma$ by
  	\begin{equation}
  		\left\{
  		\begin{array}{l}
	  		g = \sigma \dfrac{z^2 + 1 + a(t - 1)}{z^2 + t}, \\
	  		\\
	  		\omega  = \dfrac{(z^2 + t)^2}{(z + i)^2(z - i)^2} dz,
  		\end{array}
  		\right.
  	\end{equation}
  	where $a, t \in \R \setminus \{1\}$ and $\sigma \in \C \setminus \{0\}$ satisfy $(a - 1)(t -1) \neq 0$, $a \neq 0$,
  	and 
  	$$
  	\sigma^2 = \fr{t + 3}{a((t-1) a + 4)} < 0.
  	$$
  	Then, the pair $(g, \omega)$ gives a family of complete minimal immersions 
  	with total curvature $-8 \pi$.
  	
  	The Gauss map $g : \Sigma \to \Chat$ of a minimal immersion in this family has two omitted values 
  	$g(\infty) = \sigma$ and $g(\pm i) = \sigma a$, 
  	and the unique totally ramified value $g(0) = \sigma(1 + a(t - 1))/t$ of order $2$.
  	Thus, each minimal immersion in this family has the total weight of totally ramified values 
  	$\nu_g = 2 + (1 - 1/2) =  2.5$.
  	
  	\cite[Theorem 2]{MS94} states that all complete minimal immersions
  	$\Chat \setminus \{\text{$3$ points}\} \to \R^3$ with $C(\Sigma) = - 8 \pi$ and $D_g = 2$ 
  	are given by this family. 
  
  \end{example}
  
  The following fact is an estimate
  between the number of omitted values 
  and the total weight of totally ramified values of the Gauss map.
  
  \begin{fact}[{\cite[Theorem 3.3]{KKM08}, \cite[Theorem 3.9]{KW24}}] \label{KKM_TW_estimate}
     Let $f: \Sigma \coloneqq \BSigma_\gm \setminus \{p_1, \dots, p_n\} \to \R^3$ 
     be a non-flat complete minimal 
     immersion of finite total curvature, $g : \Sigma \to \Chat$ the Gauss map. 
     Then, the number $D_g$ of omitted values and the total weight $\nu_g$ of
     totally ramified values of $g$ satisfy
     \begin{equation} \label{eq:TRBN_est}
       D_g \leq \nu_g \leq 2 + \fr{2}{R} < 4
       \qquad
       \left(
       		\fr{1}{R} \coloneqq \fr{\gm - 1 + (n / 2)}{\deg (g)} = \fr{-\chi(\BSigma_\gm) + n}{2 \deg(g)}
	\right),
     \end{equation}
     where $\chi(\BSigma_\gm) = 2 - 2\gm$ is the 
     Euler characteristic of $\BSigma_\gm$.
     Furthermore, if $\gm = 0$, i.e., $\BSigma_0 = \Chat$, then it holds that
     \begin{equation}
     	D_g \leq \nu_g <3,
     \end{equation}
     and thus 
     $
     	D_g \leq 2.
     $
  \end{fact}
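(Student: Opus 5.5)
The plan is to obtain the upper bound from a Riemann--Hurwitz count for the extended map $g:\BSigma_\gm\to\Chat$, and then to obtain $2/R<2$ from the Jorge--Meeks inequality. The lower bound $D_g\le\nu_g$ is immediate from the definition, since every omitted value contributes weight $1$ and every other totally ramified value contributes a positive weight.

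\textbf{Step 1 (Riemann--Hurwitz count).} Set $d=\deg(g)$. By the fact of Huber and Osserman, $g$ extends holomorphically to $\BSigma_\gm$. Riemann--Hurwitz gives
\[
2d+2\gm-2=\sum_{p\in\BSigma_\gm}(e_p-1)\ \ge\ \sum_{\al\in\mathcal{TR}_g}\bigl(d-\#g^{-1}(\al)\bigr),
\]
where $g^{-1}(\al)$ is taken in $\BSigma_\gm$. Now bound the preimage counts.
\begin{itemize}
\item If $\al=a_k$ is omitted, then $g^{-1}(a_k)\subset\{p_1,\dots,p_n\}$, so $\#g^{-1}(a_k)$ is at most the number $n_k$ of ends lying over $a_k$.
\item If $\al=b_j$, every preimage in $\Sigma$ has multiplicity $\ge\nu(b_j)$. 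Hence $\#g^{-1}(b_j)\le d/\nu(b_j)+n_j$, where $n_j$ is the number of ends over $b_j$.
\end{itemize}
Since distinct values have disjoint fibres, $\sum_k n_k+\sum_j n_j\le n$. Summing over $\mathcal{TR}_g$ gives
\[
d\,\nu_g-n\le 2d+2\gm-2,\qquad\text{that is,}\qquad \nu_g\le 2+\fr{2\gm-2+n}{d}=2+\fr{2}{R}.
\]

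\textbf{Step 2 ($2/R<2$).} Here I invoke the Jorge--Meeks (Osserman) inequality $C(\Sigma)\le 2\pi(\chi(\BSigma_\gm)-2n)$, which reads $2d\ge 2\gm-2+2n$. If it is not quoted as known, I would derive it from the Weierstrass data. The form $g\omega$ (or $\omega$, after a rotation making the ends regular points of $g$ not equal to $\infty$) has degree-$(2\gm-2)$ divisor. Completeness of $(1+|g|^2)^2|\omega|^2$ at each end forces a pole of order $\ge2$ there. The poles of $g$ are compensated by zeros of $\omega$ of order twice the pole order. Counting zeros minus poles then yields $2d-2n\ge 2\gm-2$.

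This divisor count is the step I expect to be the main obstacle. One must choose coordinates so that $g(p_j)\ne\infty$ and verify that completeness really forces pole order $\ge2$; otherwise $|\omega|$ would be integrable along a path to the end.

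\textbf{Step 3 (conclusion).} Step 2 gives $d\ge\gm-1+n>\gm-1+n/2$, so $1/R<1$ and $\nu_g<4$. For $\gm=0$ we have $d\ge n-1$, and so
\[
\fr{2}{R}=\fr{n-2}{d}\le\fr{n-2}{n-1}<1,
\]
which gives $\nu_g<3$. Since $D_g$ is an integer with $D_g\le\nu_g<3$, we get $D_g\le2$.
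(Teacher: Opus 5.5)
The paper does not prove this Fact; it quotes it from Kawakami--Kobayashi--Miyaoka and Kawakami--Watanabe. Your Steps 1 and 3 reproduce the standard argument of those sources. That argument is Osserman's Riemann--Hurwitz count for $D_g\le 3$, refined by weighting each totally ramified fibre. Your bound $\#g^{-1}(b_j)\le d/\nu(b_j)+n_j$ gives $d\,\nu_g-n\le 2d+2\gm-2$, and the Osserman/Jorge--Meeks inequality $d\ge \gm-1+n$ then gives $1/R<1$ in general and $d\ge n-1$ when $\gm=0$. If you quote that inequality as a known theorem, your proof is correct. One small point: in Step 3 the estimate $(n-2)/d\le (n-2)/(n-1)$ needs $n\ge 2$. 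For $n=1$, just note that $2/R=-1/d<0$.

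Your fallback derivation of the Jorge--Meeks inequality would fail at exactly the step you flagged, because the justification you give is false. Completeness does not force a pole of order $\ge 2$ at an end. If $g(p_j)\neq\infty$ and $\omega$ has a simple pole at $p_j$, then $ds\ge|\omega|\approx |c|\,|dz|/|z|$ near $p_j$, so every path into the end already has infinite length. Completeness only rules out $\omega$ being holomorphic at $p_j$.

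A simple pole is excluded instead by the period condition combined with isotropy. Write $\Psi\coloneqq(1-g^2,\ i(1+g^2),\ 2g)\,\omega$. If $\omega$ had a simple pole at $p_j$, then $\Psi=A\,dz/z+(\text{holomorphic})$ with $A=c\,(1-g(p_j)^2,\ i(1+g(p_j)^2),\ 2g(p_j))\neq 0$. The period condition on a small loop around $p_j$ gives $\Re(2\pi i A)=0$, i.e.\ $A\in\R^3$. On the other hand, $A_1^2+A_2^2+A_3^2=0$, because $(1-g^2)^2-(1+g^2)^2+4g^2\equiv 0$. A real isotropic vector is zero, which is a contradiction. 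Once pole order $\ge 2$ at every end is established this way, your divisor count $2\gm-2=\deg(\omega)\le 2d-2n$ goes through.
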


  Moreover, recently, Kawakami and Watanabe constructed 
  the following example that also satisfies $\nu_g = 2.5 > 2$.
  
  \begin{example}[{\cite[Theorem 3.7]{KW24}}] \label{ex:KW}
    Let $\Sigma = \Chat \setminus \{0, \pm i, \infty\}$, and define a pair $(g, \omega)$
    of a meromorphic function $g$ and a holomorphic $1$-form $\omega$ on $\Sigma$ by 
	\begin{equation}
			\left\{
			\begin{array}{l}
				g = \sigma \dfrac{(b-a) z^4 + 4 a (b-1) z^2 + 4 a (b-1)}{(b-a) z^4 + 4 (b-1) z^2 + 4 (b-1)},\\
				\\
				\omega = \dfrac{((b-a) z^4 + 4(b-1) z^2 + 4 (b-1))^2}{z^2 (z + i)^2(z-i)^2} dz,
			\end{array}
			\right.
	\end{equation} 
   where $a, b \in \R \setminus \{1\}$ with $a \neq b$, 
   and $\sigma \in i \R \setminus\{0\}$ satisfy
	\begin{equation} \label{eq:KW_period}
 		\sigma^2 = \fr{5a + 11b - 16}{16ab - 11a - 5b} < 0.
	\end{equation}
   Then, the pair $(g, \omega)$ gives a family of complete minimal immersions with $C(\Sigma) = -16 \pi$.
 
   The Gauss map $g : \Sigma \to \Chat$ of a minimal immersion in this family has two omitted values 
   $g(\infty) = g(\pm i) = \sigma$ and $g(0) = \sigma a$, 
   and the unique totally ramified value $g(\pm \rt{2} i) = \sigma b$ of order $2$.
   Thus, each minimal immersion in this family has the total weight of totally ramified values 
  $\nu_g = 2 + (1 - 1/2) =  2.5$.
  \end{example}

  Up to the present day, only the above two examples of 
  complete minimal immersions of finite total curvature with $\nu_g = 2.5 > 2$ have been known.
  Moreover, the families in Examples \ref{ex:MS} and \ref{ex:KW} give a sharp estimate 
  in \eqref{eq:TRBN_est} when $\gm = 0, n = 3$, and $\deg(g) = 2$,
  and when $\gm = 0, n = 4$, and $\deg(g) = 4$.
  Therefore, in this paper, 
  we discuss the existence of other examples with the same topological type as these two examples
  (see Question \ref{Question}).

\section{
  Some Estimates for orders and total weight of totally ramified values 
  of meromorphic functions on punctured Riemann surfaces
} \label{sec:estimate}

 In this section, we discuss estimates for the orders of totally ramified values of meromorphic functions
 on punctured compact Riemann surfaces in prder
 to apply them to the construction of meromorphic functions.
 
 \begin{proposition} \label{thm:estimate_order}
   Let $\BSigma$ be a compact Riemann surface, and $p_1, \dots, p_n \in \BSigma\ (n \in \Z_{\geq 0})$.
   Let $g : \BSigma \setminus \{p_1, \dots, p_n\} \to \Chat$ be a meromorphic function.
   Assume that the function $g$ extends meromorphically to $\BSigma$, i.e., $g : \BSigma \to \Chat$ is a 
   holomorphic map between compact Riemann surfaces, and suppose $d \coloneqq \deg(g) \geq 2$.
   Let $\mathcal{TR}_g \coloneqq \{a_1, \dots, a_{D_g}, b_1, \dots, b_{R_g}\} \subset \Chat$ 
   be the set of totally ramified values of $g$, where $a_k \ (k = 1, \dots, D_g)$ and  $b_j\ (j = 1, \dots, R_g)$
   denote the omitted values and 
   the totally ramified values other than the omitted values, respectively
    (see Definition \ref{def:TRV}).
   Then, the sum of orders of the totally ramified values other than the omitted values satisfies
   \begin{equation} \label{eq_prop:sum estimate}
   	2 R_g \leq S_g \leq \left \lfloor \fr{d}{d - 1} B_g \right \rfloor,
   \end{equation}
   where
   \begin{equation} \label{eq:Tdef}
   	 S_g \coloneqq \sum_{j = 1}^{R_g} \nu(b_j),
   	 \qquad
   	 B _g \coloneqq \sum_{p \in \BSigma} (e_p - 1) =  - \chi(\BSigma) + 2d,
   \end{equation}
   $e_p$ is the multiplicity of $g$ at $p$, $\nu(b_j)$ is the order of a totally ramified value at $b_j$
   {\nf (}see Definition \ref{def:TRV}{\nf )},
   and $\chi(\BSigma)$ is the Euler characteristic of $\BSigma$.
   
 \end{proposition}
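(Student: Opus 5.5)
The lower bound is immediate from Definition \ref{def:TRV}: each $b_j$ is not omitted, so $g^{-1}(b_j)\neq\emptyset$ and every preimage is a branch point. Hence $\nu(b_j)\geq 2$, and summing over $j$ gives $2R_g\leq S_g$. For the identity $B_g=-\chi(\BSigma)+2d$ I will apply the Riemann--Hurwitz formula to the holomorphic map $g:\BSigma\to\Chat$ of degree $d$, namely $\chi(\BSigma)=d\,\chi(\Chat)-\sum_{p}(e_p-1)=2d-B_g$.

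For the upper bound, fix a non-omitted totally ramified value $b_j$. Every $p\in g^{-1}(b_j)$ satisfies $e_p\geq\nu(b_j)$, and $\sum_{p\in g^{-1}(b_j)}e_p=d$, so the fibre has at most $d/\nu(b_j)$ points. Hence the branching contributed by this fibre is
\begin{equation*}
\sum_{p\in g^{-1}(b_j)}(e_p-1)=d-\#g^{-1}(b_j)\geq d-\frac{d}{\nu(b_j)}=\frac{d}{\nu(b_j)}\bigl(\nu(b_j)-1\bigr).
\end{equation*}
This alone controls $\sum_j(1-1/\nu(b_j))$, not $\sum_j\nu(b_j)$, so a second, cruder bound is needed. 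I will use $\nu(b_j)\leq d$, which holds since a single point of multiplicity at most $d$ lies in the fibre. Then $1-1/\nu\geq \frac{\nu-1}{\nu}$ together with $\nu\leq d$ gives
\begin{equation*}
\frac{\nu-1}{\nu}\geq \frac{d-1}{d}\cdot\frac{\nu-1}{\nu}\cdot\frac{d}{d-1}
\end{equation*}
but that is circular. The intended inequality is rather $d\,(1-1/\nu)\geq \frac{d-1}{d}\,\nu$, i.e.\ $d^2(\nu-1)\geq (d-1)\nu^2$. Is this true for $2\leq\nu\leq d$? The function $\phi(\nu)=d^2(\nu-1)-(d-1)\nu^2$ is concave in $\nu$. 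At $\nu=2$ it equals $d^2-4d+4=(d-2)^2\geq 0$, and at $\nu=d$ it equals $d^3-d^2-d^3+d^2=0$. By concavity, $\phi\geq 0$ on $[2,d]$.

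Summing over $j$, the fibres over distinct $b_j$ are disjoint, so
\begin{equation*}
B_g\geq\sum_j\sum_{p\in g^{-1}(b_j)}(e_p-1)\geq \frac{d-1}{d}\sum_j\nu(b_j)=\frac{d-1}{d}S_g.
\end{equation*}
This gives $S_g\leq \frac{d}{d-1}B_g$, and since $S_g$ is an integer, $S_g\leq\lfloor \frac{d}{d-1}B_g\rfloor$. Here $d\geq2$ is used so that $d-1>0$.

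The main step requiring care is finding the per-fibre inequality $d-\#g^{-1}(b_j)\geq\frac{d-1}{d}\nu(b_j)$. It is exactly the concavity check above, with equality at $\nu=d$ (a single point of full ramification) and at $\nu=2=d$. Omitted values contribute nothing, since their fibres are empty, and they are not counted in $S_g$, so there is no issue there.
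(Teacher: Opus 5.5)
Your proof is correct and follows the paper's argument. The steps match: the per-fibre bound $\sum_{p\in g^{-1}(b_j)}(e_p-1)\geq d-d/\nu(b_j)$ (the paper routes it through $c(\nu)=d-\lfloor d/\nu\rfloor$ first), the inequality $d^2(\nu-1)\geq (d-1)\nu^2$ for $2\leq\nu\leq d$ (the paper uses the factorization $-(\nu-d)\bigl((d-1)\nu-d\bigr)$ where you use concavity), and summation over the disjoint fibres followed by integrality of $S_g$. Delete the abandoned ``circular'' false start from the write-up.
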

 
 Under the notation and assumptions of Proposition \ref{thm:estimate_order}, 
 let us prove the following two lemmas.
 
 \begin{lemma} \label{lem:estimate_order_1}
   Let $b \in \mathcal{TR}_g \setminus \{a_1, \dots, a_{D_g}\}$, and set $\nu_b \coloneqq \nu(b)$.
   Then, it holds that
   \begin{equation} \label{eq:c(nu)}
   	\sum_{p \in g^{-1}(b)} (e_p - 1) \geq c(\nu_b) \qquad 
   	\left(
   		c(\nu) \coloneqq d - \left\lfloor \fr{d}{\nu} \right \rfloor \ (\nu \in \Z_{\geq 2})
   	\right).
   \end{equation}
 \end{lemma}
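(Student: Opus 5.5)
Let $g^{-1}(b)=\{p_1,\dots,p_m\}$, where the points are distinct in $\BSigma$. Since $b$ is a totally ramified value that is not omitted, we have $m\ge 1$. Every $p\in g^{-1}(b)$ is a branch point, and the minimality in \eqref{eq:order_TR} gives $e_p\ge \nu_b$ for each such $p$. The plan is to rewrite the left-hand side of \eqref{eq:c(nu)} using the degree, and then bound the number $m$ of preimages from above.

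The key identity is that a holomorphic map $g:\BSigma\to\Chat$ of degree $d$ satisfies $\sum_{p\in g^{-1}(b)} e_p = d$ for every $b\in\Chat$. Here we must count all preimages in the compact surface $\BSigma$, including any that lie at the punctures. This is consistent with how $B_g$ is defined in \eqref{eq:Tdef}, as a sum over all $p \in \BSigma$. Hence
\begin{equation*}
  \sum_{p\in g^{-1}(b)} (e_p-1) = d - m .
\end{equation*}

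Next we bound $m$. From $d=\sum_{p\in g^{-1}(b)} e_p \ge m\,\nu_b$ we get $m\le d/\nu_b$. Since $m$ is an integer, this gives $m\le \lfloor d/\nu_b\rfloor$. Therefore
\begin{equation*}
  \sum_{p\in g^{-1}(b)}(e_p-1) = d-m \ge d-\left\lfloor \fr{d}{\nu_b}\right\rfloor = c(\nu_b).
\end{equation*}

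The only point requiring care is which preimages are counted. The set $g^{-1}(b)$ must be taken in $\BSigma$, so that the fibre has total multiplicity exactly $d$. We also need the condition $e_p\ge\nu_b$ to hold at every point of this fibre. If $b$ is totally ramified only on the punctured surface, a puncture could be a simple preimage of $b$. In that case we would read the definition of $\nu(b)$ over the preimages in $\BSigma$, or more generally note that the lemma concerns the fibre on which the total ramification condition holds. Apart from this interpretive check, the argument is a direct counting argument.
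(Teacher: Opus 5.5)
Your proof is the paper's counting argument: $\sum_{p\in g^{-1}(b)}(e_p-1)=d-m$, and $d=\sum e_p\ge m\nu_b$ gives $m\le\lfloor d/\nu_b\rfloor$. Your interpretive caveat is well taken, since the paper's proof also tacitly takes $g^{-1}(b)$ in $\BSigma$ with $e_p\ge\nu_b$ at every point. Only that first reading works, though: if a puncture is a simple preimage of $b$, the inequality fails (e.g.\ $d=4$ with one double point in $\BSigma\setminus\{p_1,\dots,p_n\}$ and two simple punctures over $b$ gives $1<c(2)=2$), so you should drop your ``or more generally'' fallback.
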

 
 \begin{proof}
   By definition of the order $\nu : \mathcal{TR}_g \to \Z_{\geq 2} \cup \{\infty\}$, 
   we have $e_p \geq \nu_b$.
   Let $m \in \Z_{\geq 1}$ be the cardinality of $g^{-1}(b)$.
   Since
   $$
   	d = \sum_{p \in g^{-1}(b)} e_p \geq \sum_{p \in g^{-1}(b)} \nu_b = m \nu_b,
   $$
   and $m$ is an integer, we have $m \leq \lfloor d / \nu_b \rfloor$.
   Thus, we find that 
   $$
   	\sum_{p \in g^{-1}(b)} (e_p - 1) = d - m \geq d - \left \lfloor \fr{d}{\nu_b} \right \rfloor = c(\nu_b). 
   $$
 \end{proof}
 
 \begin{lemma} \label{lem:estimate_order_2}
   Let $b \in \mathcal{TR}_g \setminus \{a_1, \dots, a_{D_g}\}$, and set $\nu_b = \nu(b)$.
   Then, it holds that
   \begin{equation}
   	\nu_b \leq \fr{d}{d - 1} c(\nu_b).
   \end{equation}
 
 \end{lemma}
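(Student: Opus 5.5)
The plan is to reduce the claim to an elementary inequality in the two integers $\nu = \nu_b$ and $d$, and to dispose of the floor function by the trivial bound $\lfloor d/\nu \rfloor \leq d/\nu$.

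First, I record the range of $\nu_b$. Since $b$ is not omitted, $g^{-1}(b)$ is nonempty. The order $\nu_b$ is attained as $e_p$ at some $p \in g^{-1}(b)$, and every multiplicity satisfies $e_p \leq d$. Together with the definition of totally ramified values this gives $2 \leq \nu_b \leq d$.

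Since $d \geq 2$, we have $d - 1 > 0$, so the desired inequality is equivalent to
$$
\nu_b (d-1) \leq d\left(d - \left\lfloor \fr{d}{\nu_b} \right\rfloor\right).
$$
Using $\lfloor d/\nu_b \rfloor \leq d/\nu_b$, the right-hand side is at least $d^2 - d^2/\nu_b$. It therefore suffices to show
$$
F(x) \coloneqq \fr{d^2}{x} + (d-1)x - d^2 \leq 0 \qquad \text{for } x \in [2, d].
$$

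The function $F$ is convex on $(0,\infty)$, being a convex term $d^2/x$ plus a linear function. A convex function on an interval is bounded by its values at the endpoints, so it is enough to check the two endpoints. At $x = 2$ we get $F(2) = -\tfrac{1}{2}(d-2)^2 \leq 0$. At $x = d$ we get $F(d) = d + d^2 - d - d^2 = 0$. Hence $F(\nu_b) \leq 0$, which yields $\nu_b \leq \fr{d}{d-1} c(\nu_b)$.

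There is no real obstacle here. The only points requiring care are establishing $\nu_b \leq d$, so that the convexity argument applies on the correct interval, and observing that discarding the floor goes in the favourable direction.
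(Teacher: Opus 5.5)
Your proof is correct and follows the paper's route: you drop the floor via $\lfloor d/\nu_b \rfloor \leq d/\nu_b$ and then verify the resulting one-variable inequality for $2 \leq \nu_b \leq d$. The only difference is cosmetic. The paper factors $d^2(\nu_b - 1) - (d-1)\nu_b^2 = (d - \nu_b)\bigl((d-1)\nu_b - d\bigr)$, which equals $-\nu_b F(\nu_b)$ in your notation, and uses $\nu_b \geq 2$; you check the endpoints of a convex function instead. You also justify $\nu_b \leq d$ explicitly, which the paper uses without comment.
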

 
 \begin{proof}
   By $\lfloor d / \nu_b \rfloor \leq d / \nu_b$ and definition of $c(\nu_b)$, one can observe
   $$
   	c(\nu_b) \geq d - \fr{d}{\nu_b} = \fr{d (\nu_b - 1)}{\nu_b},
   $$ 
   and so
   \begin{equation} \label{lem_eq_c(nu_b)}
   	\fr{d}{d - 1} c(\nu_b) \geq \fr{d^2 (\nu_b - 1)}{(d - 1) \nu_b }.
   \end{equation}
   Since $2 \leq \nu_b \leq d$, we have 
   \begin{align*}
   	d^2 (\nu_b - 1) - (d - 1) \nu_b^2 = - (\nu_b - d) ((d - 1) \nu_b - d) 
   	&\geq - (\nu_b - d)((d - 1) \cdot 2 - d) \\
   	&= (d - \nu_b)(d - 2)\geq 0.
   \end{align*}
   Hence, we know that
   $$
   	\fr{d^2 (\nu_b - 1)}{(d - 1) \nu_b } \geq \nu_b,
   $$
   which implies the conclusion by \eqref{lem_eq_c(nu_b)}.
 \end{proof}
 
 \begin{proof}[Proof of Proposition \ref{thm:estimate_order}]
  The first inequality in \eqref{eq_prop:sum estimate} follows from
  $\nu(b_j) \geq 2$.
  
   Let us show the second inequality in \eqref{eq_prop:sum estimate}.
   By Lemma \ref{lem:estimate_order_1}, 
   $$
   	\sum_{j = 1}^{R_g} c(\nu(b_j)) \leq \sum_{j = 1}^{R_g} \sum_{p \in g^{-1}(b_j)} (e_p - 1)
   	 = \sum_{p \in \bigsqcup_{j = 1}^{R_g} g^{-1}(b_j)} (e_p - 1)
   	\leq \sum_{p \in \BSigma} (e_p - 1) =  B _g.
   $$
   This inequality and Lemma \ref{lem:estimate_order_2} yield
   $$
   	S_g = \sum_{j = 1}^{R_g} \nu(b_j) \leq \fr{d}{d - 1} \sum_{j = 1}^{R_g} c(\nu(b_j)) \leq \fr{d}{d - 1}  B _g,
   $$
   which implies the conclusion.
 \end{proof}
 
 As a corollary of Proposition \ref{thm:estimate_order}, one can easily deduce the following assertion.
 
 \begin{corollary} \label{cor:order_estimate}
   If $\BSigma = \Chat$ in Proposition \ref{thm:estimate_order}, then it holds that
   \begin{equation} \label{eq_cor:total order}
   	2 R_g \leq S_g \leq 2d.
   \end{equation}
 \end{corollary}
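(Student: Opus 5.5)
For $\BSigma = \Chat$ the corollary should follow by substituting $\chi(\Chat) = 2$ into Proposition \ref{thm:estimate_order} and then sharpening the resulting bound. The first inequality $2R_g \leq S_g$ is already part of \eqref{eq_prop:sum estimate}, because each order satisfies $\nu(b_j) \geq 2$. For the second inequality, Riemann--Hurwitz gives $B_g = -2 + 2d = 2(d-1)$. Then
\[
\fr{d}{d-1} B_g = \fr{d}{d-1}\cdot 2(d-1) = 2d,
\]
so $S_g \leq \lfloor 2d \rfloor = 2d$, since $2d$ is an integer.

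There is no real obstacle. The only care needed is that $B_g = -\chi(\BSigma) + 2d$ is just Riemann--Hurwitz for a degree-$d$ map $\BSigma \to \Chat$. This is already packaged into the statement of Proposition \ref{thm:estimate_order}, so we may use it directly.

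The hypothesis $d \geq 2$, needed to divide by $d-1$, is inherited from the proposition. The degenerate case $R_g = 0$ gives $S_g = 0$, and then $0 \leq 0 \leq 2d$ holds trivially.
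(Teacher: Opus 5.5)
Your proposal is correct and is exactly the paper's (implicit) argument: with $\chi(\Chat)=2$ one gets $B_g = 2d-2$, so $\bigl\lfloor \fr{d}{d-1}B_g \bigr\rfloor = 2d$, and the lower bound $2R_g \leq S_g$ carries over directly from Proposition \ref{thm:estimate_order}. No ``sharpening'' step is needed beyond this substitution.
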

 
 The inequality \eqref{eq_prop:sum estimate} and especially \eqref{eq_cor:total order}
 are sharp in the following sense. 
 Let a meromorphic function $g: \Chat \to \Chat $ be $g(z) = z^d \ (d \geq 2)$, and it satisfies them.
 In fact, this $g$ has two totally ramified values $0$ and $\infty$ and is of degree $d$.
 Hence, we see that $\nu(0) + \nu(\infty) = d + d = 2d$.
 Moreover, for the total weight of totally ramified values of meromorphic functions, 
 we find the following estimate.
 
 \begin{corollary} \label{thm:nug_estimate}
   Under the same assumption and notation as Proposition \ref{thm:estimate_order}, 
   the total weight of totally ramified values of a meromorphic function $g$ satisfies
   \begin{equation} \label{eq_thm:nug_estimate}
   	\nu_g  \leq D_g + \fr{1}{4} \left \lfloor \fr{d}{d - 1} B_g \right \rfloor. 
   \end{equation} 
 \end{corollary}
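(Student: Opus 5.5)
We need to bound $\nu_g = D_g + \sum_j (1 - 1/\nu(b_j))$ by $D_g + \frac{1}{4}\lfloor \frac{d}{d-1}B_g\rfloor$. So it suffices to show $\sum_j (1-1/\nu(b_j)) \le S_g/4$ and then invoke Proposition~\ref{thm:estimate_order}.

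The pointwise inequality to use is $1 - 1/\nu \le \nu/4$ for every integer $\nu \ge 2$, and in fact for every real $\nu>0$. This is equivalent to $4\nu - 4 \le \nu^2$, i.e.\ $(\nu-2)^2 \ge 0$, with equality exactly at $\nu = 2$. Applying it to each $\nu(b_j)$ with $j=1,\dots,R_g$ (all of which lie in $\Z_{\ge 2}$, so no infinite orders occur among the $b_j$) and summing gives
\[
\nu_g - D_g = \sum_{j=1}^{R_g}\left(1 - \fr{1}{\nu(b_j)}\right) \le \fr{1}{4}\sum_{j=1}^{R_g}\nu(b_j) = \fr{S_g}{4}.
\]
The second inequality in \eqref{eq_prop:sum estimate} states $S_g \le \lfloor \frac{d}{d-1}B_g\rfloor$, and substituting this yields \eqref{eq_thm:nug_estimate}.

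There is no real obstacle here. The only points needing care are that the omitted values contribute exactly $D_g$ and are excluded from $S_g$, and that the case $R_g=0$ is trivial since then $\nu_g = D_g$.
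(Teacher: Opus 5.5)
Your proof is correct. Like the paper, you reduce the claim to $\nu_g \le D_g + S_g/4$ and then apply the bound $S_g \le \lfloor \frac{d}{d-1}B_g\rfloor$ from Proposition~\ref{thm:estimate_order}. The two arguments differ only in how that middle inequality is obtained.

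The paper writes $\nu_g = D_g + R_g - \sum_j 1/\nu(b_j)$ and uses Cauchy--Schwarz to get $\sum_j 1/\nu(b_j) \ge R_g^2/S_g$. It then uses $R_g - R_g^2/S_g \le S_g/4$, which after clearing denominators is $(S_g - 2R_g)^2 \ge 0$.

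Your termwise inequality $1 - 1/\nu \le \nu/4$, i.e.\ $(\nu-2)^2 \ge 0$, does both steps at once. It also avoids the expression $R_g^2/S_g$, which is undefined when $R_g = 0$; the paper does not treat that case separately. And it makes the equality case, $\nu(b_j) = 2$ for all $j$, immediate.

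In exchange, the paper's route gives the intermediate bound $\nu_g \le D_g + R_g - R_g^2/S_g$. This is strictly sharper than $D_g + S_g/4$ whenever $S_g > 2R_g$, but the extra information is not needed for the corollary as stated.
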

 
 \begin{proof}
   By definition of $\nu_g$ and Proposition \ref{thm:estimate_order}, we have
   \begin{align*}
 	\nu_g 
 	= D_g + R_g - \sum_{j = 1}^{R_g} \fr{1}{\nu(b_j)} 
 	\leq D_g + R_g - \fr{R_g^2}{S_g}
 	&\leq D_g + \fr{S_g}{4}\\
 	& \leq D_g + \fr{1}{4} \left \lfloor \fr{d}{d - 1} B_g \right \rfloor.
    \end{align*}
    In fact, the first inequality follows from Cauchy--Schwarz's inequality.
    The second inequality is obtained by $R_g - R_g^2 / S_g \leq S_g/4$ since
     $0 \leq R_g \leq S_g / 2$.
 \end{proof}
 
 \begin{corollary} \label{cor:surj_25}
   In Corollary \ref{thm:nug_estimate}, if $\BSigma = \Chat$ and $d \leq 4$, then there is
   no surjective meromorphic functiong (i.e., $D_g = 0$) satisfying $\nu_g  > 2$.
   Moreover, if $\BSigma = \Chat$ and $d = 2$, then there is no
   meromorphic function with $D_g \leq  1$ satisfying $\nu_g  > 2$.
 \end{corollary}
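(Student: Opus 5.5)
The plan is to specialize Corollary \ref{thm:nug_estimate} to $\BSigma = \Chat$ and read off both assertions directly.

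\textbf{Computing the bound.} For $\BSigma = \Chat$ we have $\chi(\Chat) = 2$, so by \eqref{eq:Tdef}
\[
  B_g = -2 + 2d = 2(d-1).
\]
Hence $\frac{d}{d-1} B_g = 2d$, which is already an integer, so the floor changes nothing. (This is the same computation that gives Corollary \ref{cor:order_estimate}.) Substituting into \eqref{eq_thm:nug_estimate} gives
\[
  \nu_g \leq D_g + \frac{2d}{4} = D_g + \frac{d}{2}.
\]

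\textbf{First assertion.} Suppose $g$ is surjective, so $D_g = 0$, and $d \leq 4$. The bound becomes $\nu_g \leq d/2 \leq 2$, so $\nu_g > 2$ cannot occur. The standing hypothesis $d \geq 2$ of Proposition \ref{thm:estimate_order} is needed for the corollary to apply. If one also wants to cover $d = 1$, this case is trivial: a Möbius transformation has no branch points, so every $b_j$ is impossible and $\nu_g = D_g = 0$.

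\textbf{Second assertion.} Suppose $d = 2$ and $D_g \leq 1$. The bound becomes $\nu_g \leq D_g + 1 \leq 2$, which again rules out $\nu_g > 2$.

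There is no real obstacle here. The only point needing care is that the floor in \eqref{eq_thm:nug_estimate} is harmless because $2d$ is an integer, and that the hypotheses of Proposition \ref{thm:estimate_order} (in particular $d \geq 2$) hold in the stated cases.
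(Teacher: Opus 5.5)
Your proposal is correct and follows the paper's proof: both substitute $B_g = 2d-2$ (so $\frac{d}{d-1}B_g = 2d$) into Corollary \ref{thm:nug_estimate}, obtaining $\nu_g \leq D_g + d/2$, which is at most $2$ in both stated cases. Your remarks about the floor being harmless and about the trivial case $d=1$ are harmless additions that the paper leaves implicit.
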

 
 \begin{proof}
   Assume that there exists such a meromorphic function $g$.
   Then, $\BSigma = \Chat, d \leq 4, D_g = 0$, and \eqref{eq_thm:nug_estimate} imply
   $$
   	\nu_g \leq \fr{1}{4} \left \lfloor \fr{d}{d - 1}\cdot (- 2 + 2d) \right \rfloor = \fr{1}{2} d \leq 2,
   $$
   which is a contradiction.
   
   Next, we show the latter part. 
   If there exists such a meromorphic function, then, by $d = 2$ and $D_g \leq 1$, we see that
   $$
   	\nu_g \leq 1 + \fr{1}{4} \left \lfloor \fr{2}{2 - 1}\cdot (-2 + 2 \cdot 2 ) \right \rfloor = 2,
   $$
   which is also a contradiction, and we obtain the conclusion.
 \end{proof}
 
  Using the latter part of this Corollary and the fact stated in Example \ref{ex:MS}, 
 we obtain the following corollary, which gives the answer to \ref{Question_1} in Question \ref{Question}.
 
 \begin{corollary} \label{thm:answer_MS}
   A complete minimal immersion $\Chat \setminus \{\text{$3$ points}\} \to \R^3$ with 
   total curvature $-8\pi$ whose total weight of totally ramified values of 
   the Gauss map satisfies
   $\nu_g = 2.5$ is given by the family in Example \ref{ex:MS}. 
 \end{corollary}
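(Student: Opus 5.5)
The plan is to reduce everything to the classification of Miyaoka and Sato quoted in Example \ref{ex:MS}. That result covers the case $D_g = 2$, so the task is to show that $\nu_g = 2.5$ forces $D_g = 2$.

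Let $f : \Sigma = \Chat \setminus \{p_1, p_2, p_3\} \to \R^3$ be a complete minimal immersion with $C(\Sigma) = -8\pi$. By the fact of Huber and Osserman, the Gauss map $g$ extends to a holomorphic map $g : \Chat \to \Chat$ with $\deg(g) = |C(\Sigma)|/4\pi = 2$. Since $f$ is non-flat (its total curvature is nonzero), Fact \ref{KKM_TW_estimate} with $\gm = 0$ gives $D_g \leq 2$.

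Suppose $\nu_g = 2.5 > 2$. The latter part of Corollary \ref{cor:surj_25} applies here, because $\BSigma = \Chat$ and $d = 2$. It says that no meromorphic function with $D_g \leq 1$ can have $\nu_g > 2$. Hence $D_g \geq 2$, and combined with $D_g \leq 2$ we get $D_g = 2$.

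Therefore $f$ is a complete minimal immersion of the three-punctured sphere with total curvature $-8\pi$ and $D_g = 2$. By \cite[Theorem 2]{MS94}, as recalled in Example \ref{ex:MS}, $f$ belongs to that family, possibly after a Möbius change of coordinate on $\Chat$ moving the punctures to $\{\pm i, \infty\}$ and an ambient rigid motion. Conversely, Example \ref{ex:MS} records that every member of the family has $\nu_g = 2.5$, so the characterization is exact.

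The only step needing care is the normalization in this last step. We must check that "given by the family" is understood up to these equivalences, exactly as in the statement of \cite{MS94}. The value-distribution content is entirely contained in Corollary \ref{cor:surj_25}, so I expect no real obstacle.
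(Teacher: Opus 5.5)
Your proposal is correct and follows the paper's argument: the latter part of Corollary \ref{cor:surj_25} with $d=2$ rules out $D_g\le 1$, which forces $D_g=2$, and then the Miyaoka--Sato classification quoted in Example \ref{ex:MS} applies. Your appeal to Fact \ref{KKM_TW_estimate} for $D_g\le 2$ is valid, though unnecessary, since $D_g\le\nu_g=2.5$ already gives it.
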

 
 At the end of this section, we state an estimate for the number $R_g$ of totally ramified values 
 other than omitted values of a meromorphic 
 function defined on a punctured Riemann surface.
 
 \begin{corollary} \label{cor:R_g_estimate}
   Under the same assumptions and notation as Proposition \ref{thm:estimate_order}, 
   the number $R_g$ of totally ramified values other than omitted values of a meromorphic function $g$ satisfies
   \begin{equation}
     R_g \leq    
         \left\lfloor \fr{B_g}{\lceil d/2 \rceil} \right \rfloor.
   \end{equation}
 \end{corollary}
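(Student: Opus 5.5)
The plan is to bound from below the total branching that each non-omitted totally ramified value consumes, and then compare with $B_g$, exactly as in the proof of Proposition \ref{thm:estimate_order}. By Lemma \ref{lem:estimate_order_1}, each $b_j$ satisfies
$$
\sum_{p \in g^{-1}(b_j)} (e_p - 1) \geq c(\nu(b_j)) = d - \left\lfloor \fr{d}{\nu(b_j)} \right\rfloor .
$$
The function $\nu \mapsto d - \lfloor d/\nu \rfloor$ is nondecreasing in $\nu$, and $\nu(b_j) \geq 2$. Hence each $b_j$ contributes at least $c(2) = d - \lfloor d/2 \rfloor = \lceil d/2 \rceil$ to the branching count.

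Next, the fibres $g^{-1}(b_j)$ for distinct $j$ are pairwise disjoint. Summing over $j$ and using the definition of $B_g$ in \eqref{eq:Tdef} therefore gives
$$
R_g \left\lceil \fr{d}{2} \right\rceil \leq \sum_{j=1}^{R_g} \sum_{p \in g^{-1}(b_j)} (e_p - 1) \leq \sum_{p \in \BSigma} (e_p - 1) = B_g .
$$
Dividing by the positive integer $\lceil d/2 \rceil$, and noting that $R_g$ is an integer, yields $R_g \leq \lfloor B_g / \lceil d/2 \rceil \rfloor$.

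No step is a real obstacle. The only points to check are the monotonicity of $c$ and the identity $d - \lfloor d/2 \rfloor = \lceil d/2 \rceil$, and both are immediate. Alternatively, one can bypass $c$ altogether: each point of $g^{-1}(b_j)$ has $e_p \geq 2$, so $e_p - 1 \geq e_p/2$, and summing over the fibre gives $\sum (e_p - 1) \geq d/2$. Since the left-hand side is an integer, it is at least $\lceil d/2 \rceil$.
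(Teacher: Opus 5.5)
Your proposal is correct and follows essentially the same route as the paper: you bound each fibre's branching below by $c(\nu(b_j)) \geq c(2) = \lceil d/2 \rceil$ via Lemma \ref{lem:estimate_order_1} and monotonicity of $c$, then sum over the disjoint fibres and compare with $B_g$. Your alternative shortcut via $e_p - 1 \geq e_p/2$ for $e_p \geq 2$ is also valid.
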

 
 \begin{proof}
   Let $b_j \in \mathcal{TR}_g \setminus \{a_1, \dots, a_{D_g}\}\ (j = 1, \dots R_g)$.
   Since $c(\nu)$ in \eqref{eq:c(nu)} is monotonically increasing and it takes the minimum value 
   $c(2) = d - \lfloor d/2 \rfloor = \lceil d/2 \rceil$, by Lemma \ref{lem:estimate_order_1}, we have
   $$
     B_g \geq \sum_{j = 1}^{R_g} \sum_{p\in g^{-1}(b_j)}(e_p - 1) \geq \sum_{j = 1}^{R_g} c(\nu(b_j)) 
     \geq \sum_{j = 1}^{R_g} c(2) = R_g \left \lceil \fr{d}{2} \right \rceil,
   $$
   which implies the conclusion.
 \end{proof}
 
\section{
 Complete minimal surfaces on four-punctured Riemann spheres  
 with $C(\Sigma) = -16\pi$ and $\nu_g = 2.5$
} \label{sec:KW_determining}

 In this section, we consider complete minimal immersions $\Chat \setminus \{\text{$4$ points}\} \to \R^3$
 with total curvature $-16\pi$ and total weight $\nu_g = 2.5$ of 
 totally ramified values of the Gauss map.
 
 Let us consider a complete minimal immersion 
 $f : \Sigma \coloneqq \Chat \setminus \{\text{$4$ points}\} \to \R^3$ that satisfies such conditions.
 By a coordinate change of the source Riemann surface $\Sigma$, 
 we may assume that 
 $\Sigma = \Chat \setminus \{\infty, \pm i, t\} \ (t \in \Chat \setminus \{\infty, \pm i\}).$
 By a suitable rotation in $\R^3$, we may also suppose that $g$ has no pole at each end.
 In addition, since $\deg(g) = 4$, Riemann--Hurwitz's formula gives
 \begin{equation}
  B_g = 6.
 \end{equation}
 Let $\mathcal{TR}_g = \{a_1, \dots, a_{D_g}, b_1, \dots, b_{R_g}\} \subset \Chat$ 
 be the set of totally ramified values of the Gauss map $g$, where $a_k \ (k = 1, \dots, D_g)$ and  
 $b_j\ (j = 1, \dots, R_g)$
 denote the omitted values and the totally ramified values other than the omitted values, respectively.
 By Corollary \ref{cor:R_g_estimate}, the number $R_g$ satisfies 
 \begin{equation} \label{eq:R_g3}
   R_g \leq 3.
 \end{equation}
  In addition, by Corollary \ref{cor:order_estimate}, note that
 \begin{equation} \label{eq:nu2ijo}
 	2 R_g \leq \sum_{j = 1}^{R_g} \nu_j \leq 8,
 \end{equation}
 where $\nu_j \coloneqq \nu(b_j)\  (j = 1, \dots, R_g)$.
 
  \subsection{
 The case where $D_g = 2$
 }
 
 In this case, the total weight $\nu_g$ of totally ramified values of the Gauss map is
 $$
 	\nu_g = 2 + \sum_{j = 1}^{R_g} \left (1 -\fr{1}{\nu(b_j)}\right).
 $$ 
 If $R_g \geq 2$, then $\nu_g$ satisfies
 $$
 	\nu_g = (2 + R_g) - \sum_{j = 1}^{R_g} \fr{1}{\nu(b_j)} \geq (2 + R_g) - \fr{1}{2} R_g = 2 + \fr{1}{2} R_g \geq 3 > 2.5,
 $$ 
 which is a contradiction.
 Hence, since we see $R_g = 1$,
 set $\mathcal{TR}_g = \{\sigma, \tau, b\}$, 
 where $\sigma, \tau \in \Chat \setminus \{\infty\} = \C$ are omitted values, and $b \in \Chat $ 
 is the unique totally ramified value, which are all distinct.
 Also since $\deg(g) = 4$ and $\nu_g = 2.5$, we can set $g^{-1}(b) = \{q_1, q_2\}$, 
 where $q_1, q_2 \in \Chat$ 
 are distinct values
 that satisfy 
 \begin{equation}\label{eq:order_RV_5.2}
   \ord_{q_l}(g - b) = 2 \qquad  (l = 1, 2),
  \end{equation}
  and hence the order of 
 the totally ramified value $b$ is $\nu(b) = 2$.
 
 Here, let a M\"{o}bius transformation $\Phi : \Chat \to \Chat$ be defined by
 \begin{equation} \label{eq:Mobius_2}
   \Phi(w) \coloneqq \fr{\tau - b}{\tau - \sigma} \cdot \fr{w - \sigma}{w - b},
 \end{equation}
 which satisfies
 $$
 \Phi(\sigma) = 0, \qquad \Phi(\tau) = 1, \qquad \Phi(b) = \infty.
 $$
 Now, let 
 \begin{equation}
 G\coloneqq \Phi \circ g : \Chat \setminus \{\infty, \pm i, t\} \to \Chat.
 \end{equation}
 Then, the meromorphic function $G$ omits the values $\{0, 1\}$ and has a totally ramified value $\infty$.
 Since the Gauss map $g$ has no pole at any end, 
 the function $G$ has no poles at the ends.
 In addition, since $\deg(g) = \deg(G) = 4$ and $g^{-1}(b) = \{q_1, q_2\}$ satisfying \eqref{eq:order_RV_5.2},
 we have
 \begin{equation} \label{eq:asum_G}
   G^{-1}(\infty) = \{q_1, q_2\}, \qquad
   \ord_{q_l} G = -2,\qquad 
   \{q_1, q_2\} \cap \{\infty, \pm i, t\} = \varnothing.
 \end{equation}
 Hence, all zeros of $G$ and $G-1$ occur at the ends.
 
 Next, let us classify the allocation of the ends to the omitted values $0$ and $1$ of $G : \Sigma \to \Chat$,
 together with their multiplicities. 
 For brevity, we write
 $$
 	0 : [\lambda_1, \lambda_2, \lambda_3, \lambda_4]
 $$
 to mean that the multiplicity of $0$ of $G$ at ends are 
 $\lambda_1, \lambda_2, \lambda_3,$ and  $\lambda_4$ (no particular order).
 Similarly, 
 $$
 	1 : [\mu_1, \mu_2, \mu_3, \mu_4]
 $$ 
 for that of $1$ of $G$;
 in other words, each $\mu_m\ (m = 1, 2, 3, 4)$ is the order of zero of $G-1$ at an end.
 We always assume $\lambda_m, \mu_m \in \Z_{\geq 0}$, and then
 $$
 	\sum_{m = 1}^4 \lambda_m = \sum_{m = 1}^4 \mu_m = \deg(G) = 4
 $$
 holds.
 An entry $\lambda_m = 0$ (resp. $\mu_m = 0$) indicates that the corresponding end is not 
 contained in the fiber of $0$ (resp. $1$) by $G$;
 for convenience, zero entries are omitted from the bracket notation.
 Additionally, The notation $G(p) = q^{(k)}$ means that $\ord_p(G - q) = k \in \Z_{\geq 1}$, 
    and $G(p) = \infty^{(k)}$ means that $\ord_p G = - k \in \Z_{< 0}$. 
 
 Under these notations, we must consider the following four cases:
 \begin{center}
   {\bf Case 1}: $0 : [4], \quad 1 : [2, 1, 1]$, \qquad {\bf Case 2}: $0 : [3, 1], \quad 1 : [3, 1]$,\\
   {\bf Case 3}: $0 : [3, 1], \quad 1 : [2, 2]$,\qquad {\bf Case 4}: $0 : [2, 2], \quad 1 : [2, 2]$.
 \end{center}
 We will show that the above cases reduce to only three cases by taking the totally ramified values into consideration.
  \noindent
  Let us first show that if there exists a meromorphic function to satisfy one of {\bf Case 1} through {\bf Case 4} above, 
  then the structure of the source Riemann surface can be essentially determined as follows.
  
  \begin{lemma} \label{lem:structure_of_source}
    Let $E \coloneqq \{\infty, \pm i, t\}$, and let $G : \Sigma = \Chat \setminus E \to \Chat$ be a meromorphic function.
    Assume that the meromorphic function $G$ extends to $\Chat$ meromorphically, 
    $\deg G = 4$,  
    $G(\Chat \setminus E) = \Chat \setminus \{0, 1\}$, 
    $\infty$ is the unique totally ramified value of $G$ 
    other than omitted values $0$ and $1$, and \eqref{eq:asum_G}.
    
    \begin{enumerate}[label = (\arabic*)]
     \item \label{sphere_structure_1}
       If $G$ satisfies the condition of {\bf Case 1}, then the source Riemann surface $\Sigma$ is 
       biholomorphic to $\Chat \setminus \{\infty, \pm i, 0\}$.
     \item \label{sphere_structure_2}
       If $G$ satisfies the condition of {\bf Case 2}, then $\Sigma$ is 
       biholomorphic to $\Chat \setminus \{\infty, \pm i, - 17 i\}$.
     \item
       There does not exist a meromorphic function $G$ satisfying the condition of {\bf Case 3}.
     \item \label{sphere_structure_4}
       If $G$ satisfies the condition of {\bf Case 4}, then the source Riemann surface $\Sigma$ is 
       biholomorphic to $\Chat \setminus \{\infty, \pm i, 0\}$.
   \end{enumerate}
  \end{lemma}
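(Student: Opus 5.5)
The plan is to reduce everything to Riemann--Hurwitz plus an explicit normal form for $G$. Because $\deg G = 4$ on $\Chat$, we have $B_G = 6$. The fiber over $\infty$ consists of two double points by \eqref{eq:asum_G}, contributing $2$. A fiber of type $[\lambda_1,\dots,\lambda_k]$ contributes $4-k$, so in each of {\bf Case 1}--{\bf Case 4} the fibers over $0$ and $1$ together contribute exactly $4$. For example, {\bf Case 1} gives $3+1$, and the others give $2+2$. Hence $G$ is branched only over $\{0,1,\infty\}$, with no further critical points. The set $E$ is exactly $G^{-1}(\{0,1\})$, and by \eqref{eq:asum_G} it is disjoint from $G^{-1}(\infty)$. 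Since four points of $\Chat$ are determined up to M\"obius transformation by the orbit of their cross-ratio, it suffices to find the cross-ratio of $G^{-1}(\{0,1\})$ in each case. We then choose a M\"obius map sending three of these points to $\infty, \pm i$ and read off where the fourth goes.

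First I would dispose of {\bf Case 3} by monodromy. A branched cover of degree $4$ branched over $0,1,\infty$ corresponds to permutations $\pi_0,\pi_1,\pi_\infty\in S_4$ with $\pi_0\pi_1\pi_\infty=1$ generating a transitive group, where the cycle types record the multiplicities. In {\bf Case 3} we have $\pi_0$ of type $(3,1)$ and $\pi_1,\pi_\infty$ both of type $(2,2)$. The elements of type $(2,2)$ lie in the Klein four-group $V_4$, so $\pi_0=(\pi_1\pi_\infty)^{-1}\in V_4$, which cannot be a $3$-cycle. This contradiction gives (3).

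For the remaining cases I would write $G = P/Q^2$ with $Q$ a quadratic whose roots are $q_1,q_2$, and $G-1 = R/Q^2$, so that
\begin{equation*}
P - Q^2 = R ,
\end{equation*}
after absorbing constants. Here $P$ and $R$ have the prescribed root patterns and all roots lie in $E$. I would normalize the source coordinate conveniently before matching to $\{\infty,\pm i,t\}$.

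\textbf{Case 4.} Here $P=A^2$ and $R=-cC^2$ with $A,C$ quadratic, so $(Q-A)(Q+A)=cC^2$. If $C = (z-\alpha)(z-\beta)$ with $\alpha\neq\beta$, the factor $Q-A$ vanishes at $\alpha$, say, and $Q+A$ at $\beta$ (as $A,Q$ have no common roots), and up to constants the degree bookkeeping forces $Q\pm A$ to be $(z-\alpha)^2$ and $(z-\beta)^2$. Placing $\alpha=0$, $\beta=\infty$ (degree drop), this gives, up to scaling, $Q = z^2+1$ and $A = z^2-1$. The ends are then $z=\pm1$ (roots of $A$) and $0,\infty$ (roots of $C$). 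This is a harmonic configuration, M\"obius equivalent to $\{\infty,\pm i,0\}$, which proves (4).

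\textbf{Case 1.} Put the quadruple zero at $0$, so $P=z^4$ and $z^4-Q^2=(z^2-Q)(z^2+Q)$. This product must have one double root and two simple roots. Since $A$, $Q$ have no common root, the double root lies in one factor only, so one of $z^2\mp Q$ is a perfect square. This leaves a one-parameter family up to scaling, which I would solve explicitly and then compute the cross-ratio of the four ends. I expect it to come out harmonic, giving $\{\infty,\pm i,0\}$ and hence (1).

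\textbf{Case 2.} Place the triple zero at $0$ and the simple zero at $\infty$ (so $\deg P=3$). Then $P=z^3$, and we need $z^3-Q^2 = c(z-\alpha)^3(z-\beta)$, or the analogous identity after moving points. I would solve by comparing coefficients, or equivalently by requiring that $G'$ vanish only at $0$, at $\alpha$, and at the roots of $Q$, i.e.\ $G'$ has the form $z^2(z-\alpha)^2/Q^3$ up to constants. This yields a rigid solution, determined up to M\"obius. I would then compute its cross-ratio and match it with that of $\{\infty,i,-i,t\}$ to obtain $t=-17i$, which gives (2).

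The main obstacle is this last computation. It is an explicit but messy polynomial system, and the final identification of $t$ requires choosing the right ordering of points among the six cross-ratio values.
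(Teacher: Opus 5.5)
Your proposal is correct, but it takes a different route from the paper. The paper does not use monodromy. In each case it sends the two double poles $q_1,q_2$ to $0,\infty$ and writes $\tilde G$ with the prescribed zeros. It then fixes the parameters by requiring $\tilde G'$ to vanish at the multiple $1$-points, together with Vieta's formulas, and computes the cross-ratio of the ends by hand. Case 3 is ruled out by the Vieta inconsistency $a-1=a+3$.

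Your preliminary Riemann--Hurwitz count shows that $G$ is unramified outside $G^{-1}(\{0,1,\infty\})$, so $G$ is a Belyi map. This explains in advance why each case is rigid, and it turns Case 3 into the computation-free $V_4$ argument. Normalizing the ends instead of the poles, together with the factorization $(Q-A)(Q+A)=cC^2$ in Case 4, makes Cases 1 and 4 almost immediate. The paper's route is more pedestrian but entirely elementary, and its normal forms feed directly into Lemmas \ref{lem:cross_ratio_1}, \ref{lem:cross_ratio_2} and \ref{lem:cross_ratio_4}.

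The two computations you left open do close.

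In Case 1, ``$A$, $Q$'' should read ``$z$, $Q$'', i.e.\ $Q(0)\neq 0$. Also send the double $1$-point to $\infty$. Then $G(\infty)=1$ (after replacing $Q$ by $-Q$ if necessary) and $\deg\bigl((z^2-Q)(z^2+Q)\bigr)\le 2$ force $Q=z^2+d$. So the ends are $0,\infty,\pm\sqrt{-d/2}$, a harmonic quadruple. In your own normalization, the one-parameter family $Q_c=z^2-c(z-1)^2$ is a single orbit of the M\"obius maps $M_\mu(w)=w/(\mu+(1-\mu)w)$ fixing $0$ and $1$; one checks $G_c\circ M_\mu=G_{c\mu^2}$. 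The roots of $2z^2-c(z-1)^2$ satisfy $r_1+r_2=2r_1r_2$, which is exactly $[0,1;r_1,r_2]=-1$.

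In Case 2, $G'=z^2(3Q-2zQ')/Q^3$, so the triple $1$-point $\alpha$ forces $3Q-2zQ'=-a(z-\alpha)^2$. With $\alpha=1$ and $G(1)=1$ this gives $G=64z^3/(3z^2+6z-1)^2$ and $G-1=-(z-1)^3(9z-1)/(3z^2+6z-1)^2$. So the ends are $0,\infty,1,1/9$, and $[0,\infty;1,1/9]=9=[\infty,i;-i,-17i]$.

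The ``ordering'' issue you flag is harmless: a single matching ordering already yields the required M\"obius map.
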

  
  \begin{proof}
    Note that $G$ satisfies $B_G = 6$.
    Assume that such a meromorphic function $G$ satisfies the condition in each case.
    We denote the cross ratio of $z_1, z_2, z_3, z_4 \in \Chat$ by 
    $$
      [z_1, z_2 ; z_3, z_4] \coloneqq \fr{z_1 -  z_3}{z_1 - z_4} \cdot \fr{z_2 - z_4}{z_2 - z_3}. 
    $$
    \begin{enumerate}[leftmargin = *, label = (\arabic*)]
      \item
        Note that $G^{-1}(0) \subset E$ consists of one point.
        Let $\phi_1 : \Chat \to \Chat$ be a M\"obius transformation to satisfy 
        $\phi_1(q_1) = 0, \phi_1(q_2) = \infty, \phi_1(G^{-1}(0)) = \{1\}$, 
        and set $\tilde G \coloneqq G \circ \phi_1^{-1}$.
        Then, the condition in {\bf Case 1} gives 
        $$
          \ti G = C \fr{(z - 1)^4}{z^2} \qquad (C \neq 0).
        $$
        Since the equation $\ti G - 1 = 0$ has a double solution at some point $z_0\neq 0, 1, \infty$ 
        and two simple solutions, 
        by $(\ti G - 1)' (z_0) =  2C (z_0 - 1)^3(z_0 + 1) / z_0^3$, 
        we see $z_0 = -1$ because of $z_0 \neq 1$. 
        Since $\ti G(-1) = 1$, we have $C = 1/16$, and thus 
        $$
          \ti G = \left( \fr{(z - 1)^2}{4 z} \right)^2.
        $$
        
        Next, let us determine $\phi_1(E)$. 
        By direct computation, we know $\ti G^{-1}(1) = \{ -1, \lambda, 1 / \lambda\}$, 
        where $\lambda = 3 + 2 \rt 2$, and hence $\phi_1(E) = \{\pm 1, \lambda, 1 / \lambda\}$.
        Since $[\lambda, 1 / \lambda ; 1, -1] = -1 = [\infty, 0; i, -i]$, there exists a M\"obius transformation 
        $\phi_2 : \Chat \to \Chat$ such that $\phi_2(\phi_1(E)) = \{\infty, \pm i, 0\}$.
        Hence, the map $\phi_2 \circ \phi_1$ restricting to $\Chat \setminus E$ 
        gives a biholomorphic map between $\Chat \setminus E$ and 
        $\Chat \setminus \{\infty, \pm i, 0\}$.
        
      \item
        We set $G^{-1}(0) = \{r_1, r_2\} \subset E$ and may suppose $G(r_1) = 0^{(3)}$.
        Let us take a M\"obius transformation $\phi_1 : \Chat \to \Chat$ 
        to satisfy $\phi_1(q_1) = 0, \phi_1(q_2) = \infty, \phi_1(r_1) = 1$
        and set $a \coloneqq \phi_1(r_2)\  (\neq 0, 1, \infty)$ and $\ti G = G \circ \phi_1^{-1}$.
        Then, we can set $\ti G$ as 
        $$
          \ti G = C \fr{(z - 1)^3 (z - a)}{z^2} \qquad (C \neq 0).
        $$
        
        Since the polynomial $C (z - 1)^3 (z - a) - z^2$, which is the numerator of $\ti G - 1$ has a triple root at some 
        $z_0 \neq 0, 1, a$, 
        the equation $(\ti G - 1)' = \ti G' = 0$ has a double solution at $z_0$.
        \begin{equation} \label{eq:tiG_(2)}
          \ti G' = \fr{2 z ^2 + (1 - a) z - 2 a}{z ( z - 1)(z - a)} \ti G
        \end{equation}
       and $G(z_0) \neq 0$ imply that since the polynomial $2 z ^2 + (1 - a) z - 2 a$ has a double root at $z_0$, 
       it holds that 
       $$
       \left\{
        \begin{array}{l}
          1 - a = -4 z_0, \\
          - a  = z_0^2,
        \end{array}
        \right.
       $$
       and thus one observes that $a = - z_0^2 $ and $z_0$ satisfies $z_0^2 + 4 z_0 + 1 = 0$. 
       On the other hand, let $z_1 \ (\neq z_0, 0, 1, a)$ be a simple root of $C (z - 1)^3 (z - a) - z^2$.
       Then, Vieta's formula gives $z_0 \cdot z_0 \cdot z_0 \cdot z_1 = (C a)/C = a$, and we know 
       $z_1 = - 1 / z_0$. 
       Hence, we have $\phi_1(E) = \{1, z_0, - z_0^2, - 1 / z_0\}. $
       
       By using $z_0^2 + 4 z_0 + 1 = 0$,
       one can see that
       $[1, - z_0^2 ; z_0, -1 / z_0] = 9 = [\infty, i ; - i, - 17 i]$. 
       Therefore, there exists a M\"obius transformation $\phi_2 : \Chat \to \Chat$ such that 
       $\phi_2(\phi_1(E)) = \{ \infty, \pm i, - 17 i\}$, and hence 
       the map $\phi_2 \circ \phi_1$ restricting to $\Chat \setminus E$ 
       gives a biholomorphic map between $\Chat \setminus E$ and 
        $\Chat \setminus \{\infty, \pm i, - 17 i\}$.
      \item
        We denote $G^{-1}(0) = \{r_1, r_2\} \subset E$ with $G(r_1) = 0^{(3)}$.
        By the same M\"obius transformation $\phi_1$ and notation as (2), 
        we set $\ti G \coloneqq G \circ \phi_1^{-1}$, and this 
        $\ti G$ can be written as 
        $$
          \ti G = C \fr{(z - 1)^3 (z - a)}{z^2} \qquad (C \neq 0).
        $$
        Let us denote $\xi$ and $\eta$ by the roots of the polynomial 
        $P_1(z) \coloneqq 2 z^2 + (1 - a) z - 2 a$ (see \eqref{eq:tiG_(2)}).
        Then, since they are double roots of $P_1$ and 
        $P_2(z) \coloneqq C (z - 1)^3 (z - a) - z^2$, which is the numerator of $\ti G$, 
        by applying Vieta's formula to $P_2$ and $P_1$, 
        we have $\xi + \xi + \eta + \eta = 2 (\xi + \eta) = a + 3$ and  
        $\xi + \eta = (1/2)(a - 1)$, respectively.
        These two equations yield $a - 1 = a + 3$, 
        which is a contradiction, and we get the assertion. 
        
      \item
        We set $G^{-1}(0) = \{p_1, p_2\} \subset E$, which satisfies $G(p_1) = G(p_2) = 0^{(2)}$.
        Take a M\"obius transformation $\phi_1 : \Chat \to \Chat$ to satisfy 
        $\phi_1(q_1) = 0, \phi_1(q_2) = \infty, \phi_1(p_1) = r_1$ for some $r_1 \neq 0$, 
        and let $r_2 \coloneqq \phi_1(p_2)$.
        Then, we can write $(G \circ \phi_1^{-1}) = H(z)^2$, where 
        $$
          H(z) \coloneqq \rt C \ \fr{(z - r_1) (z - r_2)}{z}  \qquad (C \neq 0).
        $$
        Setting $(G \circ \phi_1^{-1})^{-1}(1) = \{y_1, y_2\} \subset \phi_1(E)$, 
        we know $(G \circ \phi_1^{-1})(y_j) = H(y_j)^2 = 1$ and $(G \circ \phi_1^{-1})'(y_j) = 0$ $(j = 1, 2).$
        Since $(G \circ \phi_1^{-1}) ' (y_j) = 2 H(y_j) H'(y_j)$, $H(y_j) \neq 0$,  and $H'(z) = \rt c (z^2 - r_1 r_2) / z^2$, 
        we have $\{y_1, y_2\} = \{\pm \rt{r_1 r_2}\}$.
        By $H(\pm \rt{r_1 r_2}) = \pm \rt c (2 \rt{r_1 r_2} \mp (r_1 + r_2))$, where the signs correspond, 
        and $H(\pm \rt{r_1 r_2}) \in \{\pm 1\}$, it holds that 
        $H(\rt{r_1 r_2}) + H(- \rt{r_1 r_2}) = -2 \rt c (r_1 + r_2) = 0$.
        Thus, one see $r_2 = -r_1$, and hence 
        $$
          G \circ \phi_1^{-1} = \left(\rt C \ \fr{z^2 - r_1^2}{z} \right)^2.
        $$
        Through changing coordinate $z = \psi_1(w) = r_1 w$, we set 
        $$
          \ti G (w) \coloneqq (G \circ \phi_1^{-1})(\psi_1(w)) = K \fr{(w^2 - 1)^2}{w^2} 
          \qquad (K \coloneqq C r_1^4).
        $$  
        
       The polynomial $K (w^2 - 1)^2 - w^2 = K w^4 - (2 K + 1) w^2 + K$, 
       which is the numerator of $\ti G - 1$, 
        has two non-zero double roots.
        Set $w^2 = X$, and then $K X^2 - (2 K + 1) X + K$ must have 
        one non-zero double root.
        The discriminant $(2K + 1)^2 - 4 K ^2 = 4 K + 1$ of 
        this polynomial must be $0$, namely $K = -1 / 4$. 
        Thus, $\ti G$ can be written as 
        $$
          \ti G = - \fr{(w^2 - 1)^2}{4 w^2}, 
        $$
        which satisfies $\ti G^{-1}(1) = \{\pm i\}$,
        and hence $(\psi_1^{-1} \circ \phi_1)^{-1} (E) = \{\pm 1, \pm i\}$.
        Direct computation gives $[1, -1 ; i, - i] = -1 = [\infty, 0 ; i, -i]$, and these arguments so far imply that 
        there exists a biholomorphic map $\rho : \Chat \to  \Chat$ such that $\rho(E) = \{\infty, \pm i, 0\}$,
        which yields the assertion.
    \end{enumerate}
  \end{proof}
  
   Let us consider {\bf Case 1} and determine the form of the meromorphic function 
   $G : \Chat \to \Chat$. 
   First, we prepare a lemma.
  
  \begin{lemma} \label{lem:cross_ratio_1}
    Let $\{p_0, p_1, s_1, s_2\} \subset\Chat$ be distinct points.
    Assume that there exists a meromorphic function $G : \Chat \to \Chat$ such that 
    $\deg G = 4$, and it satisfies the following conditions:
    \begin{enumerate}[label = (\arabic*), leftmargin = *, font = \normalfont]
      \item 
        $G^{-1}(0) = \{p_0\}$, and $G^{-1}(1) = \{p_1, s_1, s_2\}$,
      \item
        $G(p_0) = 0^{(4)}, G(p_1) = 1^{(2)},$ and $G(s_1) = G(s_2) = 1^{(1)}$,
      \item
        the value $\infty$ is the unique totally ramified value, and $G(q_1) = G(q_2) = \infty^{(2)}$, 
        where $G^{-1}(\infty) = \{q_1, q_2\}$.
   \end{enumerate}
   Then, it must hold that $[p_0, p_1 ; s_1, s_2] = - 1$.
  \end{lemma}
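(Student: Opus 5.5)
The plan is to reduce to the normal form already obtained in the proof of Lemma~\ref{lem:structure_of_source}~\ref{sphere_structure_1} and then evaluate the cross ratio. That evaluation is legitimate because the cross ratio is invariant under simultaneous M\"obius transformations, as I use at the end.

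First, choose a M\"obius transformation $\phi_1$ with $\phi_1(q_1)=0$, $\phi_1(q_2)=\infty$ and $\phi_1(p_0)=1$. This is possible because $p_0,q_1,q_2$ are distinct: they lie in the disjoint fibres $G^{-1}(0)$ and $G^{-1}(\infty)$, and $q_1\neq q_2$. Set $\ti G\coloneqq G\circ\phi_1^{-1}$. By conditions (1)--(3), $\ti G$ has a zero of order $4$ at $1$ and poles of order $2$ at $0$ and $\infty$, and no other zeros or poles since $\deg\ti G=4$. Hence
$$
\ti G = C\,\fr{(z-1)^4}{z^2}\qquad (C\neq 0).
$$

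Second, pin down $C$ and the fibre over $1$, exactly as in Lemma~\ref{lem:structure_of_source}. By condition (2), $\ti G-1$ has a double zero at $z_0\coloneqq\phi_1(p_1)\notin\{0,1,\infty\}$, so $\ti G'(z_0)=0$. Since
$$
\ti G'(z)=2C\,\fr{(z-1)^3(z+1)}{z^3},
$$
this forces $z_0=-1$, and $\ti G(-1)=1$ then gives $C=1/16$. Thus
$$
\ti G=\left(\fr{(z-1)^2}{4z}\right)^2 .
$$
Solving $(z-1)^2=-4z$ gives the double root $-1$. Solving $(z-1)^2=4z$ gives the simple roots $\lambda=3+2\rt2$ and $1/\lambda$. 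Hence $\phi_1(p_1)=-1$ and $\{\phi_1(s_1),\phi_1(s_2)\}=\{\lambda,1/\lambda\}$.

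Finally, by M\"obius invariance of the cross ratio,
$$
[p_0,p_1;s_1,s_2]=[1,-1;\lambda,1/\lambda]
\quad\text{or}\quad
[1,-1;1/\lambda,\lambda].
$$
Swapping the last two entries inverts the cross ratio, so it suffices to show that one of these equals $-1$. A direct computation gives
$$
[1,-1;\lambda,1/\lambda]=\fr{1-\lambda}{1-1/\lambda}\cdot\fr{-1-1/\lambda}{-1-\lambda}=(-\lambda)\cdot\fr1\lambda=-1,
$$
which is what we want.

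There is no real obstacle here; the computation is essentially already in the proof of Lemma~\ref{lem:structure_of_source}. The only points needing care are the identification of the double point of $\ti G-1$ with $\phi_1(p_1)$, and the observation that the ordering of $s_1,s_2$ is irrelevant because $-1$ is its own inverse.
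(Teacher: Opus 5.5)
Your proof is correct. The key steps all hold: $\tilde G'=2C(z-1)^3(z+1)/z^3$ forces $\phi_1(p_1)=-1$ and $C=1/16$, the simple points of the fibre over $1$ are $3\pm 2\sqrt2$, and $[1,-1;\lambda,1/\lambda]=-1$ then gives the claim by M\"obius invariance.

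Your route differs from the paper's proof of this lemma only in the choice of normalization. The paper sends $p_0\mapsto\infty$ and $p_1\mapsto 0$, so that $\tilde G=1/\bigl(C(z-\alpha)^2(z-\beta)^2\bigr)$. The critical-point condition $\tilde G'(0)=0$ then gives $\beta=-\alpha$, so $\tilde G$ is even. Hence the two simple preimages of $1$ are $\pm\xi$, and $[\infty,0;\xi,-\xi]=-1$ follows from this symmetry without ever solving for $\xi$.

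Your normalization ($q_1\mapsto 0$, $q_2\mapsto\infty$, $p_0\mapsto 1$) requires finding the roots explicitly. In exchange, it is exactly the computation in the proof of Lemma~\ref{lem:structure_of_source}~\ref{sphere_structure_1}. There the paper obtains $[\lambda,1/\lambda;1,-1]=-1$, which equals your $[1,-1;\lambda,1/\lambda]$ by the symmetry $[z_1,z_2;z_3,z_4]=[z_3,z_4;z_1,z_2]$. Your version therefore makes visible that this cross-ratio identity was already established inside that earlier proof.
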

  
  \begin{proof}
    Let $\phi : \Chat \to \Chat$ be a M\"obius transformation satisfying 
    $\phi(p_0) = \infty$ and $\phi(p_1) = 0$, and set $\ti G \coloneqq G \circ \phi^{-1}$.
    Let $\ti s_1 \coloneqq \phi(s_1), \ti s_2 \coloneqq \phi(s_2), \al \coloneqq \phi(q_1),$ and 
    $\bt \coloneqq \phi(q_2)\ (\al \neq \bt).$
    We only have to show 
    $[p_0, p_1 ; s_1, s_2]= [\infty, 0 ;  \ti s_1, \ti s_2] = -1$, and 
    so we can set 
    $$
      \ti G = \fr{1}{P(z)} \qquad (P(z) \coloneqq C (z - \al)^2 (z - \bt )^2, \quad C \neq 0).
    $$
    Since $(\ti G - 1)' (0) = \ti G' (0) = 0$, by
    $$
      P'(z) = \left( \fr{2}{z - \al} + \fr{2}{z - \bt} \right) P(z),
    $$ 
    we have $\bt = -\al$, and $\ti G (0) = 1$ yields $C = 1 / \al^4$.
    So, we can set $P(z) = (k z^2 - 1)^2$, where $k \coloneqq 1 / \al^2$.
    One can observe $\{\ti s_1, \ti s_2\} = \{\xi, - \xi\}$ for some $\xi \neq 0$ 
    by solving an equation $\ti G = 1$.
    Hence, we get $[\infty, 0 ; \ti s_1, \ti s_2] = -1$, which completes the proof.
  \end{proof}
  
  Let us construct the meromorphic function $G$.
  Without loss of generality, we may set $\Sigma = \Chat \setminus \{\infty, \pm i, 0\}$
   by \ref{sphere_structure_1} in Lemma \ref{lem:structure_of_source} and suppose $G(\infty) = 0^{(4)}$
  by  coordinate changes fixing the set $\{\infty, \pm i, 0\}$.
  Then, the possible values and multiplicities of $G$ at the other ends are as follows:
  (i) $G(0) = 1^{(2)}$, and $G(i) = G(-i) = 1^{(1)}$ or (ii) $G(i) = 1^{(2)}$, and $G(- i) = G(0) = 1^{(1)}$ or
  (iii) $G(- i) = 1^{(2)}$, and $G(0) = G(i) = 1^{(1)}$.
  On the other hand, calculating the cross ratio in each case of (i) through (iii), 
  we know (i) $[\infty, 0 ; i, -i] = -1$, (ii) $[\infty, i; -i, 0] = 1/2$, and (iii) $[\infty, -i; 0, i] = 2$.
  From Lemma \ref{lem:cross_ratio_1}, the cases (ii) and (iii) are impossible.
  It follows that we only consider the case (i).
  
  Recalling \eqref{eq:asum_G} and the arguments there, we can set 
   \begin{equation} \label{eq:G_setting}
      	G(z) = \fr{N(z)}{P(z)^2},
   \end{equation}
   where $N$ and $P$ are polynomial functions.
   We denote 
   $$
   	P(z) \coloneqq z^2 - s_1 z + s_2 \qquad (s_1 \coloneqq q_1 + q_2, \quad s_2 \coloneqq q_1 q_2),
   $$
   which satisfies $P(q_j) = 0$, $P'(q_j) \neq 0\ (j = 1, 2)$, and $P'' \equiv 2$.
   In the sequel, we are to determine $N, s_1$, and $s_2$.
   
   Since $G(\infty) = 0^{(4)}$, we can set 
	$$
	   G = \fr{\lambda}{P(z)^2} \qquad (\lambda \in \C \setminus \{0\}).
	$$ 
	The condition $(G - 1)(0) = (G - 1)(\pm i) = 0$ and $G(0) = 1^{(2)}$ imply
	\begin{align*}
		&(G - 1)(0) = \fr{\lambda - P(0)^2}{P(0)^2} = 0,
		\qquad
		(G - 1)(\pm i) =  \fr{\lambda -  P(\pm i)^2}{P(\pm i)^2} = 0,\\
		&(G - 1)'(0) = - \fr{2 \lambda P'(0)}{P(0)^3} = 0.
	\end{align*}
	The third equation yields 
	$
		s_1 = 0.
	$
	In turn, the first and second equation give $\lambda = P(0)^2 = s_2^2 = P(\pm i)^2$.
	Note that $P(i) = - 1 + s_2 = P(-i)$.
	Thus, $P(0)^2 = P(i)^2$ gives $s_2 = 1/2$ and $\lambda = 1/4$.
	Accordingly, we obtain 
	\begin{equation} \label{eq:G_1-1-1}
	   G = \fr{1}{(2z^2 +1)^2}. 
	\end{equation}
  
   Next, let us investigate {\bf Case 2} by the same way as {\bf Case 1}.
  \begin{lemma} \label{lem:cross_ratio_2}
    Let $\{p_3, p_1, s_3, s_1\} \subset\Chat$ be distinct points,
    and assume that a meromorphic function $G : \Chat \to \Chat$ satisfy 
    $\deg G = 4$, and the following conditions:
    \begin{enumerate}[label = (\arabic*), leftmargin = *, font = \normalfont]
      \item 
        $G^{-1}(0) = \{p_3, p_1\}$, and $G^{-1}(1) = \{s_3, s_1\}$,
      \item
        $G(p_3) = 0^{(3)}, G(p_1) = 0^{(1)}, G(s_3) = 1^{(3)}, $ and $G(s_1) = 1^{(1)}$,
      \item
        the value $\infty$ is the unique totally ramified value, and $G(q_1) = G(q_2) = \infty^{(2)}$, 
        where $G^{-1}(\infty) = \{q_1, q_2\}$.
   \end{enumerate}
   Then, it must hold that $[p_3, p_1 ; s_3, s_1] = 9$.
  \end{lemma}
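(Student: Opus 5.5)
The plan is to redo, for a general configuration of four points, the normalisation already used in part (2) of Lemma \ref{lem:structure_of_source}. The key fact is that the cross ratio $[z_1, z_2 ; z_3, z_4]$ is invariant under M\"obius transformations. So it suffices to compute it after sending the points to convenient positions.

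\emph{Step 1 (normal form).} Take a M\"obius transformation $\phi$ with $\phi(q_1) = 0$, $\phi(q_2) = \infty$, $\phi(p_3) = 1$, and set $a \coloneqq \phi(p_1) \notin \{0, 1, \infty\}$ and $\ti G \coloneqq G \circ \phi^{-1}$. By conditions (1)--(3) and $\deg G = 4$, the zeros of $\ti G$ are $1$ (order $3$) and $a$ (order $1$), and its poles are $0$ and $\infty$, each of order $2$. Hence
$$
  \ti G = C \fr{(z - 1)^3 (z - a)}{z^2} \qquad (C \neq 0).
$$

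\emph{Step 2 (the triple point over $1$).} Put $z_0 \coloneqq \phi(s_3)$. Since $\ti G(z_0) = 1^{(3)}$, the derivative $\ti G'$ vanishes to order exactly $2$ at $z_0$. The logarithmic derivative gives
$$
  \ti G' = \fr{2z^2 + (1 - a) z - 2a}{z (z - 1)(z - a)}\, \ti G,
$$
and $\ti G(z_0) = 1 \neq 0$. So $z_0$ must be a double root of the quadratic $2z^2 + (1 - a) z - 2a$. Comparing coefficients with $2(z - z_0)^2$ gives
$$
  1 - a = -4 z_0, \qquad a = -z_0^2,
$$
and therefore $z_0^2 + 4 z_0 + 1 = 0$. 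In particular $z_0 \neq 0, \pm 1$.

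\emph{Step 3 (the simple point over $1$).} Let $z_1 \coloneqq \phi(s_1)$. It is the remaining root of the quartic numerator $C(z - 1)^3 (z - a) - z^2$ of $\ti G - 1$. Vieta's formula for the product of the roots gives $z_0^3 z_1 = a = -z_0^2$, so $z_1 = -1/z_0$.

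\emph{Step 4 (the cross ratio).} By Möbius invariance,
$$
  [p_3, p_1 ; s_3, s_1] = [1, -z_0^2 ; z_0, -1/z_0].
$$
I would reduce this using $z_0^2 = -4z_0 - 1$. This gives $(z_0 + 1)^2 = -2z_0$ and $z_0^3 = 15 z_0 + 4$. Then
$$
  (1 - z_0)(1 - z_0^3) = 18 z_0^2,
$$
and the cross ratio collapses to $\dfrac{-18 z_0^2}{-2 z_0^2} = 9$.

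The only delicate point is Step 2: justifying that a triple point of $\ti G - 1$ forces a double root of the quadratic factor. This relies on $z_0 \notin \{0, 1, a\}$, which holds because the points $p_3, p_1, s_3, s_1, q_1, q_2$ are all distinct. The final simplification in Step 4 is routine algebra.
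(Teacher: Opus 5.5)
Your proof is correct and follows essentially the same route as the paper. Both arguments normalize $q_1, q_2, p_3$ to $0, \infty, 1$ and use the logarithmic derivative to force $\phi(s_3)$ to be a double root of $2z^2+(1-a)z-2a$, then apply Vieta to the quartic numerator of $G-1$ to locate $\phi(s_1)$. The differences are cosmetic: you parametrize by $z_0$ and use the product of the roots, as in part (2) of Lemma \ref{lem:structure_of_source}, whereas the paper parametrizes by $a$ (with $a^2+14a+1=0$ and $w_3=(a-1)/4$) and uses the sum of the roots to get $w_1=(a+15)/4$.
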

  
  \begin{proof}
    Let $\phi : \Chat \to \Chat$ be a M\"obius transformation satisfying 
    $\phi(q_1) = 0, \phi(q_2) = \infty$, and $\phi(p_3) = 1$,  and set $\ti G \coloneqq G \circ \phi^{-1}$.
    Let $a \coloneqq \phi(p_1), w_3 \coloneqq \phi(s_3),$ and $w_1 \coloneqq \phi(s_1)$.
    Since $[p_3, p_1 ; s_3, s_1] = [1, a ;  w_3, w_1]$, we only have to show 
    $[1, a ;  w_3, w_1] = 9$. 
    By assumption, we can set 
    $$
      \ti G = K \fr{(z - 1)^3 (z - a)}{z^2} \qquad (K\neq 0).
    $$
    The condition $\ti G(w_3) = 1^{(3)}$ yields $(\ti G - 1)'(w_3) = \ti G' (w_3) = 0^{(2)}$.
    Since
    $$
      \ti G' = \fr{2z^2 + (1 - a) z - 2 a}{z (z - 1) (z - a)} \,  \ti G,
    $$
    the point $w_3$ is the double root of the polynomial $2z^2 + (1 - a) z - 2 a$ 
    because of $\ti G(w_3) \neq 0$.
    Thus, the points $w_3$ and $a$ satisfy $(1 - a)^2 - 4 \cdot 2 \cdot (- 2 a) = a^2 + 14 a  + 1 = 0$
    and $w_3 = (a - 1) / 4$.
    
    Additionally, applying Vieta's formula to the polynomial $K(z - 1)^3(z - a) - z^2$, 
    which is the numerator of $\ti G - 1$, we get $w_1 = (a + 15) / 4.$ 
    As a consequence, by noting $a^2 = - 14 a - 1$, we obtain the assertion as follows:
    $$
      [p_3, p_1 ; s_3, s_1]  = [1, a ; w_3, w_1] 
       = \fr{3(a^2 - 10 a + 25)}{3 a^2 + 34 a +11} = \fr{72 (1 - a)}{8 (1 - a)} = 9. 
    $$    
  \end{proof}
  
  Let us determine the form of the meromorphic function $G$ satisfying the condition in
  {\bf Case 2}.
  We may set $\Sigma = \Chat \setminus \{\infty, \pm i, - 17 i\}$
   by \ref{sphere_structure_2} in Lemma \ref{lem:structure_of_source} and suppose $G(\infty) = 0^{(3)}$.
  Then, the possible values and multiplicities of $G$ at the other ends are following $6$ cases:
   \begin{align*}
   & \ \quad  \text{(i)}& G(i) = 0^{(1)}, \quad G(- i) = 1^{(3)},  \quad &\text{ and } \quad G(- 17 i) = 1^{(1)} \\
   & \text{or (ii)} & G(i) = 0^{(1)},\quad  G(- 17 i) = 1^{(3)}, \quad &\text{ and } \quad G(- i) = 1^{(1)} \\
   & \text{or (iii)} & G(- i) = 0^{(1)}, \quad G(i) = 1^{(3)}, \quad &\text{ and } \quad G(-17 i) = 1^{(1)} \\
   & \text{or (iv)} & G(- i) = 0^{(1)}, \quad G(- 17 i) = 1^{(3)}, \quad &\text{ and } \quad G(i) = 1^{(1)} \\
   & \text{or (v)} & G(- 17 i) = 0^{(1)}, \quad G(i) = 1^{(3)}, \quad &\text{ and } \quad G(- i) = 1^{(1)} \\
   & \text{or (vi)} & G(- 17 i) = 0^{(1)}, \quad G(- i) = 1^{(3)}, \quad &\text{ and } \quad G(i) = 1^{(1)}.
   \end{align*}
  The cross ratios in each case of (i) through (vi) are
  (i) $[\infty, i ; - i, - 17 i] = 9$, (ii) $[\infty, i; - 17 i, - i] = 1 / 9$, (iii) $[\infty, -i; i, -17 i] = - 8$,
  (iv) $[\infty, -i; -17 i, i] = -1 / 8$, (v) $[\infty, -17 i; i, - i] = 8 / 9$, and (vi) $[\infty, - 17 i; - i, i] = 9 / 8$.
  By Lemma \ref{lem:cross_ratio_2}, the cases (ii) through (vi) are impossible.
  Thus, we only have to consider the case (i).
  
  We use the same notation as \eqref{eq:G_setting}, and let
   $$
   	F(z) \coloneqq N(z) - P(z)^2.
   $$
   Since $G(\infty) = 0^{(3)}$ and $G(i) = 0^{(1)}$, we can set
    $$
      G = \fr{\lambda (z - i)}{P(z)^2} \qquad (\lambda \neq 0).
   $$
    Also, since $G(- i) =  1^{(3)}$, we obtain $F(-i) = F'(-i) = F''(-i) = 0$, and these are equivalent to 
   $$
       P(-i)^2 = -2i \lambda, \qquad \lambda = 2P(-i) P'(-i), \qquad
       2P(-i) = -P'(-i)^2. 
   $$
   These equations imply 
   $$
     \lambda = 512 i, \qquad s_1 = -10 i, \qquad s_2 = 23,
   $$
   and thus we get 
   \begin{equation} \label{eq:G_2-1-1}
	G = \fr{512i(z - i)}{(z^2 + 10iz + 23)^2},
    \end{equation}
    which satisfies $G(- 17 i) = 1^{(1)}$.

   Finally, let us consider {\bf Case 4}.
      
  \begin{lemma} \label{lem:cross_ratio_4}
    Let $\{p_1, p_2, s_1, s_2\} \subset\Chat$ be distinct points, and assume that
    a meromorphic function $G : \Chat \to \Chat$ satisfy
    $\deg G = 4$, and the following conditions:
    \begin{enumerate}[label = (\arabic*), leftmargin = *, font = \normalfont]
      \item 
        $G^{-1}(0) = \{p_1, p_2\}$, and $G^{-1}(1) = \{s_1, s_2\}$,
      \item
        $G(p_1) = G(p_2) =  0^{(2)},$ and  $G(s_1) = G(s_2) = 1^{(2)}$,
      \item
        the value $\infty$ is the unique totally ramified value, and $G(q_1) = G(q_2) = \infty^{(2)}$, 
        where $G^{-1}(\infty) = \{q_1, q_2\}$.
   \end{enumerate}
   Then, it must hold that $[p_1, p_2 ; s_1, s_2] = -1$.
  \end{lemma}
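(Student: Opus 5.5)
We follow the scheme of Lemmas \ref{lem:cross_ratio_1} and \ref{lem:cross_ratio_2}, and in fact reuse the computation already carried out in part \ref{sphere_structure_4} of Lemma \ref{lem:structure_of_source}. The cross ratio is invariant under M\"obius transformations, so we normalize the poles first.

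Choose a M\"obius transformation $\phi$ with $\phi(q_1) = 0$ and $\phi(q_2) = \infty$, and set $\ti G \coloneqq G \circ \phi^{-1}$, $r_j \coloneqq \phi(p_j)$ and $y_j \coloneqq \phi(s_j)$. Since $\phi(p_1) = r_1 \neq 0$, condition (3) together with $\deg G = 4$ and condition (2) at the zeros gives
$$
 \ti G = H^2, \qquad H(z) \coloneqq \rt C \, \fr{(z - r_1)(z - r_2)}{z} \qquad (C \neq 0).
$$
The points $y_j$ are double roots of $\ti G - 1$. Hence $\ti G'(y_j) = 2 H(y_j) H'(y_j) = 0$, and since $H(y_j) \neq 0$ we get $H'(y_j) = 0$. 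From $H' = \rt C (z^2 - r_1 r_2)/z^2$ this gives $\{y_1, y_2\} = \{\pm \rt{r_1 r_2}\}$.

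We also have $H(y_j)^2 = 1$, so $H(y_j) = \pm 1$. Both signs must occur: if $H(y_1) = H(y_2) = \epsilon$, then $H - \epsilon$ would have two double zeros, which is impossible because $H$ has degree $2$. So $H(\rt{r_1 r_2}) + H(-\rt{r_1 r_2}) = 0$. The left-hand side equals $-2\rt C (r_1 + r_2)$, so $r_2 = -r_1$. This sign step is the only point needing care; the rest is bookkeeping. It then follows that $\{y_1, y_2\} = \{\pm i r_1\}$, up to labeling.

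It remains to compute
$$
 [p_1, p_2 ; s_1, s_2] = [r_1, -r_1 ; i r_1, -i r_1] = [1, -1 ; i, -i]
$$
after scaling by $1/r_1$. A direct calculation gives
$$
 \fr{1 - i}{1 + i} \cdot \fr{-1 + i}{-1 - i} = (-i)(-i) = -1.
$$
Swapping $s_1$ and $s_2$ replaces the value by its reciprocal, which is still $-1$, so the conclusion holds for either labeling.
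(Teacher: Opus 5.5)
Your argument is correct and is essentially the paper's proof. Both normalize $q_1\mapsto 0$ and $q_2\mapsto\infty$, locate the double roots of $\ti G-1$ at $\pm\sqrt{r_1r_2}$, and deduce that the zeros are symmetric ($r_2=-r_1$, i.e.\ $a=-1$ after the paper's extra normalization $p_1\mapsto 1$). Both then evaluate $[1,-1;i,-i]=-1$.

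The only difference is how you get $r_2=-r_1$. You use the square root $H$ and the fact that a degree-$2$ map takes distinct values at its two critical points. This is the argument of part (4) of Lemma~\ref{lem:structure_of_source}, and you make its implicit sign step explicit. The paper's proof of this lemma instead imposes $\ti G(\pm\sqrt a)=1$ directly and solves for $a$.
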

  
  \begin{proof}
    Let $\phi : \Chat \to \Chat$ be a M\"obius transformation satisfying 
    $\phi(q_1) = 0, \phi(q_2) = \infty$, and $\phi(p_1) = 1$,  
    and set $\ti G \coloneqq G \circ \phi^{-1}$.
    Let $a \coloneqq \phi(p_2), w_1 \coloneqq \phi(s_1),$ and $w_2 \coloneqq \phi(s_2)$.
    Since $[p_1, p_2 ; s_1, s_2] = [1, a ;  w_1, w_2]$, let us show 
    $[1, a ;  w_1, w_2] = -1$. 
    We may set 
    $$
      \ti G = K \fr{(z - 1)^2 (z - a)^2}{z^2} = K \left(\fr{(z - 1) (z - a)}{z}\right)^2 \qquad (K\neq 0).
    $$
    Since $\ti G(w_j) = 1^{(2)}$ ($j = 1, 2$), and thus $(\ti G - 1)'(w_j) = \ti G' (w_j) = 0,$
    direct computation gives 
    $\{w_1, w_2\} = \{\rt a, - \rt a\}$.
    The condition $\ti G(w_j) = 1$ yields $- K (\rt a - 1)^4 = 1$ and $K (\rt a + 1)^4 = 1$,
    and hence $a = -1$. 
    As a result, we have $\{w_1, w_2\} = \{i, -i\}$.
    Therefore, we get the assertion as follows:
    $$
      [1, a; w_1, w_2] = [1, -1; i, -i] = [1, -1; -i, i] = -1.
    $$
  \end{proof}
  Thus, we may set $\Sigma = \Chat \setminus \{\infty, \pm i, 0\}$ in {\bf Case 4}
   by \ref{sphere_structure_4} in Lemma \ref{lem:structure_of_source} and suppose $G(\infty) = 0^{(2)}$.
  Then, the possible values and multiplicities of $G$ at the other ends are following $3$ cases:
  (i) $G(0) = 0^{(2)}$, and $G(i) = G(- i) = 1^{(2)}$ or 
  (ii) $G(i) = 0^{(2)}$, and $G(- i) = G(0) = 1^{(2)}$ or 
  (iii) $G(- i) = 0^{(2)}$, and $G(0) = G(i) = 1^{(2)}$.
  In each case of (i) through (iii), the cross ratio can be computed as follows, as in Lemma \ref{lem:cross_ratio_4}:
  (i) $[\infty, 0 ; i, -i] = -1$, (ii) $[\infty, i ; - i, 0] = 1 / 2$, (iii) $[\infty, - i ; 0, i] = 2$.
  Hence, the cases (ii) and (iii) are impossible.
  
  Next, let us construct the meromorphic function $G$ satisfying the condition of (i).
  Since $G(\infty) = G(0) = 0^{(2)}$, by using the same notation as \eqref{eq:G_setting}, we can set 
  $$
    G = \fr{\lambda\, z^2}{P(z)^2} \qquad (\lambda \in \C \setminus \{0\}). 
  $$
  By $G(i) =  G(- i) = 1^{(2)}$, $h(\pm i) = h'(\pm i)  = 0$ holds, where $h(z) \coloneqq z^2 - P(z)^2$,
  and it implies
  $$
    P(i)^2 =  - \lambda, \quad P(0) P'(0) = - i \lambda , \quad 
   P(-i)^2 =  - \lambda, \quad P(-i) P'(-i) = - i \lambda.
  $$ 
   Noting $\lambda \neq 0$, we have 
   $\lambda = - 4, s_1 = 0,$ and $s_2 = - 1, $
   which give 
   \begin{equation} \label{eq:G_4-2}
      G = -\fr{4 z^2}{(z^2 - 1)^2}. 
    \end{equation}
 
  Summing up the argument so far, the following lemma is obtained.
 \begin{lemma} \label{lem:construction_G}
   Let $G : \Sigma = \Chat \setminus \{\infty, \pm i, t\} \to \Chat$ be a meromorphic function.
   Suppose that  
   it omits $0$ and $1$, has totally ramified value $\infty$ other than omitted values,
   can be extended meromorphically to $\Chat$, and $\deg G = 4$.
   Then, its $G : \Sigma \to \Chat$ can be determined one of \eqref{eq:G_1-1-1} defined on 
   $\Chat \setminus \{\infty, \pm i, 0\}$, \eqref{eq:G_2-1-1} defined on $\Chat \setminus \{\infty, \pm i, - 17 i\}$, 
   or \eqref{eq:G_4-2} defined on $\Chat \setminus \{\infty, \pm i, 0\}$.
 \end{lemma}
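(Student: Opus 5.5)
The plan is to assemble the case analysis already carried out in this section. Its overall structure is as follows. Since $\deg G = 4$, $G$ omits exactly $0$ and $1$ on $\Sigma$, and the four ends are the only preimages of $\{0,1\}$, the divisors $G^{-1}(0)$ and $G^{-1}(1)$ are partitions of $4$ supported on disjoint subsets of $E=\{\infty,\pm i,t\}$. The total weight condition forces $\infty$ to have fiber $\{q_1,q_2\}$ with both multiplicities $2$, as in \eqref{eq:asum_G}. This fiber contributes $2$ to $B_G=6$, so the fibers over $0$ and $1$ together contribute at most $4$ to the branching. The number of points in these two fibers is therefore at least $8-4=4$. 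On the other hand it is at most $|E|=4$, since the fibers are disjoint. Hence the two fibers contain exactly four points and cover $E$, and all remaining branching is absorbed by them.

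The next step is to list the pairs of partitions of $4$ with total length $4$, up to swapping $0$ and $1$ (which corresponds to composing with $w\mapsto 1-w$). These are $[4]/[2,1,1]$, $[3,1]/[3,1]$, $[3,1]/[2,2]$ and $[2,2]/[2,2]$; this is exactly \textbf{Case 1} through \textbf{Case 4}. The pair $[4]/[1,1,1,1]$ would need five ends, so it cannot occur. I will note explicitly that reducing by the swap $0\leftrightarrow1$ only changes $G$ to $1-G$, so the resulting list is understood up to this normalization together with Möbius changes of the source. Then I apply Lemma \ref{lem:structure_of_source}: \textbf{Case 3} is excluded, and in the remaining cases $\Sigma$ is biholomorphic to $\Chat\setminus\{\infty,\pm i,0\}$ or $\Chat\setminus\{\infty,\pm i,-17i\}$. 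This lets us replace $t$ by $0$ or $-17i$ after a Möbius change of coordinates.

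Within each surviving case, I normalize by a Möbius map that permutes the ends so that the point of highest multiplicity over $0$ sits at $\infty$. The competing assignments of the remaining ends to the fibers are then ruled out by the cross-ratio constraints of Lemmas \ref{lem:cross_ratio_1}, \ref{lem:cross_ratio_2} and \ref{lem:cross_ratio_4}, which fix the cross ratio at $-1$, $9$ and $-1$ respectively. Exactly one assignment survives in each case. Writing $G=N/P^2$ with $P=z^2-s_1z+s_2$ as in \eqref{eq:G_setting}, the interpolation conditions then determine $\lambda,s_1,s_2$ uniquely, giving \eqref{eq:G_1-1-1}, \eqref{eq:G_2-1-1} and \eqref{eq:G_4-2}.

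The main obstacle is bookkeeping rather than computation. First, I have to justify that fixing the normalization (for example $G(\infty)=0^{(4)}$) is possible by coordinate changes preserving the end set. Automorphisms of $\{\infty,\pm i,0\}$ act transitively enough to do this because the cross ratio is harmonic. For $\{\infty,\pm i,-17i\}$, we can choose which end is the triple zero and move it to $\infty$. Second, I have to check that the discarded cross-ratio values are genuinely different from the required ones, so that no solution has been missed.
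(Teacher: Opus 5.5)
Your proposal is correct and follows the paper's own route. You enumerate the four cases, use Lemma \ref{lem:structure_of_source} to fix the end set and rule out Case 3, apply the cross-ratio Lemmas \ref{lem:cross_ratio_1}, \ref{lem:cross_ratio_2}, \ref{lem:cross_ratio_4} to select the unique admissible assignment of ends, and then solve for $\lambda, s_1, s_2$. Your Riemann--Hurwitz count justifying the case list, and your caveat that the conclusion holds only up to $G \mapsto 1-G$ (which the paper absorbs through the free labelling of $\sigma,\tau$ in $\Phi$), make explicit what the paper leaves implicit. One small correction to your normalization remark: moving a chosen end to $\infty$ while preserving the end set works for every four-point set, because the double transpositions preserve the cross ratio, so harmonicity is not needed.
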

 
  With the preparations made so far, 
  we see the existence complete minimal immersions defined on 
  the four-punctured sphere with
  $C(\Sigma) = -16 \pi, D_g = 2, $ and $\nu_g = 2.5$
  other than Kawakami--Watanabe's example as follows.
  
  \begin{theorem} \label{thm:TC16Pi_25}
    Any complete minimal immersion 
    $\Sigma \coloneqq \Chat \setminus \{\text{$4$ points}\} \to \R^3$ 
    of total curvature $-16 \pi$ with $D_g = 2$ and $\nu_g = 2.5$
    is given by the 
    following Weierstrass data $(g, \omega)$ 
    defined on $\Chat \setminus \{\infty, \pm i, 0\}$:
    for distinct values $\sigma \in \C$, $\tau, b \in \C$
     and a value $\theta \in \C \setminus \{0\}$,
    
    \noindent
    {\bf Case 1}
    		\begin{equation}
		  g =  \frac{\sigma (b - \tau)(2z^2 + 1)^2 + b (\tau - \sigma)}{(b - \tau)(2z^2 + 1)^2 + (\tau - \sigma)},
		\end{equation}
		and the holomorphic $1$-form $\omega$ on $\Sigma$ meromorphic on $\Chat$ 
		is given by either
		 
		\begin{equation} 
		  (1)\qquad 
		  \omega = \theta \dfrac{((b - \tau)(2z^2 + 1)^2 + (\tau - \sigma))^2}
		  {z^2 (z - i)^2 (z + i)^2} dz,
		\end{equation}
		with
		\begin{equation} \label{eq:per_Dg2_1_(1)}
		  \left\{
		  \begin{array}{l}
		    \Re \left( \theta (b - \sigma) 
		    (b(16 \sigma \tau - 3 \tau^2 - 13) - \sigma (13 \tau^2 + 3) + 16\tau)\right) = 0,\\
		   \Im\left( \theta (b - \sigma) 
		   ( b(16 \sigma \tau - 3 \tau^2 + 13) - \sigma (13 \tau^2 - 3) - 16\tau) \right) = 0, \\ 
		   \Re\left(  \theta (b - \sigma) 
		   (b(8 \sigma + 5  \tau) - \tau(5 \sigma + 8\tau)) \right) = 0\\ 
		  \end{array}
		  \right.
		\end{equation} 
		{\nf (}e.g. $\tau = 0, \ b = - (3 / 13) \sigma, \  \sigma^3 \in i\R,  \ \theta = 1${\nf )} or
		
		\begin{equation}
		   (2) \qquad
		     \omega = \theta \dfrac{((b - \tau)(2z^2 + 1)^2 + (\tau - \sigma))^2}
		     {z^4 (z - i)^2 (z + i)^2} dz,
		\end{equation}
		with 
		\begin{equation} \label{eq:per_Dg2_1_(2)}
		  \left\{
		  \begin{array}{l}
		   \Re \left
		   ( \theta (b - \sigma) (b(16 \sigma \tau - 5 \tau^2 - 11) - \sigma (11 \tau^2 + 5) + 16 \tau) 
		   \right) = 0, \\ 
		  \Im\left(
		   \theta (b - \sigma) (b(16 \sigma \tau - 5 \tau^2 + 11) - \sigma (11 \tau^2 - 5) - 16 \tau) 
		   \right) = 0, \\
  		  \Re \left(
  		     \theta (b - \sigma) (b(8\sigma + 3 \tau) - \tau(3 \sigma + 8\tau) )
  		    \right) = 0 \\
		  \end{array}
		  \right.
		\end{equation}
		{\nf (}e.g. $\tau = 0, \ b = - (5 / 11) \sigma, \ \sigma^3 \in i \R,\  \theta = 1${\nf)}, or
		
		\begin{equation}
		(5) \qquad
		       \omega = \theta \dfrac{((b - \tau)(2z^2 + 1)^2 + (\tau - \sigma))^2}{z^2 (z - i)^3 (z + i)^3} dz
		\end{equation}
		with
		\begin{equation} \label{eq:per_Dg2_1_(5)}
		  \left\{
		  \begin{aligned}
     		   &\Re\left(
     		     \theta  (b^2  (128 \sigma^2 - 32 \sigma \tau + 15 \tau ^2 - 111) 
     		    - 2 b (112 \sigma ^2 \tau - \sigma  \tau ^2 + \sigma - 112 \tau ) \right.\\
     		  & \left.
     		  \hspace{3cm} + 3 \sigma ^2 (37 \tau ^2 - 5)+32 \tau(\sigma  - 4 \tau))
     		  \right) = 0, \\ 
     		 &\Im \left(
     		  \theta  (b^2 (128 \sigma ^2 - 32 \sigma \tau + 15 \tau ^2 + 111) 
     		  + 2 b (-112 \sigma ^2 \tau +  \sigma  \tau ^2 + \sigma - 112 \tau )) \right.\\
     		& \left.
     		\hspace{3cm} + 3 \sigma ^2 (37 \tau ^2 + 5) - 32 \tau(\sigma  + 4 \tau))
     		\right) = 0, \\
      		&\Re \left(
      		    \theta  (b^2 (112 \sigma - \tau ) + 2 b (8 \sigma ^2 - 127 \sigma  \tau + 8 \tau ^2)
      		   - \sigma \tau  (\sigma - 112 \tau )) 
      		   \right) = 0 \\
		  \end{aligned}
		  \right.
		\end{equation}
		{\nf(}e.g. $\tau = 0, \ b = - (1/7) \sigma , \ \sigma = \rt{13 / 2}, \ \theta = 1${\nf)} or

		\begin{equation} 
		    (8) \qquad 
		    \omega = \theta \dfrac{((b - \tau)(2z^2 + 1)^2 + (\tau - \sigma))^2}{z^3 (z - i)^2 (z + i)^2} dz
		\end{equation}
		with
		\begin{equation} \label{eq:per_Dg2_1_(8)}
		  \left\{
		    \begin{array}{l}
    			\Im\left(
    			  \theta  (b - \sigma ) (b (4 \sigma \tau - \tau^2 - 3) - \sigma (3 \tau^2 + 1) + 4 \tau)
    			  \right) = 0, \\
    			  \Re \left(\theta  (b - \sigma) (b (4 \sigma \tau - \tau^2 + 3) - \sigma(3 \tau^2 - 1)- 4 \tau)
    			  \right) = 0, \\
    			  \Im\left(
    			  \theta  (b-\sigma ) (b (2 \sigma +\tau ) -\tau  (\sigma + 2 \tau ))
    			  \right) = 0 
		    \end{array}
		  \right.
		  \end{equation}
		{\nf(}e.g. $\tau = 0, \ b = - (1/3)\sigma, \ \sigma^3 \in \R, \ \theta = 1${\nf)} or
		
		\noindent
		{\bf Case 4} 
		\begin{equation}
		  g =  \frac{\sigma (b - \tau)(z^2 - 1)^2 + 4 b (\sigma - \tau) z^2}
    				{(b - \tau)(z^2 - 1)^2 + 4 (\sigma - \tau) z^2}
		\end{equation}
		and the holomorphic $1$-form $\omega$ on $\Sigma$ meromorphic on $\Chat$ 
		is given by either
		\begin{equation} 
		 (5) \qquad
		   \omega = \theta \dfrac{((b - \tau)(z^2 - 1)^2 + 4 (\sigma - \tau) z^2)^2}
    				{z^2 (z - i)^3 (z + i)^3} dz
		\end{equation}
		with
		\begin{equation} \label{eq:per_Dg2_4_(5)}
		  \left\{
		    \begin{array}{l}
		      	\Re \left(
		      	  \theta (b - \sigma ) (b (4 \sigma\tau - \tau^2 - 3) - \sigma (3 \tau^2 + 1) + 4 \tau )
		      	 \right) = 0,\\
			\Im \left(
			 \theta (b - \sigma) (b (4 \sigma \tau - \tau^2 + 3) - 3 \sigma \tau^2 + \sigma - 4 \tau )
			 \right) = 0, \\ 
			 \Re \left(
			   \theta (b - \sigma) (b (2 \sigma + \tau ) - \tau (\sigma + 2 \tau ))
			  \right) = 0
		    \end{array}
		  \right.
		\end{equation}
		{\nf(}e.g. $\tau = 0, \ b = - (1 / 3) \sigma, \ \sigma^3 \in i\R, \ \theta = 1${\nf)}. 
	
  \end{theorem}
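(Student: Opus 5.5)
The plan is to reduce everything to Lemma \ref{lem:construction_G}, then classify the admissible $\omega$ through the regularity and completeness conditions, and finally impose the period condition. Let $f$ be such an immersion, with $g$ normalized as in the beginning of this section. The discussion preceding Lemma \ref{lem:structure_of_source} shows $R_g=1$ and $\nu(b)=2$. With $\Phi$ as in \eqref{eq:Mobius_2}, $G=\Phi\circ g$ satisfies the hypotheses of Lemma \ref{lem:construction_G}. Hence $G$ is \eqref{eq:G_1-1-1} or \eqref{eq:G_4-2} on $\Chat\setminus\{\infty,\pm i,0\}$, or \eqref{eq:G_2-1-1} on $\Chat\setminus\{\infty,\pm i,-17i\}$. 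Inverting, $g=\Phi^{-1}\circ G=(b(\tau-\sigma)G-\sigma(\tau-b))/((\tau-\sigma)G-(\tau-b))$. Substituting $G=1/(2z^2+1)^2$ and $G=-4z^2/(z^2-1)^2$ and clearing denominators gives the two stated formulas for $g$ in Case 1 and Case 4.

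Next I determine $\omega$. Write $g=A/Q$ with $\deg A,\deg Q\le 4$. Here $Q$ is $(b-\tau)(2z^2+1)^2+(\tau-\sigma)$ or its Case 4 analogue, and its zeros are the poles of $g$. Regularity of $ds^2=(1+|g|^2)^2|\omega|^2$ on $\Sigma$ forces $\omega$ to vanish to order exactly $2k$ at each pole of $g$ of order $k$, and to have no other zeros or poles on $\Sigma$. So $\omega=\theta\, Q^2\,dz/\prod_{e}(z-e)^{m_e}$ over the finite ends $e$. Completeness at each end requires the pole order of $\omega$ there to be at least $2$, since $g$ is finite at the ends. Because $\deg Q^2=8$ and $\omega$ must have a pole of order at least $2$ at $\infty$, we get $\sum m_e\ge 8$ with the $m_e\ge2$ at the three finite ends. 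A residue-free requirement kills any end where $\omega$ would have only a simple-type pole after multiplication by $g$, $g^2$. This produces a finite list of exponent patterns $(m_0,m_i,m_{-i})$, for example $(2,2,2)$, $(4,2,2)$, $(2,3,3)$, $(3,2,2)$, and so on. By the symmetry $z\mapsto -z$ and the symmetry exchanging $\pm i$, only a few patterns need be considered up to the listed ones (1), (2), (5), (8).

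For each pattern I impose the period condition \eqref{eq:PC}. Since $\Sigma$ is a punctured sphere, this reduces to
\[
\Res_{e}\bigl((1-g^2)\omega\bigr),\quad \Res_e\bigl(i(1+g^2)\omega\bigr),\quad \Res_e(2g\omega)
\]
having real parts summing to zero for loops around each end. This amounts to the vanishing of the real parts of the residues at each finite end. I would compute the residues of $\omega$, $g\omega$, $g^2\omega$ at $0,\pm i$ symbolically. Using the symmetries $z\mapsto -z$ and $z\mapsto\bar z$ of $G$, the residues at $i$ and $-i$ are related, which collapses the conditions to the three real equations displayed, e.g. \eqref{eq:per_Dg2_1_(1)}. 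The examples are checked by substitution.

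The main obstacle is Case 2, i.e. \eqref{eq:G_2-1-1} on $\Chat\setminus\{\infty,\pm i,-17i\}$, which does not appear in the statement. I must show that no choice of exponents solves the period problem there. I would first try to show that the residue equations force $\theta(b-\sigma)=0$ or a collision among $\sigma,\tau,b$. The same elimination argument is needed to discard the remaining exponent patterns in Cases 1 and 4. This is where the heavy (computer-assisted) residue computation lies.
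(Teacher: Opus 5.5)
You follow the paper's route: Lemma~\ref{lem:construction_G}, inversion of $\Phi$ (your formula for $g$ is correct), the list \eqref{eq:omega_form}, then residues. However, one step is wrong, and the part that makes the statement a classification is missing.

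\textbf{The period condition is misstated.} For a small loop $c$ around an end $p$ one has $\oint_c\al=2\pi i\,\Res(\al,p)$, so $\Re\oint_c\al=-2\pi\,\Im\Res(\al,p)$. The condition is therefore $\Res(\al,p)\in\R^3$: the imaginary parts vanish, not the real parts. Imposing $\Re\Res(\al,p)=0$ gives the system with $\Re$ and $\Im$ interchanged. Equivalently, it is the correct condition for $i\theta$ in place of $\theta$, i.e.\ for the conjugate surfaces. So you would not arrive at \eqref{eq:per_Dg2_1_(1)}--\eqref{eq:per_Dg2_4_(5)}. For instance, in Case 1 (1) with $\tau=0$, $b=-3\sigma/13$, $\theta=1$ you would get $\sigma^3\in\R$ instead of $\sigma^3\in i\R$.

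\textbf{The count of $\omega$'s is reversed.} $\omega$ has zeros of total order $2\deg g=8$ (at the poles of $g$) and $\deg(\omega)=-2$, so its total pole order is $10$. With order $\geq 2$ at each end this gives exactly the ten patterns, i.e.\ $6\leq\sum m_e\leq 8$, not $\sum m_e\geq 8$. No residue is required to vanish. Completeness alone only gives pole order $\geq 1$; what forces $\geq 2$ is that at a simple pole $\Res(\al,p)=\tfrac12 N(g(p))\Res(\omega,p)$ with $N(w)=(1-w^2,i(1+w^2),2w)$ isotropic, and a real isotropic vector is zero.

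\textbf{The exclusions are the genuine gap.} You must exclude all of Case 2, patterns (3), (6), (9) in Case 1, and (1), (3), (6) in Case 4. You only announce these, and ``trying to force $\theta(b-\sigma)=0$'' is not a mechanism. The missing idea, used by the paper, is the same isotropy obstruction. In every excluded pattern, some explicit combination of residues equals fixed constants times $(\tau^2-1)U$, $(\tau^2+1)U$, $\tau U$ with $U=\theta(b-\sigma)^2\neq 0$. Reality then gives \eqref{eq:Dg_2_period_no}, which forces $U=0$: a nonzero multiple of $N(\tau)$ cannot be real.

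Often this is a local computation. If $g-g(p)$ vanishes to order $k$ at an end $p$ where $\omega$ has pole order $\leq k$, then $\Res(\al,p)=\tfrac12N(g(p))\Res(\omega,p)$, so $\Res(\omega,p)$ would have to vanish. Two instances:
\begin{itemize}
\item Case 1 (3), at $z=0$, where $g-\tau=O(z^2)$: here $\Res(\omega,0)=2i\theta(b-\sigma)^2\neq 0$.
\item Case 2 (1), at $z=-i$, where $g-\tau=O((z+i)^3)$: here $\Res(\omega,-i)=128i\theta(b-\sigma)^2\neq 0$.
\end{itemize}
For patterns such as Case 1 (6), a combination of residues at two ends is needed.

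\textbf{The symmetry reduction only covers Case 1.} There only $z\mapsto -z$ matters. Conjugation is a symmetry of $G$ but not of $g$, whose coefficients involve $\sigma,\tau,b$, so it does not relate $\Res(\al,i)$ to $\Res(\al,-i)$. In Case 4 you also need $z\mapsto 1/z$ and the involution $\psi(z)=i(z+i)/(z-i)$. This map permutes the ends and satisfies $G\circ\psi=1-G$, so it exchanges $\sigma$ and $\tau$, and it carries pattern (8) to (5). Without it, Case 4 (8) survives as an extra family that is not in the statement.
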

  
  \begin{proof}
    By using the meromorphic function $G$ constructed in each case
    in Lemma \ref{lem:construction_G}, one can determine the form of 
    $g = \Phi^{-1} \circ G$, 
    where $\Phi$ is as in \eqref{eq:Mobius_2}.
    Also in general (cf. \cite{Osserman}), 
    by the regularity condition \eqref{eq:1stff}, 
    since $g$ has  $4$ simple poles at points in $\Chat \setminus  \{\infty, \pm i, t\}$, 
    then $\omega$ has zero points there of order $2$.
    Since $g$ has no pole at each end, $\omega$ has poles there of order greater 
    than or equal to $2$, and it is holomorphic at the other points in 
    $\Chat \setminus \{\infty, \pm i, t\}$.
    Thus, Riemann--Roch's theorem yields 
    $$
    	\sum_{p \in \omega^{-1} (\infty)} \ord_p \omega = -10, 
    $$
    where $\omega^{-1}(\infty)$ denotes the set of poles of $\omega$ in $\Chat$.
    Hence, if $g$ is written as $g = g_2 / g_1$, then, one can set $\omega$ as follows:
    \begin{equation}\label{eq:omega_form}
    \begin{aligned}
	  &(1) \ 
	    \omega = \theta \fr{g_1(z) ^2}{(z - t)^2 (z - i)^2 (z + i)^2}dz,
	  \quad  
	  &(2) \ 
	    \omega = \theta \fr{g_1(z) ^2}{(z - t)^4 (z - i)^2 (z + i)^2}dz, \\
	  &(3) \  
	    \omega = \theta \fr{g_1(z) ^2}{(z - t)^2 (z - i)^4 (z + i)^2}dz,
	  \quad 
	  &(4) \ 
	    \omega = \theta \fr{g_1(z) ^2}{(z - t)^2 (z - i)^2 (z + i)^4}dz, \\ 
	  &(5) \ 
	    \omega = \theta \fr{g_1(z) ^2}{(z - t)^2 (z - i)^3 (z + i)^3}dz, 
	  \quad 
	  &(6) \ 
	    \omega = \theta \fr{g_1(z) ^2}{(z - t)^3 (z - i)^3 (z + i)^2}dz, \\   
	  &(7) \ 
	   \omega = \theta \fr{g_1(z) ^2}{(z - t)^3 (z - i)^2 (z + i)^3}dz, 
	  \quad 
	  &(8) \ 
	    \omega = \theta \fr{g_1(z) ^2}{(z - t)^3 (z - i)^2 (z + i)^2}dz, \\ 
	  &(9) \ 
	    \omega = \theta \fr{g_1(z) ^2}{(z - t)^2 (z - i)^3 (z + i)^2}dz,  
	  \quad 
	  &(10) \ 
	    \omega = \theta \fr{g_1(z) ^2}{(z - t)^2 (z - i)^2 (z + i)^3}dz. \\ 
     \end{aligned}
     \end{equation}
       
     \noindent
     Here,  $g_1$ and $g_2$ are coprime polynomials of degree $4$, and $\theta \in \C \setminus \{0\}$.
    Hence, 
    we will deduce whether the pair $(g, \omega)$ in each case 
    satisfies the period condition \eqref{eq:PC}.
    Since the source Riemann surface is $\Chat \setminus \{\infty, \pm i, t\}$, 
    \eqref{eq:PC} is equivalent to 
    \begin{equation} \label{eq:period_zero}
      \Res(\al , p) \in \R
    \end{equation}
    for all $p \in \{\pm i, t\}$, 
    where the $\C^3$-valued meromorphic $1$-form $\al$ is defined by
    \begin{equation} \label{W-1form}
    	\al \coloneqq (\al_1, \al_2, \al_3) \coloneqq \fr{1}{2} (1 - g^2, i (1 + g^2), 2g) \omega,
    \end{equation}
    and $\Res(\al, p)$ denotes the residue of $\al$ at $p$.

  Let us first investigate {\bf Case 1}.
  \eqref{eq:G_1-1-1} and \eqref{eq:Mobius_2} give
  $$
    	g =  \dfrac{\sigma (b - \tau)(2z^2 + 1)^2 + b (\tau - \sigma)}
    	{(b - \tau)(2z^2 + 1)^2 + (\tau - \sigma)}.
   $$
  By considering a symmetry $z \mapsto -z$, 
  we only have to check the period problem other than 
  (4), (7), (10) in \eqref{eq:omega_form}.    

  \noindent
     	(1)   The meromorphic $1$-form $\omega$ is given by
  	$$
         \omega = \theta \dfrac{((b - \tau)(2z^2 + 1)^2 + (\tau - \sigma))^2}{z^2 (z - i)^2 (z + i)^2} dz,
	$$
  Then, we get
  \begin{align*}
	\Res(\al_1, i) &= \fr{i}{8}  \theta (b - \sigma) (b(16 \sigma \tau - 3 \tau^2 - 13) - \sigma (13 \tau^2 + 3) + 16\tau)
	= -\Res(\al_1, -i)\\
	\Res(\al_2, i) &= \fr{1}{8}  \theta (b - \sigma) (b(16 \sigma \tau - 3 \tau^2 + 13) - \sigma (13 \tau^2 - 3) - 16\tau)
	= -\Res(\al_2, -i)\\
	\Res(\al_3, i) &= - \fr{i}{4}  \theta (b - \sigma) (b(8\sigma + 5\tau) -  \tau(5 \sigma + 8\tau)) 
	= - \Res(\al_3, -i),\\
	\Res(\al, 0) &= \bm 0,
  \end{align*}
  where $\bm 0 \coloneqq (0,0,0)$.
  Hence, the period condition is equivalent to \eqref{eq:per_Dg2_1_(1)}, and 
  one can see that there exist $b, \sigma, \tau$, and $\theta$ that satisfy the condition.

  \noindent
 (2) 
 The meromorphic $1$-form $\omega$ is expressed as 
 $$
   \omega = \theta \dfrac{((b - \tau)(2z^2 + 1)^2 + (\tau - \sigma))^2}{z^4 (z - i)^2 (z + i)^2} dz.
 $$
 Then, 
 \begin{align*}
  \Res(\al_1, i) &= \fr{i}{8} \theta (b - \sigma) ( b(16 \sigma \tau - 5 \tau^2 - 11) - \sigma (11 \tau^2 + 5) + 16 \tau) 
			= -\Res(\al_1, -i),\\
  \Res(\al_2, i) &= \fr{1}{8}  \theta (b - \sigma) ( b(16 \sigma \tau - 5 \tau^2 + 11) - \sigma (11 \tau^2 - 5) - 16 \tau) 
  = -\Res(\al_2, -i),\\
  \Res(\al_3, i) &= - \fr{i}{4}  \theta (b - \sigma) (b(8\sigma + 3 \tau) - \tau(3 \sigma + 8\tau)) 
  = - \Res(\al_3, -i),\\
  \Res(\al, 0) &= \bm 0.
 \end{align*}
 Hence, the period condition is equivalent to \eqref{eq:per_Dg2_1_(2)}, and 
  one can see that there exist $b, \sigma, \tau$, and $\theta$ that satisfy the condition.
  
 \noindent
 (3) 
  The meromorphic $1$-form $\omega$ is expressed as 
  \begin{equation}
   \omega = \theta \dfrac{((b - \tau)(2z^2 + 1)^2 + (\tau - \sigma))^2}{z^2 (z - i)^4 (z + i)^2} dz.
  \end{equation}
 Then, 
 \begin{align*}
  \Res(\al_1, 0) &= - i \theta (b - \sigma)^2 (\tau^2 - 1),
  \qquad 
  \Res(\al_2, 0) = -\theta (b - \sigma)^2 (\tau^2 + 1), \\
  \Res(\al_3, 0) &= 2 i \theta (b - \sigma)^2 \tau.
 \end{align*}
 Then, the period condition \eqref{eq:period_zero} gives
       \begin{equation} \label{eq:Dg_2_period_no}
          (\tau^2 - 1) U \in i\R, \qquad 
          (\tau^2 + 1) U \in \R, \qquad 
          \tau U \in i \R,
       \end{equation}
       where $U \coloneqq \theta (b - \sigma)^2 \neq 0$.
       If $\tau ^2 + 1 \neq 0$, 
       then the first and second conditions in \eqref{eq:Dg_2_period_no} yield
	$$
		\fr{\tau^2 - 1}{\tau^2 + 1} \in i \R.
	$$
	This is equivalent to $|\tau| = 1$, and so we can set $\tau = e^{i \phi}\ (\phi \not \equiv \pi/2 \mod \pi)$.
	Since $\tau^2 + 1 = 2 e^{i\phi} \cos \phi$, the condition $ (\tau^2 + 1) U \in \R$ gives 
	$e^{i \phi} U \in \R$.
	On the other hand, the third condition means $e^{i\phi} U \in i \R$.
	Hence, it holds that $e^{i \phi} U = 0$, which contradicts $U \neq 0$.
	
	Also if $\tau^2 + 1 = 0$, then the first and third conditions in \eqref{eq:Dg_2_period_no} show that
	$U \in i \R \cap \R$, which implies $U = 0$
	and a contradiction.
	
 \noindent
 (5) 
  The meromorphic $1$-form $\omega$ is expressed as 
  \begin{equation}
    \omega = \theta \dfrac{((b - \tau)(2z^2 + 1)^2 + (\tau - \sigma))^2}{z^2 (z - i)^3 (z + i)^3} dz.
  \end{equation}
   Then, 
   \begin{align*}
     \Res(\al_1, i) & = 
     -\frac{i}{32}  \theta  (b^2  (128 \sigma^2 - 32 \sigma \tau + 15 \tau ^2 - 111) - 2 b (112 \sigma ^2
     \tau - \sigma  \tau ^2 + \sigma - 112 \tau )\\
     & \hspace{3cm} + 3 \sigma ^2 (37 \tau ^2 - 5)+32 \tau(\sigma  - 4 \tau))
     = -\Res(\al_1, -i ),\\
     \Res(\al_2, i) & = 
     -\frac{1}{32} \theta  (b^2 (128 \sigma ^2 - 32 \sigma \tau + 15 \tau ^2 + 111) - 2 b (112 \sigma ^2 \tau - 
     \sigma  \tau ^2 - \sigma + 112 \tau ))\\
     & \hspace{3cm} + 3 \sigma ^2 (37 \tau ^2 + 5) - 32 \tau(\sigma  + 4 \tau))
      = - \Res(\al_2,- i),\\
      \Res(\al_3, i) & = 
      \frac{i}{16}  \theta  (b^2 (112 \sigma - \tau ) + 2 b (8 \sigma ^2 - 127 \sigma  \tau + 8 \tau ^2)
       - \sigma \tau  (\sigma - 112 \tau ))\\
      &= - \Res(\al_3, -i), \qquad \Res(\al, 0) = \bm 0. 
   \end{align*}
   Hence, the period condition is equivalent to \eqref{eq:per_Dg2_1_(5)}, and 
   one can see that there exist $b, \sigma, \tau$, and $\theta$ that satisfy the condition.
   
  \noindent
  (6)
   The meromorphic $1$-form $\omega$ is expressed as 
  \begin{equation}
    \omega = \theta \dfrac{((b - \tau)(2z^2 + 1)^2 + (\tau - \sigma))^2}{z^3 (z - i)^3 (z + i)^2} dz.
  \end{equation}
  Then, we know that
  \begin{align*}
    32 \Res(\al_1, -i) + 8 \Res(\al_1, 0) &= 3 i \theta (b - \sigma)^2 ( \tau^2 - 1), \\
    32\Res(\al_2, -i) + 8 \Res(\al_2, 0) &= 3 \theta (b - \sigma)^2 (\tau^2 + 1), \\
    - 16 \Res(\al_3, -i) - 4 \Res(\al_3, 0) &= 3 i \theta (b- \sigma)^2 \tau.
  \end{align*}
  The period condition implies that these three values are all in $\R$, 
  which is a contradiction (see \eqref{eq:Dg_2_period_no}).

  \noindent
  (8)
   The meromorphic $1$-form $\omega$ is expressed as 
  \begin{equation}
    \omega = \theta \dfrac{((b - \tau)(2z^2 + 1)^2 + (\tau - \sigma))^2}{z^3 (z - i)^2 (z + i)^2} dz.
  \end{equation}
  Then, 
  \begin{align*}
    \Res(\al_1, i) & = 
    \frac{1}{2} \theta  (b - \sigma ) (b (4 \sigma \tau - \tau^2 - 3) - \sigma (3 \tau^2 + 1) + 4 \tau) \\
    &= \Res(\al_1, - i) = -2 \Res(\al_1, 0), \\
    \Res(\al_2, i) & = 
    -\frac{i}{2}  \theta  (b - \sigma) (b (4 \sigma \tau - \tau^2 + 3) - \sigma(3 \tau^2 - 1)- 4 \tau) \\
    & = \Res(\al_2, - i) = -2 \Res(\al_2, 0), \\
    \Res(\al_3, i) & = 
     -\theta  (b-\sigma ) (b (2 \sigma +\tau ) -\tau  (\sigma + 2 \tau )) 
     = \Res(\al_3, - i) = -2 \Res(\al_3, 0).
  \end{align*}
  Hence, the period condition is equivalent to \eqref{eq:per_Dg2_1_(8)}, and 
  one can see that there exist $b, \sigma, \tau$, and $\theta$ that satisfy the condition.
  
  \noindent
  (9)
   The meromorphic $1$-form $\omega$ is expressed as 
  \begin{equation}
    \omega = \theta \dfrac{((b - \tau)(2z^2 + 1)^2 + (\tau - \sigma))^2}{z^2 (z - i)^3 (z + i)^2} dz.
  \end{equation}
  Then, 
  \begin{align*}
    \Res(\al_1, 0) & = - \fr{1}{2}\theta (b - \sigma)^2 (\tau^2 - 1), 
    \quad 
    \Res(\al_2, 0) = \fr{i}{2} \theta (b - \sigma)^2 (\tau^2 + 1), \\
    \Res(\al_3, 0) & = \theta (b - \sigma)^2 \tau,
  \end{align*}
  which is impossible (see \eqref{eq:Dg_2_period_no}).
  
  Next, we consider {\bf Case 2}.
  By \eqref{eq:G_2-1-1} in Lemma \ref{lem:construction_G}, straightforward computations give that
  $$
    g =  \dfrac{\sigma (b - \tau)(z^2 + 10 i z + 23)^2 +  512 i b (\tau - \sigma)(z - i)}
    {(b - \tau)(z^2  + 10 i z + 23)^2 + 512 i (\tau - \sigma)(z - i)}.
  $$
  
  \noindent
  (1)
  We can set
  \begin{equation}
    \omega = \theta \dfrac{((b - \tau)(z^2  + 10 i z + 23)^2 + 512 i (\tau - \sigma)(z - i))^2}
    				{(z + 17i)^2 (z - i)^2 (z + i)^2} dz.
  \end{equation}
  By
  \begin{align*}
	\Res(\al_1, - i) &= - 64 i \theta (b - \sigma)^2 (\tau^2 - 1), \qquad
	\Res(\al_2,- i) = - 64 \theta (b - \sigma)^2 (\tau ^2 + 1),\\
	\Res(\al_3, - i) &= 128 i \theta \tau (b - \sigma )^2 ,	
  \end{align*}
  \eqref{eq:period_zero} yields a contradiction (see \eqref{eq:Dg_2_period_no}).
  
  Similarly, for the cases (2) through (10), by calculating $\Res(\al, p)$ 
  for $p \in \{\pm i, - 17 i\}$, one can verify that the period condition in each case
  reduces to \eqref{eq:Dg_2_period_no}, which is impossible.
  Thus, {\bf Case 2} does not occur.

    Lastly, let us discuss {\bf Case 4}.
   Taking symmetries $z \mapsto - z$ and $z \mapsto 1 / z$ into consideration 
   (if necessary, change roles of $\sigma$ and $\tau$).
   Also, by considering a coordinate change $z \mapsto i (z + i) / (z - i)$ that 
   fixes the set of ends, changing roles of $\sigma$ and $\tau$, and retaking $\theta$, 
   we see that the case (5) can be identified with (8), so we only consider (5).
   Thus, we only have to check the period problem 
   for the cases (1), (3), (5), (6) in \eqref{eq:omega_form}.
   By \eqref{eq:G_4-2} in Lemma \ref{lem:construction_G}, straightforward computations give that
   $$
     g =  \dfrac{\sigma (b - \tau)(z^2 - 1)^2 + 4 b (\sigma - \tau) z^2}
               	{(b - \tau)(z^2 - 1)^2 + 4 (\sigma - \tau) z^2}.
   $$
   The cases (1), (3), and (6) are impossible since these reduces \eqref{eq:Dg_2_period_no}.
  
  For the case (5), we can set
  \begin{equation}
     \omega = \theta \dfrac{((b - \tau)(z^2 - 1)^2 + 4 (\sigma - \tau) z^2)^2}
    				{z^2 (z - i)^3 (z + i)^3} dz,
  \end{equation}
  and obtain
  \begin{align*}
	\Res(\al_1, i) & = 
	-\frac{i}{2}  \theta  (b - \sigma ) (b (4 \sigma\tau - \tau^2 - 3) - \sigma (3 \tau^2 + 1) + 4 \tau )
	 = - \Res(\al_1, -i),\\
	\Res(\al_2, i) & =  
	-\frac{1}{2} \theta (b - \sigma) (b (4 \sigma \tau - \tau^2 + 3) - \sigma( 3\tau^2 - 1) - 4 \tau ) 
	 = - \Res(\al_2, -i),\\
	\Res(\al_3, i) & = i \theta (b - \sigma) (b (2 \sigma + \tau ) - \tau (\sigma + 2 \tau ))
	 = - \Res(\al_3, -i), \quad
	\Res(\al, 0) = \bm 0.
  \end{align*}
  In this case, One can see that there exist $b, \sigma, \tau$, and $\theta$ that satisfy the 
  period condition.

  \end{proof}

  \begin{figure}[h]
  \centering 
  \begin{tabular}{c@{\hspace{1cm}}c}
    \includegraphics[width = 55mm]{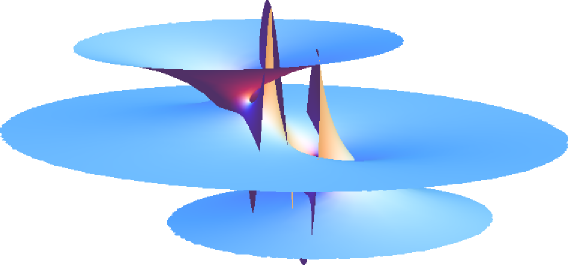} &
    \includegraphics[width = 50mm]{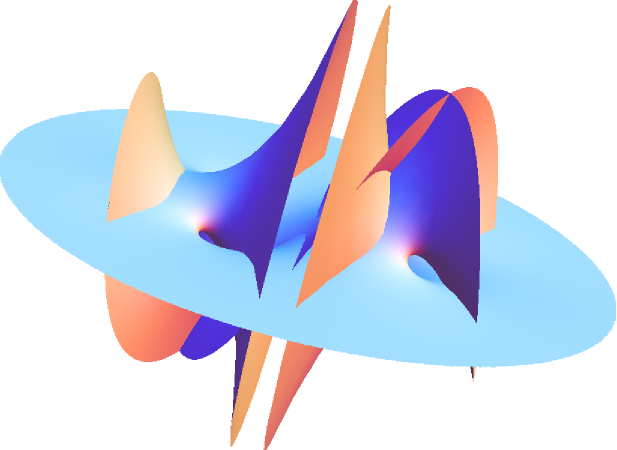} \\
    {\small (1)} & {\small (2)} \\
    \includegraphics[width = 55mm]{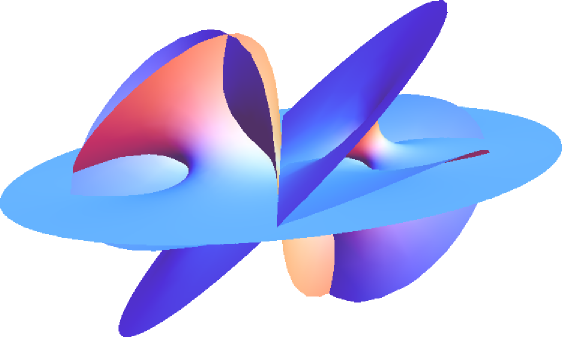} &
    \\   
     {\small (5)} & 
  \end{tabular}
  \caption{
    Complete minimal surface with $C(\Sigma) = - 16 \pi, D_g = 2$ and $\nu_g = 2.5$
    in {\bf Case 1} for each $\omega$ of (1), (2), and (5).
  }
  \label{fig:minimal_surfaces}
\end{figure}
  
  
  \begin{remark}
  We give a few remarks about Theorem \ref{thm:TC16Pi_25}.
  \begin{itemize}
  \item
    The family in (2) of {\bf Case 1} is a generalization of Kawakami--Watanabe's one. 
    In fact, by
    replacing $\tau$ with $\sigma$, $\sigma$ with $\sigma a\ (a \in \R \setminus \{1\}),$
    $b$ with $\sigma b \ (b \in \R \setminus \{a, 1\})$ to satisfy \eqref{eq:KW_period}, 
    and changing coordinate as $z \mapsto 1/ z$, 
    we obtain the family of minimal immersions in Example \ref{ex:KW}.
  \item
    In (8) of {\bf Case 1}, setting $w = z^2$, and in (5) of {\bf Case 4}, setting $w = z - 1 / z$, 
    respectively,
    we see that they are double coverings of complete minimal surfaces 
    of total curvature $- 8 \pi$ that have three catenoid ends, 
    which are given by L\' opez \cite[Theorem 3]{Lop92}.
    
  \end{itemize}
    
  \end{remark}
  
  \subsection{The case where $D_g \leq 1$}

 First, Corollary \ref{cor:surj_25} directly states the following assertion.
 \begin{proposition} \label{prop_16pi_Dg=0}
 There are no complete minimal immersions from $\Chat \setminus \{\text{$4$ points}\} $
 into $\R^3$ of total curvature $-16 \pi$ which satisfy $D_g = 0$ 
 (i.e., the Gauss map $g$ is surjective) and $\nu_g = 2.5$. 
 \end{proposition}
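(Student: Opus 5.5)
The plan is to argue directly on the punctured surface $\Sigma=\Chat\setminus E$ with $\# E=4$, because totally ramified values are defined through the fibres of $g$ \emph{in $\Sigma$}. Fibre points lying at the ends are allowed to be unbranched, so Corollary \ref{cor:surj_25} cannot be applied verbatim. Its counting idea must be redone, keeping track of the ends.

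\textbf{Setting up.} Suppose such an immersion exists. Since $C(\Sigma)=-16\pi$, the Gauss map $g$ extends to $g:\Chat\to\Chat$ with $d=\deg g=4$. By Riemann--Hurwitz, $B_g=\sum_{p\in\Chat}(e_p-1)=6$. Since $D_g=0$, every $\alpha\in\mathcal{TR}_g$ is some $b_j$ with $g^{-1}(b_j)\cap\Sigma\neq\varnothing$, and every point of $g^{-1}(b_j)\cap\Sigma$ has $e_p\ge 2$. Each weight $1-1/\nu(b_j)$ lies in $\{1/2,2/3,3/4\}$, because $\nu(b_j)\le d=4$. Hence $\nu_g\ge 2.5$ forces $R_g\ge 4$, since three values give at most $9/4$.

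\textbf{Branching cost of each $b_j$.} Let $F_j=g^{-1}(b_j)\subset\Chat$ be the full fibre. Its contribution to $B_g$ is $\beta_j:=\sum_{p\in F_j}(e_p-1)=4-\#F_j$. Let $k_j=\#(F_j\cap E)$. The points of $F_j\cap\Sigma$ all have multiplicity $\ge 2$, there is at least one of them, and the multiplicities in $F_j$ sum to $4$. This gives the following bounds.
\begin{itemize}
\item If $k_j\le 1$, then $\#F_j\le 2$. Indeed, with $k_j=0$ the fibre has at most two points of multiplicity $\ge2$. With $k_j=1$, a third point would have to be a branched point of $\Sigma$ or a second end, which is impossible. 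So $\beta_j\ge 2$.
\item If $k_j\ge 2$, we only get $\beta_j\ge 1$. Equality holds exactly when $F_j$ has pattern $2+1+1$ with the two simple points at ends, and then $\nu(b_j)=2$.
\end{itemize}
The fibres are disjoint and $\#E=4$, so at most two indices have $k_j\ge 2$.

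\textbf{Conclusion.} Summing the bounds gives
\[
6=B_g\ \ge\ \sum_j\beta_j\ \ge\ 2R_g-2 .
\]
This excludes $R_g\ge 5$. For $R_g=4$, every inequality must be an equality. The two values with $k_j\ge2$ then have pattern $2+1+1$, so weight $1/2$. The other two have $k_j\le1$ and $\beta_j=2$, so the Σ-points are branched and $\#F_j=2$. The pattern $3+1$ with the simple point at an end is excluded here, since it would require $k_j=1$ while both ends-rich fibres already use all four ends. Thus those fibres have pattern $2+2$, again of weight $1/2$. Hence $\nu_g=4\cdot\frac12=2<2.5$, a contradiction.

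\textbf{Main obstacle.} The delicate step is the case analysis for $\beta_j$ when ends lie in the fibre, together with the equality case $R_g=4$. There one must check that no fibre of weight $2/3$ or $3/4$ can occur at branching cost $\le2$ given the end budget. The check is: $3+1$ needs one end and $4$ costs $3$, and each of these breaks equality in $6\ge 2R_g-2$.
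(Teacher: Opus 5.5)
Your argument is correct, and it takes a different and more careful route than the paper. The paper proves this in one line by citing Corollary~\ref{cor:surj_25}, that is, the bound of Corollary~\ref{thm:nug_estimate} with $d=4$ and $B_g=6$, which gives $\nu_g\le\frac14\lfloor\frac43\cdot 6\rfloor=2$.

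That citation is short and uniform in $d\le 4$, but it rests on Lemma~\ref{lem:estimate_order_1}. The proof of that lemma uses $e_p\ge\nu(b)$ at every point of the full fibre $g^{-1}(b)\subset\BSigma$, which is needed to write $d=\sum_p e_p$. This is exactly what fails when the fibre passes through unbranched ends, the issue you flag.

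Without a bound on the number of punctures, the corollary is actually false. Take a generic degree-$4$ rational function, with six simple critical points and six distinct critical values. Puncture the twelve simple points of the six critical fibres. The restriction is then surjective, with $R_g=6$ and $\nu_g=3$.

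Your bookkeeping is what really makes the statement true for four punctures:
\begin{itemize}
\item a fibre meeting at most one end costs $\beta_j\ge 2$;
\item any other fibre costs $\beta_j\ge 1$;
\item since $\#E=4$, at most two fibres can be of the second kind.
\end{itemize}
The price is that the argument is tailored to $n=4$, $d=4$ rather than being a general estimate. A slight refinement gives a sharper result: weight $3/4$ costs $3$, and weight $2/3$ costs $2$ and uses one end. With this, you even get $\nu_g\le 2$ in this setting, which is the bound the paper's citation asserts.
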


 
 Let us consider the case where $D_g = 1$.
 The definition of $\nu_g$ gives 
 $$
 	\nu_g = 1 + \sum_{j = 1}^{R_g} \left (1 -\fr{1}{\nu_j}\right).
 $$ 
 If $R_g = 1$, then 
 $
 	\nu_g = 2 - 1 / \nu_1 < 2.5,
 $ 
 which is also a contradiction.
 Hence, by \eqref{eq:R_g3}, we know that $R_g = 2$ or $3$.
 
 We now assume $R_g = 2$. 
 Let $\mathcal{TR}_g = \{\sigma, b_1, b_2\}$ be the set of totally ramified values, where 
 $\sigma \in \Chat$ is the unique omitted value and $b_1, b_2 \in \Chat \setminus \{\sigma\}\ (b_1 \neq b_2)$
 are totally ramified values other than the omitted value.
 Then, $\nu_g = 2.5$ and \eqref{eq:nu2ijo} give $(\nu_1, \nu_2) = (4, 4)$.
 Let $\Phi_1 : \Chat \to \Chat$ be a M\"{o}bius transformation defined by 
 $$
 	\Phi_1(w) \coloneqq \fr{\sigma - b_2}{\sigma - b_1} \cdot \fr{w - b_1}{w - b_2},
 $$ 
 which satisfies $\Phi_1(\sigma) = 1, \Phi_1(b_1) = 0, \Phi_1(b_2) = \infty$,
 and let $G \coloneqq \Phi_1 \circ g$.
 Since $(\nu_1, \nu_2) = (4, 4)$ and $\deg(G) = 4$, 
 let $q_1, q_2 \in \Chat \setminus\{\infty, \pm i, t\}\ (q_1 \neq q_2)$ satisfy
 $$
 	g^{-1}(b_1) = G^{-1}(0) = \{q_1\}, \quad g^{-1}(b_2) = G^{-1}(\infty) = \{q_2\}, \quad \ord_{q_1} G = - \ord_{q_2} G = 4.
 $$
 Since $B_G = 6$, $\deg(G) = 4$, and the value $1$ is the unique omitted value of $G$, 
 we see that $G^{-1}(1) = \{\infty, \pm i, t\}$,
 and $G$ does not branch on $\Chat$ except $q_1$ and $q_2$.
 Thus, we can set 
 $$
 	G = \left(\fr{z - q_1}{z - q_2} \right)^4.
 $$ 
 
 Let us determine $q_1, q_2$ and $t$ from the condition $G(\pm i) = G(t) = 1$.
 Let $N(z) \coloneqq  - 4 (q_1 - q_2) z^3 + 6 (q_1^2 - q_2^2) z^2 - 4(q_1^3 - q_2^3) z +q_1^4 - q_2^4$ 
 be the numerator of $G - 1$.
 Since the polynomial $N$ has $\pm i$ and $t$ as the simple roots, Vieta's formula gives  
 \begin{align*}
 	i + (-i) + t &= t = \fr{3}{2} (q_1 + q_2), \qquad
 	i \cdot (-i) + (-i) \cdot t + t \cdot i = 1 = q_1^2 + q_1 q_2 + q_2^2,\\
 	i \cdot (-i) \cdot t &= t = \fr{1}{4} (q_1 + q_2) (q_1^2 + q_2^2).
 \end{align*}
 
 Assume that $q_1 + q_2 = 0$. 
 Then, we have 
 $$
   t = 0,
 $$ 
 and hence $q_1 = \pm 1$ and $q_2 = \mp 1$, double sign in same order.
 As a result, without loss of generality, we get 
 \begin{equation} \label{eq_Dg1_NO1}
 	G = \left(\fr{z - 1}{z + 1} \right)^4.
 \end{equation}
 On the other hand, if $q_1 + q_2 \neq 0$, then by direct computations, we have 
 (1) $t = 3 i$ and $\{q_1, q_2\} = \{i + 2, i - 2\}$ or (2) $t = - 3 i$ and $\{q_1, q_2\} = \{- i + 2, - i - 2\}$.
 In both cases, by coordinate change $z \mapsto (z + 3 i)/(i z + 1)$ and $z \mapsto -z$ in (1), and
 by $z \mapsto (- i z + 3)/(-z + i)$ and $z \mapsto -z$ in (2) respectively, we find that
 the constructed $G$ coincides with \eqref{eq_Dg1_NO1}. 
 
    By $g = G \circ \Phi_1^{-1}$, we can determine the form of $g$ as 
    \begin{equation}\label{eq:Gauss_Dg1}
    	g = \fr{b_2 (b_1 - \sigma) (z - 1)^4 + b_1 (\sigma - b_2) (z + 1)^4}
    		{(b_2 - \sigma)(z - 1)^4 + (b_1 - \sigma)(z + 1)^4}. 
    \end{equation}
    Thus, let us consider whether or not each $(g, \omega)$ on $\Chat \setminus \{\infty, \pm i, 0\}$ 
    satisfies the period condition \eqref{eq:period_zero} regarding $g$ in \eqref{eq:Gauss_Dg1}
     and $\omega$ in each case $(1)$ through $(10)$ in \eqref{eq:omega_form}.
    
     Taking symmetries $z \mapsto - z$ and $z \mapsto \pm 1 / z$ into consideration 
   (if necessary, change roles of $b_1$ and $b_2$), 
   we only have to check the period problem 
   for the cases (1), (3), (5), (6), and (8) in \eqref{eq:omega_form}.
   By considering a coordinate change $z \mapsto i (z + i) / (z - i)$ that 
   fixes the set of ends, changing roles of $b_1$ and $b_2$, and retaking $\theta$, 
   we see that the case (5) can be identified with (8), so we only consider (5). 
   

   \noindent
   (1) 
   One can set
   $$
      \omega = \theta \dfrac{((b_2 - \sigma)(z - 1)^4 + (b_1 - \sigma)(z + 1)^4)^2}{z^2 (z - i)^2 (z + i)^2}dz,
   $$
   The computations of residues of $\al$ at the end $z = \pm i$ yield
       $
         \Res(\al_1, i) - \Res(\al_1, -i) = 4 i \theta (b_1 - b_2)^2  (\sigma^2 - 1), \,
         \Res(\al_2, i) - \Res(\al_2, -i) = 4 \theta (b_1 - b_2)^2  (\sigma^2 + 1),\, 
	$
	and 
	$
         \Res(\al_3, i) - \Res(\al_3, -i) = -8 i \theta (b_1 - b_2)^2  \sigma,
       $
       and
       the period condition \eqref{eq:period_zero} gives
       \begin{equation} \label{eq:Dg_1_period}
          (\sigma^2 - 1) U \in i\R, \qquad 
          (\sigma^2 + 1) U \in \R, \qquad 
          \sigma U \in i \R,
       \end{equation}
       which give a contradiction (see \eqref{eq:Dg_2_period_no}).
   
   \noindent
   (3)
   One can set
   $$
      \omega = \theta \dfrac{((b_2 - \sigma)(z - 1)^4 + (b_1 - \sigma)(z + 1)^4)^2}{z^2 (z - i)^4 (z + i)^2}dz,
   $$
   Straightforward computations give
   \begin{align*}
     &4 \left(
       \fr{8}{5}\Res(\al_1, i) - \fr{2}{5} \Res(\al_2, i) + 4 \Res(\al_1, 0) - \Res(\al_2, 0)
     \right)\\
     & \qquad 
     +\left(
       \fr{2}{5}\Res(\al_1, i) + \fr{8}{5} \Res(\al_2, i) +  \Res(\al_1, 0) + 4 \Res(\al_2, 0)
     \right)
     = - 17 i(\sigma^2 - 1)U,\\
     &\left(
       \fr{8}{5}\Res(\al_1, i) - \fr{2}{5} \Res(\al_2, i) + 4 \Res(\al_1, 0) - \Res(\al_2, 0)
     \right)\\
     & \qquad 
     - 4 \left(
       \fr{2}{5}\Res(\al_1, i) + \fr{8}{5} \Res(\al_2, i) +  \Res(\al_1, 0) + 4 \Res(\al_2, 0)
     \right)
     = 17 (\sigma^2 + 1) U, \\
     &2 \Res(\al_3, i) + 5 \Res(\al_3, 0) = 10 i \sigma U,
   \end{align*}
   where $U \coloneqq \theta (b_1 - b_2)$,
   and the period condition leads to
   a contradiction (see \eqref{eq:Dg_2_period_no}).
   
   \noindent
   (5)
   One can set
   $$
      \omega = \theta \dfrac{((b_2 - \sigma)(z - 1)^4 + (b_1 - \sigma)(z + 1)^4)^2}{z^2 (z - i)^3 (z + i)^3}dz,
   $$
   One can compute residues of $\al$ at each end as 
      \begin{align*}
      	\Res(\al_1, i) 
      	& =
      	\fr{i}{2}\theta \left(
      	  32 \sigma (b_2 - \sigma) + 13 b_2^2 (\sigma^2 - 1) + b_1^2 (32 b_2^2 - 32 b_2 \sigma +13 (\sigma^2 - 1))
      	  \right.\\
      	  &\left.
      	  \hspace{3cm} + b_1 (32 \sigma - 32 b_2^2 \sigma + 6 b_2 (\sigma^2 - 1))
      	\right) = - \Res(\al_1, -i),\\
      	\Res(\al_2, i) 
      	& = 
      	- \fr{1}{2}\theta \left(
      	  32 \sigma (b_2 - \sigma) - 13 b_2^2 (\sigma^2 + 1) - b_1^2 (32 b_2^2 - 32 b_2 \sigma +13 (\sigma^2 + 1))
      	  \right.\\
      	  &\left.
      	  \hspace{3cm} + b_1 ( 32 \sigma + 32 b_2^2 \sigma - 6 b_2 (\sigma^2 + 1))
      	\right) = \Res(\al_2, -i),\\
      	\Res(\al_3, i)
      	&=
      	 - i \theta (b_1^2 (16 b_2 - 3\sigma) - b_2 \sigma(3b_2 - 16 \sigma) + 2b_1 (8 b_2^2 - 29 b_2 \sigma + 8 \sigma^2)) \\
      	 & = - \Res(\al_3,-i), \\
      	  \Res(\al_1, 0) 
      	  &= 
      	  4 \theta  (b_1 - b_2) (b_1 (2 b_2 \sigma -\sigma ^2 - 1 ) - b_2 (\sigma^2 + 1) + 2 \sigma ), \\
      	  \Res(\al_2, 0)
      	  &=
      	  -4 i \theta  (b_1 - b_2) (b_1(2 b_2 \sigma - \sigma ^2 + 1) -  b_2( \sigma^2 - 1) - 2 \sigma ), \\
      	  \Res(\al_3, 0)
      	  & = 
      	  -8 \theta  (b_1 - b_2) (b_1 b_2 - \sigma^2).
      \end{align*}
      One can check that there exist the distinct number $\sigma \in \C, b_1, b_2 \in \Chat,$ 
      and the number $\theta \in \C \setminus \{0\}$ satisfying the period condition
      (see Proposition \ref{prop_16pi_Dg=1}).
   
   \noindent
   (6)
   One can set
   $$
      \omega = \theta \dfrac{((b_2 - \sigma)(z - 1)^4 + (b_1 - \sigma)(z + 1)^4)^2}{z^3 (z - i)^3 (z + i)^2}dz,
   $$
   Then, one can see that 
   \begin{align*}
     &\Res(\al_1,i) + 2 \Res(\al_4, -i) + \Res(\al_7, 0)  = - i (\sigma^2 - 1)U, \\
     &\Res(\al_2, i) + 2 \Res(\al_5, -i) + \Res(\al_8, 0)  = -  (\sigma^2 + 1)U, \\
     &\Res(\al_3, i) + 2 \Res(\al_3, -i) + \Res(\al_3, 0) = 2 i \sigma U,
   \end{align*}
   where $U = \theta(b_1 - b_2)$, 
   and the period condition leads to a contradiction (see \eqref{eq:Dg_2_period_no}).
   Summarizing the arguments above, we obtain the following.

 \begin{proposition} \label{prop_16pi_Dg=1}
    A complete minimal immersion 
    $\Sigma \coloneqq \Chat \setminus \{\text{$4$ points}\} \to \R^3$ 
    of total curvature $-16 \pi$ with $D_g = 1, R_g = 2$ and $\nu_ g = 2.5$
    is given by the following Weierstrass data $(g, \omega)$ defined on $\Sigma = \Chat \setminus \{\infty, \pm i, 0\}$ :
    for distinct values $\sigma \in \C, b_1, b_2 \in \Chat$, 
    and the values $\theta \in \C \setminus\{0\}$,
    \begin{equation}
        g = \dfrac{b_2 (b_1 - \sigma) (z - 1)^4 + b_1 (\sigma - b_2) (z + 1)^4}
    		{(b_1 - \sigma)(z - 1)^4 +  (\sigma - b_2)(z + 1)^4},
    \end{equation}
    and the holomorphic $1$-form $\omega$ meromorphic on $\Chat$ 
    is given by 
         \begin{equation}
      	    (5) \qquad
      	    \omega = \theta \dfrac{((b_2 - \sigma)(z - 1)^4 + (b_1 - \sigma)(z + 1)^4)^2}{z^2 (z - i)^3 (z + i)^3}dz,
   	 \end{equation}
   	 with
     \begin{equation}
      \left\{
      \begin{array}{l}
      	 \Re\left(
      	 \theta (
      	  32 \sigma (b_2 - \sigma) + 13 b_2^2 (\sigma^2 - 1) + b_1^2 (32 b_2^2 - 32 b_2 \sigma +13 (\sigma^2 - 1))
      	  \right.\\
      	  \left.
      	  \hspace{5cm} + b_1 (32 \sigma - 32 b_2^2 \sigma + 6 b_2 (\sigma^2 - 1))
      	\right) = 0,\\
      	  \Im \left(
      	  \theta(
      	  32 \sigma (b_2 - \sigma) - 13 b_2^2 (\sigma^2 + 1) - b_1^2 (32 b_2^2 - 32 b_2 \sigma +13 (\sigma^2 + 1))
      	  \right.\\
      	  \left.
      	  \hspace{5cm} + b_1 ( 32 \sigma + 32 b_2^2 \sigma - 6 b_2 (\sigma^2 + 1))
      	\right) = 0,\\
      	  \Re\left(
      	  \theta (b_1^2 (16 b_2 - 3 \sigma) - b_2 \sigma(3b_2 - 16 \sigma) + 2b_1 (8 b_2^2 - 29 b_2 \sigma + 8 \sigma^2))
      	  \right) = 0, \\
      	  \Im\left(
      	  \theta  (b_1 - b_2) (b_1 (2 b_2 \sigma -\sigma ^2 - 1 ) - b_2 (\sigma^2 + 1) + 2 \sigma )
      	  \right) = 0, \\
      	  \Re\left(
      	  \theta  (b_1 - b_2) (b_1(2 b_2 \sigma - \sigma ^2 + 1) -  b_2( \sigma^2 - 1) - 2 \sigma )
      	  \right) = 0, \\
      	  \Im\left(
      	   \theta  (b_1 - b_2) (b_1 b_2 - \sigma^2)
      	   \right) = 0
        \end{array}
        \right.
       \end{equation}
       {\nf(}e.g. $\sigma = i, \ b_1 = - 4\rt{10} / 13 + 3 i / 13 , \ b_2 = - \bar{b_1}, \ \theta = 1${\nf)}
       {\nf(}Figure \ref{TRV25_1(5)_fig}{\nf)}.

 \end{proposition}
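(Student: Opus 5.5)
The argument assembles the case analysis carried out just before the statement into a classification. I would split it into a necessity part and an existence part.

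\emph{Necessity.} Let $f$ be such an immersion. By a rotation in $\R^3$ and a M\"obius change of coordinate on the source, we may assume that $\Sigma=\Chat\setminus\{\infty,\pm i,t\}$ and that $g$ has no poles at the ends. Since $D_g=1$ and $R_g=2$, the relation $\nu_g=2.5$ forces $(1-1/\nu_1)+(1-1/\nu_2)=3/2$. Combined with \eqref{eq:nu2ijo}, namely $\nu_1+\nu_2\le 8$ and $\nu_j\ge 2$, this leaves only $(\nu_1,\nu_2)=(4,4)$.

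Composing with $\Phi_1$ gives $G=\Phi_1\circ g$ of degree $4$, totally ramified over $0$ and $\infty$ at single points $q_1,q_2$, and omitting $1$. Since $B_G=6$ is already used up at $q_1,q_2$, the four ends are exactly the simple preimages of $1$. Vieta's relations for the numerator of $G-1$ then determine $q_1,q_2,t$. After the explicit coordinate changes, every possibility reduces to \eqref{eq_Dg1_NO1} on $\Chat\setminus\{\infty,\pm i,0\}$, and inverting $\Phi_1$ yields the stated form of $g$.

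Next, the regularity condition forces $\omega$ to vanish to order exactly $2$ at the four poles of $g$ and to have poles of order at least $2$ at each end. The pole orders of $\omega$ sum to $10$, so $\omega$ has one of the shapes (1)--(10) in \eqref{eq:omega_form} with $t=0$.

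The symmetries $z\mapsto -z$, $z\mapsto\pm1/z$ and $z\mapsto i(z+i)/(z-i)$ all preserve the set of ends. Combined with interchanging $b_1,b_2$ and rescaling $\theta$, they reduce the list to (1), (3), (5) and (6).

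For (1), (3) and (6), the residue combinations already computed give the condition \eqref{eq:Dg_1_period}. The argument following \eqref{eq:Dg_2_period_no} shows that this condition forces $U=\theta(b_1-b_2)=0$, a contradiction because $b_1\neq b_2$ and $\theta\neq0$. Hence only (5) survives.

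For (5), the period condition \eqref{eq:period_zero} must hold at $i$, $-i$ and $0$. From the residue formulas we have $\Res(\al_1,-i)=-\Res(\al_1,i)$, $\Res(\al_2,-i)=\Res(\al_2,i)$ and $\Res(\al_3,-i)=-\Res(\al_3,i)$, so the conditions at $-i$ duplicate those at $i$. The remaining three conditions at $i$ and three at $0$ are precisely the six displayed equations, with the constant factors such as $i/2$ and $4$ absorbed into taking real or imaginary parts.

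\emph{Existence.} Conversely, any data satisfying these six equations satisfies the period condition. Regularity holds by construction, and completeness holds because $\omega$ has poles of order at least $2$ at the ends while $g$ is finite there. The total curvature is $-4\pi\deg g=-16\pi$. Finally, $g$ omits $\sigma$ and is totally ramified of order $4$ at $b_1,b_2$, so $D_g=1$, $R_g=2$ and $\nu_g=1+3/4+3/4=2.5$.

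What remains is to verify that the sample point $\sigma=i$, $b_1=-4\sqrt{10}/13+3i/13$, $b_2=-\bar b_1$, $\theta=1$ satisfies all six equations. I expect this to be the main obstacle, because it is the only genuinely new computation.

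I would first exploit the symmetry of this choice. Here $b_1b_2=-|b_1|^2=-1$, since $|b_1|^2=(160+9)/169=1$, and $\sigma^2=-1$. So the last equation holds because $b_1b_2-\sigma^2=0$. The other five equations then become explicit polynomials in $b_1$ and $\bar b_1$. Substituting $\sigma=i$ and $b_2=-\bar b_1$, the conjugation symmetry $z\mapsto-\bar z$ of the data should make each bracket purely real or purely imaginary as required, and I would check this directly, invoking $|b_1|=1$ and the specific value of $\operatorname{Re} b_1$ where needed.
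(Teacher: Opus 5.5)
Your proposal is correct and follows the paper's own argument, the $D_g=1$, $R_g=2$ analysis preceding the proposition, essentially step for step; your sufficiency check and your plan for the sample point only make explicit what the paper leaves as ``one can check''. Two small remarks: ``regularity holds by construction'' needs the numerator of $\omega$ to be the square of $g$'s actual denominator $(b_1-\sigma)(z-1)^4+(\sigma-b_2)(z+1)^4$, which is the form the residue formulas are computed with (the numerator displayed in the statement appears to be a typo), and in the sample check the choice $\sigma=i$, $b_2=-\bar{b_1}$, $|b_1|=1$, $\theta=1$ makes five of the six equations hold automatically, while the first bracket becomes the real number $8(13\Im b_1-3)(\Im b_1-1)$, so it is exactly that equation which forces $\Im b_1=3/13$.
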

  
  \begin{figure}[h]
  \begin{center}
   \includegraphics[width = 50mm]{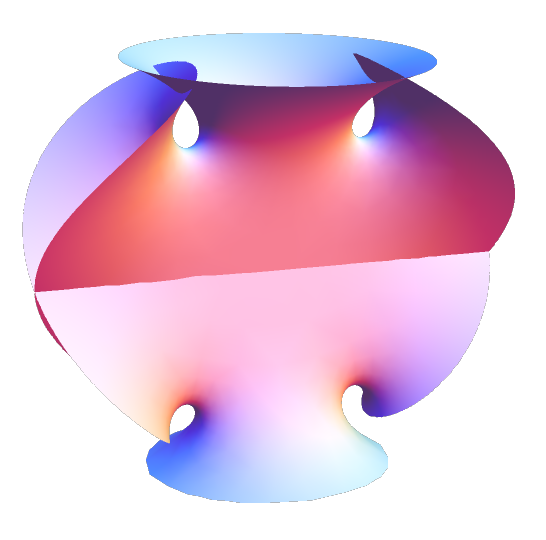}
   \end{center}
   \caption{
   Complete minimal surface with $C(\Sigma) = - 16 \pi, D_g = 1, R_g = 2$ and $\nu_g = 2.5$
   } \label{TRV25_1(5)_fig}
  \end{figure}
 
  \noindent 
  We do not know whether or not there is a complete minimal immersion
 $\Sigma = \Chat \setminus \{\text{$4$ points}\} \to\R^3$ with 
 $C(\Sigma) = -16 \pi, D_g = 1, R_g = 3,$ and $\nu_g = 2.5$.
 One can observe there is a meromorphic function $g : \Sigma \to \Chat$ satisfying such conditions, but
 do not know how to solve the period problem. 
  
  In the final part of this paper, 
  we suggest some open problems for the total weight $\nu_g$ of totally ramified values 
  the Gauss map $g$ of a complete minimal surface in $\R^3$ of finite total curvature.
  \begin{problem}\label{pro:open_TRV}
    \begin{enumerate}[label = (\arabic*), leftmargin = *, font = \normalfont]
      \item{\cite[Section 1]{KKM08}}
        Does there exist some $\kappa \in \R$ satisfying $2.5 \leq \kappa < 4$
        that is an upper bound for $\nu_g$?
      \item
        Does there exist a complete minimal immersion into $\R^3$ of finite total curvature satisfying $\nu_g > 2$, 
        which is of positive genus?
      \item
        What is a necessary and sufficient condition which attains the equality in \eqref{eq:TRBN_est}?
     \item
       What is the best upper bound of the number $D_g + R_g$ of totally ramified values
       for complete minimal immersions into $\R^3$ of finite total curvature?
    \end{enumerate}
  \end{problem}

\newcommand{\etalchar}[1]{$^{#1}$}

\end{document}